\documentclass[12pt,reqno]{amsart}
\usepackage{amsmath}
\usepackage{amsfonts}
\usepackage{amssymb}
\usepackage{amsthm}
\usepackage{stmaryrd}
\usepackage{mathrsfs}
\usepackage{mathtools}
\usepackage[makeroom]{cancel}
\usepackage{array}
\usepackage[shortlabels]{enumitem}
\usepackage{hyperref}
\usepackage[margin=1.0in]{geometry}
\usepackage[most]{tcolorbox}
\usepackage{xcolor}
\usepackage{color,soul}
\usepackage{tikz}
\usepackage[latin1]{inputenc}
\usetikzlibrary{
  arrows.meta,shapes,automata,petri,positioning,calc,
  graphs,
  graphs.standard
}
\usepackage{macros}

\begin{document}
\title[Mixed Tensor Products, Capelli Berezinians, and Newton's Formula for $\gl(m|n)$]{Mixed Tensor Products, Capelli Berezinians, and Newton's Formula for $\gl(m|n)$}

\subjclass[2020]{17B10, 17B35}
\keywords{Lie superalgebras, mixed tensor modules, Harish-Chandra homomorphism, Capelli identities, Yangians}

\author[S. Erat]{Sidarth Erat}
\address{Sidarth Erat \ La Jolla High School \ La Jolla, CA 92037}
\email{sidarth.erat@gmail.com}
\author[A. S. Kannan]{Arun S. Kannan}
\address{Arun S. Kannan \ Chicago, IL 60606}
\email{arun\_kannan@alum.mit.edu}
\author[S. Kanungo]{Shihan Kanungo}
\address{Shihan Kanungo \ Henry M. Gunn High School \ Palo Alto, CA 94306}
\email{shihankanungo@gmail.com} 
	
\begin{abstract} 
    In this paper, we extend the results of Grantcharov and Robitaille in 2021 on mixed tensor products and Capelli determinants to the superalgebra setting. Specifically, we construct a family of superalgebra homomorphisms $\varphi_R : U(\mathfrak{gl}(m+1|n)) \rightarrow \mathcal{D}'(m|n) \otimes U(\mathfrak{gl}(m|n))$ for a certain space of differential operators $\mathcal{D}'(m|n)$ indexed by a central element $R$ of $\mathcal{D}'(m|n) \otimes U(\mathfrak{gl}(m|n))$. We then use this homomorphism to determine the image of Gelfand generators of the center of $U(\mathfrak{gl}(m+1|n))$. We achieve this by first relating $\varphi_R$ to the corresponding Harish-Chandra homomorphisms and then proving a super-analog of Newton's formula for $\mathfrak{gl}(m)$ relating Capelli generators and Gelfand generators. We also use the homomorphism $\varphi_R$ to obtain representations of $U(\mathfrak{gl}(m+1|n))$ from those of $U(\mathfrak{gl}(m|n))$, and find conditions under which these inflations are simple. Finally, we show that for a distinguished central element $R_1$ in $\mathcal{D}'(m|n)\otimes U(\mathfrak{gl}(m|n))$, the kernel of $\varphi_{R_1}$ is the ideal of $U(\mathfrak{gl}(m+1|n))$ generated by the first Gelfand invariant $G_1$. 
\end{abstract}

\maketitle
\setcounter{tocdepth}{1}
\tableofcontents

\section{Introduction}
\label{sec: intro}

\subsection{The Lie algebra setting} Mixed-tensor type modules, also known as tensor modules or modules of Shen and Larson (see \cite{shen86, larsson92}), are modules over the tensor product algebra $\mathcal{D}'(m) \otimes U(\mathfrak{gl}(m))$, where $\mathcal{D}'(m)$ is a certain algebra of differential operators on the ring $\mathbb{C}[t_0^{\pm 1}, t_1, \dots, t_m]$ and $U(\mathfrak{gl}(m))$ is the universal enveloping algebra of the general linear Lie algebra $\mathfrak{gl}(m)$. Questions related to mixed-tensor type modules have been well studied over the past 35 years (see \cite{shen86, larsson92, rao96, rao04, BilligFutorny16, tz17, he2020class, GS_2021, GR_2021} and references therein). Of interest to us is an algebra homomorphism 
\[U(\mathfrak{sl}(m+1)) \rightarrow \mathcal{D}'(m) \otimes U(\mathfrak{gl}(m))\]
which enables one to inflate representations from $U(\mathfrak{gl}(m))$ to $U(\mathfrak{sl}(m+1))$ (see \cite{GR_2021, GS_2010}). Although this map can be written down algebraically in a straightforward way, there exists a natural geometric interpretation. In particular, it can be thought of as a form of Beilinson-Bernstein localization for parabolic subgroups using a certain maximal parabolic subgroup $P \subset SL(m+1)$ such that $GL(m) \subset P$ and such that $SL(m+1)/P \cong \mathbb{P}^{m}$ (see \cite {GS_2010} and references therein for more details).

In \cite{GR_2021}, the homomorphism $U(\mathfrak{sl}(m+1)) \rightarrow \mathcal{D}'(m) \otimes U(\mathfrak{gl}(m))$ is extended to $U(\mathfrak{gl}(m+1))$. Moreover, the authors explicitly determine the kernel of this extension as well as the image of the Gelfand generators of the center $Z(U(\mathfrak{gl}(m)))$ of $U(\mathfrak{gl}(m))$. Key ingredients in determining the image of the center are Capelli determinants, Capelli generators, and Newton's formula (for all of these, see \cite{Umeda_1998} and \cite{Molev_2007}). Along the way, numerous interesting algebraic identities are also proven; it is expected that these will have implications for the representation theory of tensor modules.

Additionally and importantly to us, the notion of mixed-tensor type modules can be generalized to the setting of Lie superalgebras. Moreover the study of tensor modules in the super setting has received much interest recently (see \cite{BFIK20, LX21, XW23, LX23} and references therein). This setting is the focus of this paper. In particular, we generalize the results of \cite{GR_2021} to the super setting for the general Lie superalgebra $\mathfrak{gl}(m+1|n)$. Most of the arguments therein generalize, although there are some subtleties that arise from the super setting.

\subsection{Results of this paper}
First, we construct a superalgebra of differential operators $\mathcal{D}'(m|n)$ that preserve degree on the superalgebra $\mathbb{C}[t_0^{\pm 1}, t_1, \dots, t_m] \otimes \Lambda_n$, where $\Lambda_n$ is the exterior algebra on $n$ generators with \textit{odd parity.} In Theorem \ref{thm: homomorphism}, we define via explicit formulas a family of superalgebra homomorphisms 
\[\varphi_R: U(\mathfrak{gl}(m+1|n)) \rightarrow \mathcal{D}'(m|n) \otimes U(\mathfrak{gl}(m|n))\] 
indexed by central elements $R$ of $\mathcal{D}'(m|n) \otimes U(\mathfrak{gl}(m|n))$. Moreover, for $m+1 \neq n$ there is a distinguished homomorphism $\varphi_{R_1}$ for a particular central element $R_1$. The map $\varphi_{R_1}$ is an direct generalization to the superalgebra context of the map $\rho$ in \cite{GR_2021} and is ``compatible" with a certain projection homomorphism $\pi^g: U(\mathfrak{gl}(m+1|n)) \rightarrow U(\mathfrak{sl}(m+1|n))$ and the canonical inclusion in the opposite direction (see Proposition \ref{prop: commutative diagram}). 

Using the homomorphism $\varphi_R$, we can consider inflating irreducible representations of $U(\mathfrak{gl}(m|n))$ to $U(\mathfrak{gl}(m+1|n))$ by tensoring with a highest-weight module $\mathcal{F}_a$ over $\mathcal{D}'(m|n)$ generated by $t_0^a$ (such a module can be defined for any $a \in \mathbb{C}$) and pulling back via $\varphi_R$. A partial criteria for when such representations are irreducible is the combined content of Theorems \ref{thm: when the highest weight vector generates the module} and \ref{thm: inflate}. 

We then take a closer look at the restriction of $\varphi_R$ to the center of $U(\mathfrak{gl}(m+1|n))$. In particular, we show that $\varphi_R$ interacts nicely with the Harish-Chandra homomorphism in Theorem \ref{thm : HC homomorphism} and compute the image of the Gelfand generators $G_k^{\mathfrak{gl}(m+1|n)}$ of the center in Theorem \ref{thm : gelfand}. To do the latter, we use the theory of Yangians. In particular, we define what is meant by a Capelli generator in the super setting (originally due to \cite{Nazarov_1991}) and then in Theorem \ref{thm : super newton} we reprove a super Newton's formula (analogous to the formula for $\mathfrak{gl}(m)$ in \cite{Umeda_1998})\footnote{One of the referees pointed out to that generalizations of this formula have been proven several times in the literature, such as in \cite{isaev1999quantum}, \cite{gurevich2004cayley}, \cite{gurevich2006quantum}, and \cite{gurevich2009spectral}, but we reprove it here for completeness.}, which relates the Capelli generators to the Gelfand generators using what we call a \textit{Capelli Berezinian}. Then, by computing the images of the Capelli Berezinian under $\varphi_R$ and applying the super Newton's formula, one can compute the images of Gelfand generators.  An alternative, more computational approach is given in Appendix \ref{long computations}.

Finally, we show that the kernel of $\varphi_{R_1}$ is generated by the first Gelfand invariant $G_1$ in Theorem \ref{thm : kernel}. All our results, except Theorems \ref{thm: when the highest weight vector generates the module} and \ref{thm: inflate}, specialize to those of \cite{GR_2021} when $n = 0$.

\subsection{Organization of this paper} In Section ~\ref{prelims} we review some basics about general linear Lie superalgebras and establish some notation. In Section ~\ref{sec: homomorphism} we construct the homomorphism $\varphi_R$ for $R \in Z(\mathcal{D}'(m|n) \otimes U(\mathfrak{gl}(m|n)))$. In Section~\ref{sec: inflating}, we discuss inflations of representations via the homomorphism $\varphi$.  In Section \ref{sec: harish chandra}, we relate the restriction of $\varphi$ to the center of $U(\mathfrak{gl}(m|n))$ to the corresponding Harish-Chandra homomorphisms. In Section \ref{sec: capelli berezinians}, we reprove the super Newton's formula and determine the images of the Gelfand generators under $\varphi$. Finally, in Section \ref{sec: kernel}, we derive the kernel of the map $\varphi$, a partial generalization of the results in Section 6 of \cite{GR_2021}.

The proof that $\varphi_R$ is a homomorphism is given in Appendix \ref{proof}, which is a long but straightforward computation. Appendix \ref{long computations} gives some explicit formulas for the images under $\varphi$ of a set of homogeneous elements of $U(\mathfrak{gl}(m+1|n))$, using a direct computational approach. All the Gelfand generators can be written as sums of elements from this set, so we obtain an alternative proof of Theorem \ref{thm : gelfand} that does not rely on other sections.

\subsection{Future Directions}
Let us discuss some further directions of inquiry. First of all, unlike the ordinary setting, we do not know of a geometric interpretation of the map $\varphi$ but we believe that the restriction of this map to $U(\mathfrak{sl}(m+1|n))$ should arise from considering a certain ``parabolic subgroup'' $\mathbf{P} \subseteq SL(m+1|n)$ so that the quotient is projective superspace $\mathbb{P}^{m|n}$. In particular, $\mathbf{P}$ should be described by the Harish-Chandra pair  $(\mathbf{P}_0 , \mathfrak{p})$, where the underlying even group $\mathbf{P}_0 \subset \mathbf{P}$ is given by $P$ as above, and the Lie superalgebra $\mathfrak{p}$ of $\mathbf{P}$ satisfies $\mathfrak{p} \cong V \oplus \mathfrak{gl}(m|n)$ where $V$ is the tautological representation of $\mathfrak{gl}(m|n)$ (see \cite{masuoka2012harish, SHIBATA2020179} for theory of Harish-Chandra pairs for supergroups).

Another question is whether there is some larger structure to the family of maps $\varphi_R$. In particular, is there an algebra structure on this set? If so, is there any representation-theoretic interpretation? 

Finally, in \cite{GR_2021} a family of left pseudoinverses are constructed for $\varphi$. It would be interesting to see if something analogous can be done in our setting.

\subsection*{Acknowledgments}
This paper is the result of MIT PRIMES-USA, a program that provides high-school students an opportunity to engage in research-level mathematics and in which the second author mentored the first and third authors. The authors would like to thank the MIT PRIMES-USA program and its coordinators for providing the opportunity for this research experience. We would also like to thank Dimitar Grantcharov for suggesting the project idea and for useful discussions. Additionally, we thank Pavel Etingof, Siddhartha Sahi, Tanya Khovanova, and Thomas R{\"u}d for their thoughts and comments. Finally, we thank the two anonymous referees for their feedback during the review process.

\section{Preliminaries}\label{prelims}
We shall introduce some background for the Lie superalgebra $\mathfrak{gl}(m|n)$ in this section. Throughout the paper we fix nonnegative integers $m,n$. 

\subsection{Basic definitions}

For a Lie superalgebra $\mathfrak{a}$, by $U(\mathfrak{a})$ we denote the universal enveloping superalgebra of $\mathfrak{a}$ and by $Z(\mathfrak{a})$ the center of a superalgebra $\mathfrak{a}$. We will always use $|\cdot|$ to denote the parity of a purely homogeneous element, which is $0$ if even and $1$ if odd (viewed as elements in $\mathbb{Z}/2\mathbb{Z}$). We will work with general linear Lie superalgebra $\mathfrak{gl}(m|n)$, which we define as follows. Let 
\[I \coloneqq \{1,\ldots, m,\bar{1},\ldots,\bar{n}\},\]
\[\hat{I} \coloneqq \{0\}\cup I \]
where we impose the total order 
\[
0<1 < \cdots < m < \bar{1} < \cdots < \bar{n}.
\]
We also define a parity function $| \cdot |$ on $\hat{I}$, where the parity of indices without an overline is $0$ and that of indices with an overline is $1$. Now, let $\mathbb{C}^{m+1|n}$ denote the super vector space with basis $\{e_0, e_1,\dots, e_m, e_{\bar{1}}, \dots, e_{\bar{n}}\}$, where the parity of a basis vector is given by the parity of its index. The general linear Lie superalgebra $\mathfrak{gl}(m+1|n)$ is by definition the space of all linear maps on $\mathbb{C}^{m+1|n}$ and can be thought of as super matrices of the block form  

\begin{equation}\label{eq : blockmatrix }
    \begin{bmatrix}
    \begin{array}{c|c}
    A & B \\
    \hline
    C & D
    \end{array}
    \end{bmatrix},  
\end{equation}
where the matrix $A$ is $(m+1) \times (m+1)$, the matrix $B$ is $(m+1) \times n$, the matrix $C$ is $n \times (m+1)$, and the matrix $D$ is $n \times n$. The underlying even Lie algebra is $\mathfrak{gl}(m+1) \oplus \mathfrak{gl}(n)$ and corresponds to matrices with $B = 0$ and $C = 0$, and the odd part corresponds to matrices such that $A$ and $D$ are zero. The Lie bracket on purely homogeneous elements $x, y \in \mathfrak{gl}(m+1|n)$ is given by 
\[ [x,y] \coloneqq xy - (-1)^{|x||y|}yx\]
where the multiplication is usual matrix multiplication. The \textit{supertrace} $\mathrm{str}$ of a matrix of the form in \eqref{eq : blockmatrix }is defined to be  
\[ \mathrm{str} \left(\begin{bmatrix}
\begin{array}{c|c}
A & B \\
\hline
C & D
\end{array}
\end{bmatrix} \right) = \mathrm{tr}(A) - \mathrm{tr}(D),\]
where $\mathrm{tr}$ denotes the usual trace of a matrix. The supertrace is a Lie superalgebra homomorphism, so by $\mathfrak{sl}(m+1|n)$ we will denote the special linear Lie superalgebra whose elements are the kernel of $\mathrm{str}$. Our standard basis for $\mathfrak{gl}(m+1|n)$ will be given by the usual elementary matrices $\{e_{ij}\}_{i,j \in \hat{I}}$. Finally, \textbf{we will always view $\mathfrak{gl}(m|n)$  as the Lie sub-superalgebra of $\mathfrak{gl}(m+1|n)$ spanned by  $\{e_{ij}\}_{i,j \in I}$}. We emphasize this point as the reader may be confused by the notation in what is to come if otherwise unaware.
\par
A basis for $\mathfrak{sl}(m+1|n)$ is 
\[\{e_{ij}: i,j\in \hat{I}, i\ne j\}\cup \{e_{00}-(-1)^{|i|}e_{ii}: i\in I\}\]
and we shall view $\mathfrak{sl}(m|n)$ as the elements $\mathfrak{sl}(m+1|n)$ that also lie in $\mathfrak{gl}(m|n)$ under the inclusion above. Some other important subalgebra for $\mathfrak{gl}(m+1|n)$ will be $\mathfrak{h}_{m+1|n}$, the Cartan subalgebra of diagonal matrices in $\mathfrak{gl}(m+1|n)$, and $\mathfrak{n}^{-}_{m+1|n}$ and $\mathfrak{n}^+_{m+1|n}$, the subalgebras of strictly lower and upper triangular matrices in $\mathfrak{gl}(m+1|n)$, respectively. We define the analogs of these subalgebras for $\mathfrak{gl}(m|n)$ in the obvious way.
\par
For a square matrix $A$ and a variable or constant $v$, the expression $A+v$ should be understood as the sum of $A$ and the scalar matrix of the same size as $A$ having $v$ on the diagonal. Finally, let $\delta_{kl}$ denote the Kronecker delta, which evaluates to $1$ if $k=l$ and $0$ otherwise.

In Section~\ref{sec: homomorphism}, $a$ and $b$ denote elements of $I$. In all sections, $i$ and $j$ denote elements of $I$ unless specified otherwise.
\subsection{Weights and Root Systems}\label{sec: prelims}

Let $\{\delta_0, \delta_1,\ldots,\delta_m,\delta_{\bar{1}},\ldots, \delta_{\bar{n}}\}$ denote the basis of $\mathfrak{h}_{m+1|n}^*$ dual to the canonical basis $\{e_{00}, e_{11},\ldots, e_{mm},e_{\bar{1}\bar{1}},\ldots, e_{\bar{n}\bar{n}}\}$ of $\mathfrak{h}_{m+1|n}^*$. Whenever convenient, for $1 \leq i \leq n$, we will write 
\[ \varepsilon_i:=\delta_{ \bar{i}}.\]
We will refer to elements of $\mathfrak{h}_{m+1|n}^*$ as \textit{weights}, and will often write any weight $\lambda = \sum_{i \in \hat{I}} \lambda_i \delta_i$ as a tuple $(\lambda_0,\lambda_1, \dots, \lambda_m, \lambda_{\bar{1}}, \dots, \lambda_{\bar{n}})$. The bilinear form $(\cdot, \cdot): \mathfrak{h}_{m+1|n} \times \mathfrak{h}_{m+1|n} \longrightarrow \mathbb{C}$ given by the supertrace $(x, y) \coloneqq \mathrm{str}(xy)$ naturally induces a bilinear form on $\mathfrak{h}_{m+1|n}^*$, which will also be denoted by $(\cdot, \cdot)$. For $i,j\in \hat{I}$, we have
\[
(\delta_i, \delta_j) = (-1)^{|i||j|}\delta_{ij}.
\]
We call $x \in \mathfrak{gl}(m+1|n)$ a \textit{root vector} if there exists a nonzero $\alpha \in \mathfrak{h}_{m+1|n}^*$ if for all $h \in \mathfrak{h}_{m+1|n}$ we have $[h, x] = \alpha(h)x$. The weight $\alpha$ is called a \textit{root}, and set of all roots is called the \textit{root system}. It is well known that a root system for $\mathfrak{gl}(m+1|n)$ is given by $\Phi \coloneqq \{\delta_i - \delta_j\}_{i \neq j \in \hat{I}}$, with a root vector corresponding to $\delta_i - \delta_j$ being $e_{ij}$. Let $\Phi_{\bar{0}}$ and $\Phi_{\bar{1}}$ be the even and odd roots in $\Phi$, respectively, where a root is even if the corresponding root vector is even (resp. odd). It is easily seen that 
\[{\Phi}_{\bar{0}} = \{\delta_i - \delta_j \}_{0 \leq i \neq j \leq m}\cup \{\varepsilon_i - \varepsilon_j \}_{1 \leq i \neq j \leq n},\] 
\[{\Phi}_{\overline{1}} = \{\pm(\delta_i - \varepsilon_j)\}_{0 \leq i \leq m, 1 \leq j \leq n}.\] 
A root $\alpha \in \Phi$ is said to be \textit{isotropic} if $(\alpha, \alpha) = 0$. The set of odd roots coincides with the set of isotropic roots for the general linear Lie superalgebras. Let $\Phi^+ = \{\delta_i - \delta_j\ | \ i < j \in \hat{I}\}$ be a positive system, and restrict this notation to the even and odd roots in the obvious way. Lastly, let $\mathcal{W}_{m+1|n} \cong S_{m+1} \times S_n$ denote the Weyl group of $\mathfrak{gl}(m+1|n)$ with natural action on $\mathfrak{h}_{m+1|n}^{*}$, i.e, $S_{m+1}$ permutes $\{\delta_0, \dots, \delta_m\}$ and $S_n$ permutes $\{\varepsilon_1, \dots, \varepsilon_n\}$.

Furthermore, we can define for any $\alpha \in \Phi_{\overline{0}}$ the corresponding coroot $\alpha^\vee \in \mathfrak{h}_{m+1|n}$ such that 
\[
\langle \lambda, \alpha^\vee \rangle = \frac{2(\lambda, \alpha)}{(\alpha, \alpha)}   \ \ \forall \lambda \in \mathfrak{h}_{m+1|n}^*.
\]
The simple reflection $s_\alpha$ acts on $\mathfrak{h}_{m+1|n}^*$ as expected: $s_\alpha(\lambda) = \lambda - \langle \lambda, \alpha^\vee \rangle \alpha$. Define the Weyl vector $\rho_{m+1|n}$ as follows:
\[\label{normWeyl}
\rho_{m+1|n} \coloneqq \sum_{i=0}^m (m-i+1)\delta_i - \sum_{j=1}^n j\varepsilon_j = (m+1,m,\ldots, 1, -1,-2,\ldots, -n).
\] 
Note that our Weyl vector is a shifted version of the standard definition, which is half the sum of the positive even roots minus half the sum of the positive odd roots, but all this changes is that the Harish-Chandra homomorphism gets shifted. A weight $\lambda \in \mathfrak{h}_{m+1|n}^*$ is said to be \textit{antidominant} if $\langle \lambda + \rho_{m+1|n}, \alpha^\vee\rangle \not\in \mathbb{Z}_{> 0}$ for all $\alpha \in \Phi^+_{\overline{0}}$. An element $\lambda \in \mathfrak{h}_{m+1|n}^*$ is said to be \textit{typical} (relative to $\Phi^+$) if there is no positive isotropic $\alpha \in \Phi_{\overline{1}}$ such that $(\lambda + \rho_{m+1|n}, \alpha) = 0$.
\par

For a weight $\lambda = (\lambda_0,\ldots, \lambda_m, \lambda_{\bar{1}},\ldots, \lambda_{\bar{n}})$ of $\mathfrak{gl}(m+1|n)$, denote by $M(\lambda)$ and $L(\lambda)$ the Verma module of highest weight $\lambda$ and its unique simple quotient, respectively. Here
\[M(\lambda) = U(\mathfrak{gl}(m+1|n))\otimes_{U(\mathfrak{h}_{m+1|n} \oplus \mathfrak{n}^+_{m+1|n})} \mathbb{C}_\lambda,\]
where $\mathbb{C}_\lambda$ is the one-dimensional weight representation of $\mathfrak{h}_{m+1|n}$ with weight $\lambda$ extended trivially to $\mathfrak{n}^+_{m+1|n}$.
\par
Finally (and very importantly to avoid confusion later on), to be consistent with our viewing $\mathfrak{gl}(m|n)$ as the subalgebra of $\mathfrak{gl}(m+1|n)$ spanned by $(e_{ij})_{i,j\in I}$, we restrict all of the above definitions to $\mathfrak{gl}(m|n)$ in the obvious way so that everything is compatible with the inclusion $\mathfrak{gl}(m|n) \xhookrightarrow{} \mathfrak{gl}(m+1|n)$ (with the ``extra" index being $0$). For instance,
\begin{enumerate}
    \item we will simply write a weight $\lambda \in \mathfrak{h}_{m|n}^*$ of $\mathfrak{gl}(m|n)$ as an $(m+n)$-tuple, understanding that $\lambda_0 = 0$ under the inclusion $\mathfrak{h}_{m|n}^* \xhookrightarrow{} \mathfrak{h}_{m+1|n}^*$;
    \item the Weyl group $\mathcal{W}_{m|n}$ is $S_m \times S_n$ and the Weyl vector $\rho_{m|n}$ is
    \[\rho_{m|n} \coloneqq (m,m-1,\ldots, 1, -1,-2,\ldots, -n);\]
    \item a Verma module $M(\lambda)$ and its simple quotient $L(\lambda)$ can either mean such a $\mathfrak{gl}(m+1|n)$-module with highest weight $\lambda$ or such a $\mathfrak{gl}(m|n)$-module with highest weight $\lambda$ (not by restriction). Context will make it clear to which we refer (using a $(m+n)$-tuple will mean interpret it as a module over $\mathfrak{gl}(m|n)$, for instance).
\end{enumerate}
 
\subsection{Superalgebra of Differential Operators}
In this subsection, we define the superalgebra $\mathcal{D}'(m|n)$ of differential operators. 
First, we define the superalgebra of polynomials that $\mathcal{D}'(m|n)$ operates on.

Define $\mathcal{F} \coloneqq \mathbb{C}[t_0^{\pm 1},t_1,\dots, t_m] \otimes \Lambda_n$, where $\Lambda_n$ is the $n$-generator exterior algebra with odd generators $t_{\bar{1}},\ldots, t_{\bar{n}}$. 
The defining relations of $\Lambda_n$ are given by
\[t_it_j = - t_jt_i\]
for $i,j\in \{\bar{1},\ldots,\bar{n}\}$. This anticommutativity, combined with the fact that $\mathbb{C}[t_0^{\pm 1},t_1,\dots, t_m]$ is commutative, means that $\mathcal{F}$ is supercommutative, i.e. that is $x$ and $y$ are homogeneous, then $xy = (-1)^{|x||y|}yx$. A basis for $\mathcal{F}$ is given by vectors of the form $t_0^{k_0}t_1^{k_1}\cdots t_m^{k_m}t_{\bar{1}}^{k_{\bar{1}}}\cdots t_{\bar{n}}^{k_{\bar{n}}}$, where $k_0 \in \mathbb{Z}$, $k_1, \dots, k_m \in \mathbb{Z}_{\geq 0}$, and $k_{\bar{1}}, \dots k_{\bar{n}} \in \{0,1\}$. For convenience, we will drop the tensor symbol in the notation and also freely permute the $t_i$'s in accordance with the supercommutativity rules.

Next, we define the operation $\tfrac{\partial}{\partial t_i}$. We write $\partial_i$ as shorthand for $\tfrac{\partial}{\partial t_i}$. 
Consider an element $t_0^{k_0}t_1^{k_1}\cdots t_m^{k_m}t_{\bar{1}}^{k_{\bar{1}}}\cdots t_{\bar{n}}^{k_{\bar{n}}}\in \mathcal{F}$. To operate by $\partial_i$, we first move $t_i^{k_i}$ to the front using supercommutativity. Then replace $t_i^{k_i}$ by $k_it_i^{k_i-1}$. Extend to the whole of $\mathcal{F}$ by linearity and supercommutativity.

Then, we define the superalgebra of differential operators $\mathcal{D}'(m|n)$ to be the superalgebra generated by: left multiplication by $\frac{t_i}{t_0}$ for $i\in \hat{I}$, where we will abuse notation and simply write $\frac{t_i}{t_0}$ for this operation; and $t_0\partial_i$ for $i\in \hat{I}$, which consists of left applying the $i$'th derivative and then left multiplication by $t_0$. The parity of these is operators is given by the parity of $i \in \hat{I}$.  We define the element 
\[\mathcal{E} \coloneqq \sum_{i \in \hat{I}} \left(\frac{t_i}{t_0}\right)(t_0\partial_i)  =\sum_{i\in \hat{I}} t_i\partial_i \in \mathcal{D}'(m|n).\]
Note that $\mathcal{E}$ generates the center of $\mathcal{D'}(m|n).$

The following lemma collects some identities in $\mathcal{D}'(m|n)$. The proofs are simple so we omit them.
\begin{lemsub}\label{lem: identities for D'(m|n)}
    The superalgebra $\mathcal{D}'(m|n)$ of differential operators on $\mathbb{C}[t_0^{\pm 1},t_1,\dots, t_m] \otimes \Lambda_n$ generated by $\frac{t_i}{t_0}$ for $i\in I$ and $t_0\partial_i$ for $i\in \hat{I}$ satisfies the following properties:
    \begin{enumerate}
        \item We have 
        \begin{align*}
            t_it_j&= (-1)^{|i||j|}t_jt_i,\tag{for $i,j\in \hat{I}$}\\
            \partial_it_j &= (-1)^{|i||j|}t_j\partial_i, \tag{for $i, j\in \hat{I}$, and $ i \ne j$}\\
            \partial_i\partial_j&= (-1)^{|i||j|}\partial_j\partial_i. \tag{for $i,j\in \hat{I}$, and $i\ne j$} 
        \end{align*}
        \item We have
        \[[\partial_a, t_a]=\partial_at_a - (-1)^{|a|}t_a\partial_a= 1\]
        for $a\in \hat{I}$.
        \item The element $\mathcal{E}$ is central in $\mathcal{D}'(m|n)$ and if $p = t_0^{e_0}t_1^{e_1}\cdots t_m^{e_m}t_{\bar{1}}^{e_{\bar{1}}}\cdots t_{\bar{n}}^{e_{\bar{n}}}$, then $\mathcal{E} p=(\deg p)p$, where $\deg p = e_0 + e_1+\cdots + e_m + e_{\bar{1}}+\cdots + e_{\bar{n}}$.
        \item $Z(\mathcal{D}'(m|n)\otimes U(\mathfrak{gl}(m|n))) = \mathbb{C}[\mathcal{E}] \otimes Z(U(\mathfrak{gl}(m|n)))$.
    \end{enumerate}
\end{lemsub}

\section{The homomorphism \texorpdfstring{$\varphi$}{TEXT}}
\label{sec: homomorphism}
In this section, we construct the homomorphism \[\varphi_R: U(\mathfrak{gl}(m+1|n))\to \mathcal{D}'(m|n)\otimes U(\mathfrak{gl}(m|n))\] that extends the homomorphism $\rho$ in~\cite{GR_2021}. 

\begin{thm}\label{thm: homomorphism}
    For any element $R$ in $Z(\mathcal{D}'(m|n)\otimes U(\mathfrak{gl}(m|n)))= \mathbb{C}[\mathcal{E}] \otimes Z(U(\mathfrak{gl}(m|n)))$, the correspondence given by
    \begin{align*}
        e_{ab}&\mapsto t_a\partial_b\otimes 1 + 1\otimes e_{ab}+\delta_{ab}(-1)^{|a||b|}R,  \tag{$a,b\in I$}\\
        e_{a0}&\mapsto t_a\partial_{0}\otimes 1 -\sum_{j\in I} (-1)^{|a||j|}\frac{t_j}{t_0}\otimes e_{aj}, \tag{$a\in I$} \\
        e_{0b}&\mapsto t_0\partial_b\otimes 1, \tag{$b\in I$} \\
        e_{00}&\mapsto t_0\partial_0 \otimes 1 +R
    \end{align*}
    extends by the universal property to a homomorphism $\varphi_R: U(\mathfrak{gl}(m+1|n))\to \mathcal{D}'(m|n)\otimes U(\mathfrak{gl}(m|n))$.
\end{thm}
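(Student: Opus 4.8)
The plan is to verify that the proposed assignment respects the defining relations of $U(\gl(m+1|n))$, namely the super-commutator relations $[e_{ij}, e_{kl}] = \delta_{jk} e_{il} - (-1)^{(|i|+|j|)(|k|+|l|)} \delta_{li} e_{kj}$ for all $i,j,k,l \in \hat{I}$; by the universal property of the universal enveloping superalgebra, this suffices. Denote the proposed images by $E_{ij} \coloneqq \varphi_R(e_{ij})$. One first checks that parities are preserved: each summand in the formula for $E_{ij}$ is homogeneous of parity $|i|+|j|$ (using that $\mcE$ and $R$ are even, that $t_a\partial_b$ has parity $|a|+|b|$ in $\mcD'(m|n)$, and Lemma~\ref{lem: identities for D'(m|n)}(1)). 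Then one computes $[E_{ij}, E_{kl}]$ in $\mcD'(m|n)\otimes U(\gl(m|n))$ and matches it against $\varphi_R$ applied to the right-hand side of the bracket relation.

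The verification naturally splits into cases according to how many of the indices $i,j,k,l$ equal $0$. The workhorse computation is the commutator of two operators of the form $t_a\partial_b$ in $\mcD'(m|n)$: using Lemma~\ref{lem: identities for D'(m|n)}(1),(2) one obtains $[t_a\partial_b, t_c\partial_d] = \delta_{bc}\, t_a\partial_d - (-1)^{(|a|+|b|)(|c|+|d|)}\delta_{da}\, t_c\partial_b$ (with appropriate sign bookkeeping for the half-integer moves past $t_0$ when $0$ is among the indices), which mirrors the $\gl$ relation exactly; combined with the fact that the $\mcD'(m|n)$-factor and the $U(\gl(m|n))$-factor commute in the tensor product, the case where all of $i,j,k,l$ lie in $I$ reduces to the known relations in $U(\gl(m|n))$ plus the observation that the diagonal correction terms $\delta_{ab}(-1)^{|a||b|}R$, being central, contribute nothing to any commutator and are consistent with each other (this last point uses $R \in Z(\mcD'(m|n)\otimes U(\gl(m|n)))$). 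The cases involving index $0$ require care with the mixed terms: for instance $[E_{a0}, E_{0b}]$ must reproduce $E_{ab} - (-1)^{(\cdot)}\delta_{ab} E_{00}$, and here the sum $-\sum_{j\in I}(-1)^{|a||j|}\frac{t_j}{t_0}\otimes e_{aj}$ in $E_{a0}$ interacts with $t_0\partial_b\otimes 1$ in $E_{0b}$: the term $[t_a\partial_0, t_0\partial_b]$ produces $t_a\partial_b - \delta_{ab} t_0\partial_0$ (up to sign), while $[\,-\sum_j (-1)^{|a||j|}\tfrac{t_j}{t_0}\otimes e_{aj},\, t_0\partial_b\otimes 1\,]$ produces $1\otimes e_{ab}$ (the $\partial_b$ hits $t_b$, cancelling the $t_0$'s against each other), and the central $R$-terms assemble correctly — one checks the $\delta_{ab}$ coefficients of $R$ on both sides match. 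Similarly $[E_{a0}, E_{cb}]$, $[E_{0a}, E_{0b}]$, $[E_{a0},E_{b0}]$, $[E_{00}, E_{ab}]$, etc., each reduce to a handful of applications of Lemma~\ref{lem: identities for D'(m|n)} together with the $\gl(m|n)$ relations and the centrality of $\mcE$ and $R$.

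The main obstacle will be the sign bookkeeping: the super-commutator relation carries the factor $(-1)^{(|i|+|j|)(|k|+|l|)}$, the operators $t_a\partial_b$ pick up signs when rearranged via supercommutativity (Lemma~\ref{lem: identities for D'(m|n)}(1)), and moving a derivative $\partial_i$ past $t_0$ (which has parity $0$) is harmless but moving it past odd $t_j$ flips a sign — so one must be scrupulous, especially in the terms $-\sum_{j\in I}(-1)^{|a||j|}\frac{t_j}{t_0}\otimes e_{aj}$ where the index $j$ ranges over both even and odd values and the exponent $|a||j|$ must be tracked through every manipulation. A secondary subtlety is that $\frac{t_j}{t_0}$ and $t_0\partial_i$ do not commute with $\mcE$-independent rescalings in a naive way; one should record once and for all the identity $[\,\frac{t_j}{t_0},\, t_0\partial_i\,] = -(-1)^{|i||j|}\frac{t_j}{t_i}\cdot(\text{stuff})$ — more precisely the relevant relations $\left[\tfrac{t_j}{t_0}, t_0\partial_i\right] = -\delta_{ij} + (\text{terms that vanish or are absorbed})$ — and reuse it. Because all of this is mechanical once the building-block commutators are in hand, I would state the key lemma-level identities up front, then present the case analysis compactly, deferring the full sign-by-sign grind to Appendix~\ref{proof} as the authors indicate they do.
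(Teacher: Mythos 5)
Your plan is exactly the paper's proof: a direct case-by-case verification (organized by how many indices equal $0$) that the proposed images satisfy the $\gl(m+1|n)$ super-commutator relations, using the supercommutation identities and $[\partial_a,t_a]=1$ from Lemma~\ref{lem: identities for D'(m|n)} together with centrality of $R$, which is what Appendix~\ref{proof} carries out. One small caution in your ``record once'' identity: the correct form is $\left[\tfrac{t_j}{t_0},\,t_0\partial_i\right]=-(-1)^{|i||j|}\delta_{ij}=-(-1)^{|i|}\delta_{ij}$, so the sign depends on the parity of $i$; this is precisely the kind of bookkeeping the appendix tracks term by term.
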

The proof is given in Appendix \ref{proof}; it is a long but straightforward verification of commutators. We will write $\varphi$ as shorthand for $\varphi_R$ if we are referring to an arbitrary $R$.
Now, let $\varphi^s$ denote the restriction of $\varphi_R$ to $U(\mathfrak{sl}(m+1|n))$. The formulas for $\varphi_R$ give us the following:
\begin{cor}\label{cor: sl}
    The correspondence given by
    \begin{align*}
        e_{ab} &\mapsto t_a\partial_b\otimes 1 + 1\otimes e_{ab}, \tag{$a,b\in I$ and $a\ne b$}\\
        e_{a0}&\mapsto t_a\partial_{0}\otimes 1 -\sum_{j\in I} (-1)^{|a||j|}\frac{t_j}{t_0}\otimes e_{aj},  \tag{$a\in I$}\\
        e_{0b}&\mapsto t_0\partial_b\otimes 1,\tag{$b\in I$}\\
        e_{00}-(-1)^{|a|}e_{aa} &\mapsto (t_0\partial_0-(-1)^{|a|}t_a\partial_a)\otimes 1 - 1\otimes (-1)^{|a|}e_{aa} \tag{$a\in I$}
    \end{align*}
    extends by the universal property to a homomorphism $\varphi^s: U(\mathfrak{sl}(m+1|n))\to \mathcal{D}'(m|n)\otimes U(\mathfrak{gl}(m|n))$ (notice this doesn't depend on the choice of $R$). 
\end{cor}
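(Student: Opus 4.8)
The plan is to deduce the corollary from Theorem~\ref{thm: homomorphism} essentially for free, using that $\xsl(m+1|n)$ is a Lie sub-superalgebra of $\gl(m+1|n)$ — it is the kernel of the Lie superalgebra homomorphism $\str$. By the PBW theorem, the inclusion $\xsl(m+1|n)\hookrightarrow\gl(m+1|n)$ induces an embedding of superalgebras $U(\xsl(m+1|n))\hookrightarrow U(\gl(m+1|n))$, so the restriction $\varphi^s := \varphi_R|_{U(\xsl(m+1|n))}$ is automatically a superalgebra homomorphism. Consequently, the assignment listed in the corollary on the spanning set $\{e_{ij} : i\ne j\in\hat I\}\cup\{e_{00}-(-1)^{|i|}e_{ii} : i\in I\}$ of $\xsl(m+1|n)$ extends, uniquely by the universal property, to a homomorphism of $U(\xsl(m+1|n))$ — namely $\varphi^s$ — as soon as one checks that these formulas really are the images of those elements under $\varphi_R$.

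That check is a short computation directly from the defining formulas of Theorem~\ref{thm: homomorphism}. For $e_{ab}$ with $a\ne b$, the term $\delta_{ab}(-1)^{|a||b|}R$ vanishes, leaving $t_a\partial_b\otimes 1 + 1\otimes e_{ab}$, and the formulas for $e_{a0}$ and $e_{0b}$ carry no $R$ to begin with. For the diagonal generator $e_{00}-(-1)^{|a|}e_{aa}$ with $a\in I$, one computes
\[
\varphi_R\bigl(e_{00}-(-1)^{|a|}e_{aa}\bigr) = \bigl(t_0\partial_0\otimes 1 + R\bigr) - (-1)^{|a|}\bigl(t_a\partial_a\otimes 1 + 1\otimes e_{aa} + (-1)^{|a||a|}R\bigr).
\]
Since $|a|\in\{0,1\}$ we have $(-1)^{|a||a|}=(-1)^{|a|}$, so the $R$-contributions total $R-(-1)^{2|a|}R = 0$, and what remains is $(t_0\partial_0-(-1)^{|a|}t_a\partial_a)\otimes 1 - 1\otimes(-1)^{|a|}e_{aa}$, exactly the stated formula. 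In particular none of the images depends on $R$, which is the parenthetical claim.

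I do not expect any real obstacle here: the genuinely nontrivial content — that these assignments are compatible with all brackets of $\xsl(m+1|n)$ — is inherited verbatim from Theorem~\ref{thm: homomorphism} via restriction to a subalgebra, and the only point requiring attention is the elementary sign identity $(-1)^{|a||a|}=(-1)^{|a|}$ together with the resulting cancellation of the central term $R$ in the diagonal case.
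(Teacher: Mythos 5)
Your proposal is correct and follows the same route as the paper, which simply defines $\varphi^s$ as the restriction of $\varphi_R$ to $U(\xsl(m+1|n))$ and reads off the stated formulas from Theorem~\ref{thm: homomorphism}, with the central term $R$ cancelling on the diagonal generators exactly as you compute via $(-1)^{|a||a|}=(-1)^{|a|}$. No gaps.
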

Let
\begin{align*}
    &G_1^{\mathfrak{gl}(m|n)} \coloneqq \sum_{i\in I} e_{ii},\\
    &G_{1}^{\mathfrak{gl}(m+1|n)} \coloneqq \sum_{i\in \hat{I}} e_{ii}
\end{align*}
be the {\it first Gelfand invariants} of $\mathfrak{gl}(m|n)$ and $\mathfrak{gl}(m+1|n)$, respectively (for the full definition of the Gelfand invariants see Section~\ref{sec: capelli berezinians}). Note that they are both central in the corresponding universal enveloping algebras.
Now, let us define the following homomorphisms: 
\begin{enumerate}
    \item 
    the natural embedding \[\iota^s: U(\mathfrak{sl}(m+1|n))\to U(\mathfrak{gl}(m+1|n));\] 
    \item 
    (for $m+1\ne n$) the projection \[\pi^g: U(\mathfrak{gl}(m+1|n))\to U(\mathfrak{sl}(m+1|n))\] defined by \[\pi^g(B)= B-\frac{1}{m+1-n}\mathrm{str}(B)G_{1}^{\mathfrak{gl}(m+1|n)}\]
    for $B\in \mathfrak{gl}(m|n)$ and the universal property; 
    \item 
    and the map \[\iota^g: U(\mathfrak{gl}(m|n))\to U(\mathfrak{sl}(m+1|n))\] defined by $\iota^g(C) = C-\mathrm{str}(C)e_{00}$ for $C \in \mathfrak{gl}(m|n)$ and the universal property.
\end{enumerate}
For the following proposition, let us suppose $m+1\ne n$ and set 
\begin{align}\label{R1}
    R_1 \coloneqq -\frac{1}{m-n+1}\left(\mathcal{E}\otimes 1+1\otimes G_1^{\mathfrak{gl}(m|n)}\right).
\end{align}
From the formulas we deduce that $\varphi_{R_1} = \varphi^s \circ \pi^g$ (note that $R_1$ is a super-analog of the $R_1$ defined in \cite{GR_2021}). Let us also define
\[\gamma: U(\mathfrak{sl}(m+1|n))\to \mathcal{D}'(m|n)\otimes U(\mathfrak{sl}(m+1|n))\]
by $\gamma \coloneqq (1\otimes \iota^g) \circ \varphi^s$. 
\begin{prop}\label{prop: commutative diagram}
    For $m+1\ne n$, we have $\gamma = (1 \otimes \iota^g)\varphi^s$, $\pi^g\iota^s = \mathrm{Id}$, and 
    $\varphi^s\pi^g = \varphi_{R_1}$, and all other relations that directly follow from these three; in that sense, the following diagram is commutative (in the category of superalgebras).

    \medskip
    \begin{center}
    \begin{tikzpicture}[shorten >=1pt,node distance=4cm,on grid,auto]
      \tikzstyle{every state}=[fill={rgb:black,1;white,10}]
        \node (q_0)                  {$U(\mathfrak{gl}(m+1|n))$};
        \node (q_1)  at (6,0)        {$\mathcal{D}'(m|n)\otimes U(\mathfrak{gl}(m|n))$};
        \node (q_2)  at (6,-2.5)     {$\mathcal{D}'(m|n)\otimes U(\mathfrak{sl}(m+1|n))$};
        \node (q_3)  at (0,-2.5)     {$U(\mathfrak{sl}(m+1|n))$};
        \node (p_3)  at (-0.2,-2.25) {};
        \node (p_0)  at (-0.2,-0.15) {};
        
        \path[->,thick,-{Stealth[width=5pt, length=10pt]}]
        (q_0) edge node {$\varphi_{R_1}$}             (q_1)
              edge node {$\pi^g$}            (q_3)
        (q_1) edge node {$1\otimes \iota^g$} (q_2)
        (q_3) edge node {$\varphi^s$}           (q_1)
              edge node {$\gamma$}           (q_2)
        (p_3) edge node {$\iota^s$}          (p_0);
    \end{tikzpicture}
 
    \end{center}
\end{prop}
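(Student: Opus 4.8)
The plan is to verify the three asserted identities directly from the explicit formulas, since each is an equality of superalgebra homomorphisms out of a universal enveloping algebra and hence needs only to be checked on the generators $\{e_{ij}\}$ (or on a chosen basis of $\xsl(m+1|n)$). Throughout I will use that $\varphi_{R_1}$, $\varphi^s$, $\pi^g$, $\iota^s$, $\iota^g$, $\gamma$ are all well-defined superalgebra homomorphisms by the universal property (Theorem~\ref{thm: homomorphism}, Corollary~\ref{cor: sl}, and the constructions preceding the proposition), so the commutativity of the diagram reduces to the three edge-relations $\gamma = (1\otimes\iota^g)\varphi^s$, $\pi^g\iota^s = \Id$, and $\varphi^s\pi^g = \varphi_{R_1}$; the remaining commuting triangles/squares in the diagram are then formal consequences.

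First I would dispatch $\gamma = (1\otimes \iota^g)\circ\varphi^s$: this is true \emph{by definition} of $\gamma$, so nothing is to prove there beyond recording it. Next, $\pi^g\circ\iota^s = \Id_{U(\xsl(m+1|n))}$: it suffices to check this on the basis $\{e_{ij} : i\neq j\}\cup\{e_{00}-(-1)^{|i|}e_{ii} : i\in I\}$ of $\xsl(m+1|n)$. For $e_{ij}$ with $i\neq j$ we have $\str(e_{ij}) = 0$, so $\pi^g(e_{ij}) = e_{ij}$; for $h_i := e_{00}-(-1)^{|i|}e_{ii}$ we compute $\str(h_i) = 1 - (-1)^{|i|}(-1)^{|i|} = 1 - 1 = 0$ (here $(-1)^{|i|}e_{ii}$ contributes $-(-1)^{|i|}$ to the supertrace with an extra sign from the $D$-block when $|i|=1$, and one checks the two cases $|i|=0,1$ give $\str(h_i)=0$ either way), hence $\pi^g(h_i) = h_i$. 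Thus $\pi^g\iota^s$ fixes a spanning set and equals the identity.

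The main work — and the step I expect to be the only nontrivial one — is $\varphi^s\circ\pi^g = \varphi_{R_1}$, i.e.\ that the composite $U(\gl(m+1|n))\to U(\xsl(m+1|n))\to \mcD'(m|n)\otimes U(\gl(m|n))$ agrees with the $R_1$-specialization of $\varphi$. Again it is enough to check on the generators $e_{ab}, e_{a0}, e_{0b}, e_{00}$ with $a,b\in I$. For $e_{0b}$ and $e_{a0}$, which have zero supertrace, $\pi^g$ acts as the identity and $\varphi^s$ reproduces exactly the $\varphi_R$ formulas for these (which do not involve $R$), so these cases are immediate. The substance is in $e_{ab}$ and $e_{00}$, where one must track how $\pi^g$ subtracts a multiple of $G_1^{\gl(m+1|n)}$ and then how $\varphi^s$ sends $G_1^{\gl(m+1|n)} = \sum_{i\in\hat I} e_{ii}$. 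One computes $\varphi^s\bigl(G_1^{\gl(m+1|n)}\bigr) = \varphi^s(e_{00}) + \sum_{i\in I}\varphi^s(e_{ii})$, and using $e_{ii} = (e_{ii}-(-1)^{|i|}e_{00}) \cdot(\text{correction}) $ — more directly, writing $e_{00} = \tfrac{1}{m+1-n}\sum$ combinations or just expanding via $\varphi^s(e_{00}-(-1)^{|i|}e_{ii})$ and $\varphi^s(e_{ij})$ for $i\neq j$ — one finds $\varphi^s(G_1^{\gl(m+1|n)})$ is a scalar multiple of $\mcE\otimes 1 + 1\otimes G_1^{\gl(m|n)}$ plus the appropriate diagonal terms, which is precisely what is needed so that $\varphi^s(\pi^g(e_{ab})) = t_a\partial_b\otimes 1 + 1\otimes e_{ab} + \delta_{ab}(-1)^{|a||b|}R_1$ and $\varphi^s(\pi^g(e_{00})) = t_0\partial_0\otimes 1 + R_1$ with $R_1 = -\tfrac{1}{m-n+1}(\mcE\otimes 1 + 1\otimes G_1^{\gl(m|n)})$. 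The bookkeeping of supertrace signs $(-1)^{|i|}$ and the factor $\tfrac{1}{m+1-n}$ (whence the hypothesis $m+1\neq n$) is the only place care is required; once $\varphi^s(G_1^{\gl(m+1|n)})$ is identified, the equality $\varphi^s\pi^g = \varphi_{R_1}$ drops out. Finally, the commutativity of the full diagram follows formally: the square $q_0\to q_1\to q_2$ versus $q_0\to q_3\to q_1\to q_2$ and the triangle $q_3\to q_1 \to q_2$ versus $q_3\to q_2$ all reduce to the three relations just established together with $\gamma := (1\otimes\iota^g)\varphi^s$.
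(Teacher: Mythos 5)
Your proposal is correct and matches the paper's (implicit) argument: the paper likewise takes $\gamma=(1\otimes\iota^g)\varphi^s$ as a definition, gets $\pi^g\iota^s=\Id$ from $\str$ vanishing on the $\xsl$-basis, and deduces $\varphi^s\pi^g=\varphi_{R_1}$ directly from the generator formulas, with the whole diagram following formally. One small caution: $\varphi^s(G_1^{\gl(m+1|n)})$ is not literally defined since $G_1^{\gl(m+1|n)}\notin\xsl(m+1|n)$, so in the $e_{aa}$ and $e_{00}$ cases you should (as you indicate) first expand $\pi^g(e_{aa})$ and $\pi^g(e_{00})$ in the basis $\{e_{00}-(-1)^{|i|}e_{ii}\}_{i\in I}$ and then apply $\varphi^s$, which indeed produces the $(-1)^{|a|}R_1$ and $R_1$ terms.
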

Most of the diagram still holds if instead of $\varphi_{R_1}$ we take any $\varphi_R$ for any $R \in Z(\mathcal{D}'(m|n) \otimes U(\mathfrak{gl}(m|n)))$. The only change to the diagram is that the $\pi^g$ arrow is deleted (and the assumption $m+1 \neq n$ can be relaxed).

\section{Inflating representations}
\label{sec: inflating}
In this section we use $\varphi$ (recall that $\varphi$ is shorthand for $\varphi_R$) to extend a simple representation of $U(\mathfrak{gl}(m|n))$ to a representation of $U(\mathfrak{gl}(m+1|n))$ and find conditions for when this new representation is simple.

For any $a \in \mathbb{C}$, define the $\mathcal{D}'(m|n)$-module 
\[\mathcal{F}_a \coloneqq \mathrm{span}\{t_0^{a-k_1-\cdots - k_{\bar{n}}}t_1^{k_1}\cdots t_n^{k_m} t_{\bar{1}}^{k_{\bar{1}}}\cdots t_{\bar{n}}^{k_{\bar{n}}}\mid k_1,\ldots, k_m\in \mathbb{Z}_{\ge 0}, \ k_{\bar{1}},\ldots, k_{\bar{n}}\in\{0,1\}\}.\]
This is a generalization of the $\mathcal{F}_a$ defined in \cite{GR_2021}, both in the sense that it is defined for superalgebras and in that we do not require $a\in \mathbb{Z}$.
Note that $\mathcal{F}_a = \mathcal{D}'(m|n)(t_0^a)$ (i.e. the set that results when we apply elements of $\mathcal{D}'(m|n)$ to $t_0^a$) and that $\mathcal{E} = a\mathrm{Id}$ on $\mathcal{F}_a$. Given a weight $\lambda$ of $\mathfrak{gl}(m|n)$, we write $\mathcal{F}_a\otimes L(\lambda)$ (and $\mathcal{F}_a\otimes M(\lambda)$) for both the representation of $\mathcal{D}'(m|n)\otimes U(\mathfrak{gl}(m|n))$ and the representation  $U(\mathfrak{gl}(m+1|n))$ via $\varphi$, and the meaning should be clear from context. We fix a highest weight vector $v_\lambda$ of $L(\lambda)$.

Let us now consider inflating the simple module $L(\lambda)$ using $\varphi$. In particular, we investigate when the resulting module is simple.
\begin{prop}\label{prop: only highest weight vector}
    Let $v_\lambda$ be the unique highest weight vector for $L(\lambda)$, up to scaling. The vector $t_0^{a}\otimes v_\lambda$ is the \textit{only} highest weight vector (up to scaling) for $\mathcal{F}_a\otimes L(\lambda)$, considered as a $U(\mathfrak{gl}(m+1|n))$-module.
\end{prop}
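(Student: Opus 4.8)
The plan is to exploit the weight-space structure of $\mcF_a \otimes L(\lambda)$ as a $\gl(m+1|n)$-module and reduce the problem to a statement about the $\gl(m|n)$-module $L(\lambda)$, where we already know the highest weight vector is unique. First I would record the weights of $\mcF_a$ as a module over the Cartan $\h_{m+1|n}$: using the formulas for $\varphi_R$ restricted to the diagonal, the monomial $t_0^{a-\sum k_i} t_1^{k_1}\cdots t_{\bar n}^{k_{\bar n}}$ is a weight vector, and one computes its $\h_{m+1|n}$-weight explicitly (the $e_{00}$-eigenvalue involves $a - \sum k_i$ plus the contribution of $R$, and the $e_{ii}$-eigenvalue for $i \in I$ involves $k_i$ plus the contribution of $R$, plus the action of $1 \otimes e_{ii}$ on the $L(\lambda)$ factor). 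The key qualitative point is that among all these monomials, $t_0^a$ is the \emph{unique} one whose $e_{00}$-eigenvalue is maximal (equivalently, the unique one annihilated by the raising operators $\varphi(e_{0b}) = t_0 \partial_b \otimes 1$ for $b \in I$), since $t_0\partial_b$ strictly decreases $k_0 = a - \sum k_i$ is wrong — rather, $t_0\partial_b$ lowers $k_b$ and raises $k_0$; the correct statement is that $t_0^a$ is the top of the $\mcF_a$ filtration by total degree in $t_1, \dots, t_{\bar n}$, and is the unique monomial killed by all $\partial_b$, $b \in I$.

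Next I would argue as follows. Suppose $w \in \mcF_a \otimes L(\lambda)$ is a highest weight vector, i.e. $\varphi(e_{0b}) w = 0$ for all $b \in I$ and $\varphi(e_{ij}) w = 0$ for all $i < j$ in $I$. Write $w = \sum_\mu p_\mu \otimes x_\mu$ where the $p_\mu$ range over monomial weight vectors of $\mcF_a$ and $x_\mu \in L(\lambda)$. The operators $\varphi(e_{0b}) = t_0 \partial_b \otimes 1$ act only on the first tensor factor. The condition $\varphi(e_{0b}) w = 0$ for all $b \in I$ forces $w$ to be supported only on monomials $p_\mu$ killed by every $\partial_b$, $b \in I$ — and the only such monomial is $t_0^a$ itself (since any monomial with some $k_b > 0$, $b \in I$, has $\partial_b$-derivative nonzero, and these derivatives for distinct surviving monomials are linearly independent). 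Hence $w = t_0^a \otimes x$ for some $x \in L(\lambda)$. Then for $i < j$ in $I$, using $\varphi(e_{ij}) = t_i \partial_j \otimes 1 + 1 \otimes e_{ij}$ and the fact that $t_i \partial_j (t_0^a) = 0$ (as $j \in I$ and $t_0^a$ has no $t_j$), we get $\varphi(e_{ij})(t_0^a \otimes x) = t_0^a \otimes e_{ij} x$, so $e_{ij} x = 0$ for all $i < j$ in $I$; that is, $x$ is a $\gl(m|n)$-highest weight vector in $L(\lambda)$. By the uniqueness of the highest weight vector in the simple module $L(\lambda)$, $x$ is a scalar multiple of $v_\lambda$, so $w \in \CC(t_0^a \otimes v_\lambda)$, as claimed.

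The main obstacle I anticipate is the linear-independence step: showing that if $w = \sum_\mu p_\mu \otimes x_\mu$ is annihilated by all $t_0 \partial_b$ ($b \in I$) then it is supported on $t_0^a$ alone. One must be slightly careful because $\partial_b$ applied to different monomials can produce the same monomial (e.g. $\partial_b(t_b t_c)$ and $\partial_c(t_b t_c)$ differ), so one cannot treat the conditions for different $b$ independently; instead the clean argument is to pick, among the monomials $p_\mu$ actually appearing, one of maximal total degree $d = \sum_{i \in I} k_i$ in the variables $t_1, \dots, t_{\bar n}$, and observe that if $d > 0$ then some $\partial_b$ does not kill it and, moreover, no monomial of strictly smaller degree can cancel the resulting term (a degree argument on the first tensor factor), contradicting $w$ being annihilated. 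This forces $d = 0$, i.e. the only surviving monomial is $t_0^a$. One should also verify at the outset that $t_0^a \otimes v_\lambda$ genuinely \emph{is} a highest weight vector — this is immediate from the formulas: $\varphi(e_{0b})(t_0^a \otimes v_\lambda) = (t_0\partial_b t_0^a)\otimes v_\lambda = 0$ and $\varphi(e_{ij})(t_0^a \otimes v_\lambda) = t_0^a \otimes e_{ij}v_\lambda = 0$ for $i<j$ in $I$ — so the proposition is really an assertion of uniqueness.
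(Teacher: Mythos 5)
Your proposal is correct and follows essentially the same route as the paper: first verify that $t_0^a\otimes v_\lambda$ is a highest weight vector, then use the operators $\varphi(e_{0b}) = t_0\partial_b\otimes 1$ to force any highest weight vector to be supported on $t_0^a$ alone (the paper uses a $t$-lexicographic-maximality trick where you use degree-maximality), and finally reduce to the uniqueness of $v_\lambda$ in the simple module $L(\lambda)$ via $\varphi(e_{ij}) = t_i\partial_j\otimes 1 + 1\otimes e_{ij}$. One small point: your degree argument as stated only excludes cancellation from strictly lower-degree monomials, but cancellation among distinct monomials of the same maximal degree is also impossible because, for a fixed $b$, $t_0\partial_b$ sends distinct monomials with positive $t_b$-exponent to nonzero multiples of distinct monomials; in fact this observation shows that each condition $(t_0\partial_b\otimes 1)w=0$ already forces every monomial in the support of $w$ to be killed by $\partial_b$, so no maximality argument is needed at all.
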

\begin{proof}
For $0<i<j$, we have that $e_{ij}$ acts as $t_i\partial_j\otimes 1 + 1\otimes e_{ij}$. Now $t_i\partial_j(t_0^a) = 0$ and $e_{ij}$ acts on $v_\lambda$ as $0$ since $i<j$, so $e_{ij}$ acts on $t_0^a\otimes v_\lambda$ as $0$. Also, for each $j > 0$, $e_{0j}$ acts on $t_0^a\otimes v_\lambda$ as $t_0\partial_j\otimes 1$. Now $t_0\partial_j (t_0^a)=0$, so $e_{0j}$ also acts as $0$. It follows that $t_0^a\otimes v_\lambda$ is a highest weight vector.

Now suppose
\begin{align}
    \widetilde{w} = p_1\otimes w_1+\cdots p_k\otimes w_k
\end{align}
is a highest weight vector, where \[p_c=t_0^{a-k_{1,c}-k_{2,c}-\cdots - k_{m,c}-k_{\bar{1},c}-\cdots - k_{\bar{n},c}}t_1^{k_{1,c}}\cdots t_n^{k_{m,c}} t_{\bar{1}}^{k_{\bar{1},c}}\cdots t_{\bar{n}}^{k_{\bar{n},c}},\] where $k_{1,c},\ldots, k_{m,c}\in \mathbb{Z}_{\ge 0}$ and $k_{\bar{1},c},\ldots, k_{\bar{n},c}\in\{0,1\}$. Furthermore assume the $p_c$ are distinct and the $w_c$ are nonzero. 
Now $e_{0j}$ (for $j\in I$), which acts as $t_0\partial_j\otimes 1$, must act as zero on $\widetilde{w}$. Pick the $p_c$ that is $t$-lexicographically maximal (i.e. relative to the $t_1$-degree, $t_2$-degree, and so on). Then,  $e_{01}^{k_{1,c}}\cdots e_{0\bar{n}}^{k_{\bar{n},c}}$ acts as zero on all $p_d\otimes w_d$ except for $p_c\otimes w_c$, on which it gives a nonzero result. Thus $e_{01}^{k_{1,c}}\cdots e_{0\bar{n}}^{k_{\bar{n},c}}$ does not act as zero on $\widetilde{w}$, a contradiction unless $p_c = t_0^a$, in which case $k=1$ and we can write $\widetilde{w} = t_0^a\otimes w$.

Then, for $i<j$, $e_{ij}$ acts as $t_i\partial_j\otimes 1 + 1 \otimes e_{ij}$, and this must act as zero on $\widetilde{w}$. First, note that the $t_i\partial_j\otimes 1$ part acts as zero. Thus we must have $e_{ij}$ acting on $w$ as $0$ for all $i<j$, i.e. $w$ is a highest weight vector of $L(\lambda)$, so $w$ is a scalar multiple of $v_\lambda$. Thus, $t_0^a \otimes v_\lambda$ is the only the highest weight vector (up to scaling) of $\mathcal{F}_a \otimes L(\lambda)$, considered as a $U(\mathfrak{gl}(m+1|n))$-module.
\end{proof}

We can write $R$ in the form $R = \sum_{i} \mathcal{E}^i \otimes z_i$ for some $z_i \in Z(U(\mathfrak{gl}(m|n)))$ by Lemma \ref{lem: identities for D'(m|n)}. A direct computation shows that $R$ acts as a scalar on $\mathcal{F}_a \otimes L(\lambda)$, which we will call $r$:

\begin{align*}
R\cdot\left(\sum_j f_j \otimes v_j\right) &= \left(\sum_{i} \mathcal{E}^i \otimes z_i\right)\left(\sum_j f_j \otimes v_j\right) = \sum_{i,j} (\deg f_j)^i f_j \otimes \chi_\lambda(z_i)v_j \\
&= \left(\sum_i a^i \chi_\lambda(z_i) \right)\left(\sum_{j} f_j \otimes v_j\right) = r\left(\sum_{j} f_j \otimes v_j\right),
\end{align*}
where $f_j \in \mathcal{F}_a, v_j \in L(\lambda)$ are arbitrary, and $\chi_\lambda$ is the central character corresponding to $\lambda$ (see Section~\ref{sec: harish chandra}). It follows that the weight for $t_0^a\otimes v_\lambda$ is
\begin{align}\label{weight}
\widetilde{\lambda} \coloneqq (a + r, \lambda_1 + r, \ldots, \lambda_m + r, \lambda_{\bar{1}} - r, \ldots, \lambda_{\bar{n}} -r).
\end{align}
Next, we find conditions on $\lambda$ for $t_0^a\otimes v_\lambda$ to generate the module $\mathcal{F}_a\otimes L(\lambda)$. For $i\in I$, define
\[f(i) \coloneqq\sum_{j<i} (-1)^{|j|},\]
where the sum runs over $j\in I$ with $j<i$ (recall the ordering on $I$ defined by $1<2<\cdots < m< \bar{1}<\cdots < \bar{n}$). Then $f(1)=0, f(2)=1,\ldots, f(m)=m-1, f(\bar{1}) = m, f(\bar{2})= m-1,\ldots, f(\bar{n}) = m+1-n$.

\begin{thm}\label{thm: when the highest weight vector generates the module}
    Let $\lambda$ be a $\mathfrak{gl}(m|n)$-weight. If $a+f(i)-(-1)^{|i|}\lambda_i$ is not a nonnegative integer for all $i \in I$, then $\mathcal{F}_a \otimes L(\lambda)$ is a highest weight module.
\end{thm}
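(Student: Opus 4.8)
Let $N\subseteq\mcF_a\otimes L(\lambda)$ be the $U(\gl(m+1|n))$-submodule generated by $t_0^a\otimes v_\lambda$. By Proposition~\ref{prop: only highest weight vector} this is the unique highest weight vector, so ``$\mcF_a\otimes L(\lambda)$ is a highest weight module'' is exactly the equality $N=\mcF_a\otimes L(\lambda)$. The plan is to grade $\mcF_a=\bigoplus_{d\ge 0}\mcF^{(d)}$ by the total degree $d$ in $t_1,\dots,t_{\bar n}$, so $\mcF^{(d)}=t_0^{a-d}\cdot(\text{degree-}d\text{ part})$ and, as a $\gl(m|n)$-module via $\varphi$, $\mcF^{(d)}\otimes L(\lambda)\cong S^d(V)\otimes L(\lambda)$ (up to an irrelevant twist by the central scalar $r$), where $V:=\CC^{m|n}$ is the vector representation. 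Since $\varphi(e_{00})=t_0\partial_0\otimes 1+R$ acts on $\mcF^{(d)}\otimes L(\lambda)$ by the pairwise distinct scalars $a-d+r$ and commutes with $\varphi(\gl(m|n))$, we get $N=\bigoplus_d N^{(d)}$ with each $N^{(d)}:=N\cap(\mcF^{(d)}\otimes L(\lambda))$ a $\gl(m|n)$-submodule, and it suffices to prove $N^{(d)}=\mcF^{(d)}\otimes L(\lambda)$ for every $d$ by induction. The base case $d=0$ is clear: on $\mcF^{(0)}=\CC t_0^a$ the operators $\varphi(e_{ij})$, $i,j\in I$, act — up to the scalar $r$ — as $1\otimes e_{ij}$, so $U(\gl(m|n))(t_0^a\otimes v_\lambda)=t_0^a\otimes L(\lambda)$.

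For the inductive step I would use the ``new'' lowering operators. The elements $\varphi(e_{i0})$, $i\in I$, carry $\mcF^{(d-1)}\otimes L(\lambda)$ into $\mcF^{(d)}\otimes L(\lambda)$, and $\{e_{i0}\}_{i\in I}$ spans a $\gl(m|n)$-submodule of $\gl(m+1|n)$ under the adjoint action isomorphic to $V$ (indeed $[e_{ij},e_{k0}]=\delta_{jk}e_{i0}$). Hence $e_i\otimes x\mapsto\varphi(e_{i0})\cdot x$ defines a $\gl(m|n)$-equivariant map $\Psi_{d-1}\colon V\otimes(\mcF^{(d-1)}\otimes L(\lambda))\to\mcF^{(d)}\otimes L(\lambda)$, whose image lies in $N^{(d)}$ by the inductive hypothesis; so it is enough to show $\Psi_{d-1}$ is surjective. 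Plugging in the explicit formula for $\varphi(e_{i0})$ and using $\mcF^{(\bullet)}\cong S^{\bullet}(V)$, one checks that $\Psi_{d-1}$ factors as $(\mu\otimes\mathrm{id}_{L(\lambda)})\circ\bigl((a-d+1)\,\mathrm{id}-\Omega_{13}\bigr)$, where $\mu\colon V\otimes S^{d-1}(V)\twoheadrightarrow S^{d}(V)$ is multiplication and $\Omega_{13}$ is the $\gl(m|n)$-split Casimir acting on the first and third tensorands $V\otimes L(\lambda)$ of $V\otimes S^{d-1}(V)\otimes L(\lambda)$ — the point being that the summands $\sum_j(-1)^{|i||j|}\tfrac{t_j}{t_0}\otimes e_{ij}$ of $\varphi(e_{i0})$ are precisely the action of the split Casimir. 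Since $\mu$ is surjective and $V\otimes S^{d-1}(V)\otimes L(\lambda)$ has finite length (it lies in category $\mathcal{O}$), $\Psi_{d-1}$ is surjective as soon as $a-d+1$ is \emph{not} an eigenvalue of the split Casimir $\Omega$ on $V\otimes L(\lambda)$.

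The crux is therefore to control the spectrum of $\Omega$ on $V\otimes L(\lambda)$, and I would show it is contained in $\{\ell_i:i\in I\}$ with $\ell_i:=(-1)^{|i|}\lambda_i-f(i)$. The composition factors of $L(\lambda)\otimes V$ have the same central characters as those of $M(\lambda)\otimes V$, and by the standard filtration of $M(\lambda)\otimes V$ with subquotients $M(\lambda+\delta_i)$ these are among $\{\chi_{\lambda+\delta_i}\}_{i\in I}$. On a composition factor with central character $\chi_{\lambda+\delta_i}$ the split Casimir $\Omega=\tfrac12\bigl(\Delta\Omega_2-\Omega_2\otimes 1-1\otimes\Omega_2\bigr)$ (with $\Delta$ the coproduct and $\Omega_2$ the quadratic Casimir) acts by the scalar $\tfrac12\bigl(\chi_{\lambda+\delta_i}(\Omega_2)-\chi_{\delta_1}(\Omega_2)-\chi_\lambda(\Omega_2)\bigr)$, since $\delta_1$ is the highest weight of $V$; a direct computation with the eigenvalue formula $\chi_\mu(\Omega_2)=(\mu,\mu+2\rho)$ — where $\rho$ is the \emph{true} Weyl vector of $\gl(m|n)$ (the usual half-sum of positive even roots minus that of positive odd roots, \emph{not} the shifted $\rho_{m|n}$) — collapses this scalar to exactly $\ell_i$. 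Granting this, if $a-d+1$ were an eigenvalue of $\Omega$ it would equal some $\ell_i$, forcing $a+f(i)-(-1)^{|i|}\lambda_i=d-1\in\ZZ_{\ge 0}$, contrary to hypothesis. Hence $\Psi_{d-1}$ is surjective, the induction closes, and $N=\mcF_a\otimes L(\lambda)$.

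The main obstacle is this last spectral computation — identifying the $\Omega$-eigenvalues on $V\otimes L(\lambda)$ and matching the resulting scalars with the combinatorial shift $f(i)$; this is where the precise hypothesis on $\lambda$ enters and where the super-Weyl-vector bookkeeping must be carried out with care, while the reduction to it is routine. There is also a more elementary but substantially more laborious route that bypasses the Casimir: apply the $\varphi(e_{i0})$ directly to $t_0^a\otimes v_\lambda$ and use the brackets $[e_{ij},e_{k0}]=\delta_{jk}e_{i0}$ to solve, one monomial $t^{\mathbf k}$ at a time, a triangular linear system whose pivots are the quantities $a+f(i)-(-1)^{|i|}\lambda_i$ minus a nonnegative integer.
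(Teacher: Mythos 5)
Your proposal is correct, but it takes a genuinely different route from the paper. The paper's proof is exactly the ``elementary but laborious'' alternative you mention at the end: it works monomial by monomial, applying $\varphi(e_{i0})$ to $t_0^{a_0}S\otimes v_\lambda$ and correcting with the operators $\varphi(e_{ij})$, $j<i$, to isolate $t_it_0^{a_0-1}S\otimes v_\lambda$ with coefficient $a+f(i)-(-1)^{|i|}\lambda_i-K$, $K\in\ZZ_{\ge 0}$ (and then a separate step to pass from $p\otimes v_\lambda$ to $p\otimes L(\lambda)$). Your argument instead grades $\mcF_a\otimes L(\lambda)$ by the $\varphi(e_{00})$-eigenvalue, reduces the induction to surjectivity of the map $\Psi_{d-1}$ built from the $\varphi(e_{i0})$, and factors $\Psi_{d-1}=(\mu\otimes\mathrm{id})\circ\bigl((a-d+1)\mathrm{id}-\Omega_{13}\bigr)$; I checked the Koszul signs and this factorization is right for the normalization $\Omega=\sum_{i,j}(-1)^{|j|}e_{ij}\otimes e_{ji}$, which is also the one satisfying $\Delta C_2=C_2\otimes 1+1\otimes C_2+2\Omega$. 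The spectral step you flag as the crux also works out: since $\Omega$ on $V\otimes L(\lambda)$ is, up to the scalars $\chi_{\delta_1}(C_2)$ and $\chi_\lambda(C_2)$, the action of the central element $C_2$, it preserves a composition series; the central characters occurring are among $\{\chi_{\lambda+\delta_i}\}_{i\in I}$ by the Verma filtration of $M(\lambda)\otimes V$ (in fact you only need that filtration, not finite length of the whole module), and the scalar $\tfrac12\bigl((\lambda+\delta_i,\lambda+\delta_i+2\rho)-(\delta_1,\delta_1+2\rho)-(\lambda,\lambda+2\rho)\bigr)$ with the true Weyl vector does collapse to $(-1)^{|i|}\lambda_i-f(i)$ in both parities, so ``$a-d+1$ is an eigenvalue'' forces $a+f(i)-(-1)^{|i|}\lambda_i=d-1\in\ZZ_{\ge 0}$, exactly the excluded case. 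What your route buys is a conceptual explanation of the hypothesis --- the quantities $a+f(i)-(-1)^{|i|}\lambda_i$ are precisely $a$ minus the split-Casimir eigenvalues attached to the central characters $\chi_{\lambda+\delta_i}$ --- at the cost of importing the Casimir eigenvalue formula, the tensor identity, and finite-length bookkeeping; the paper's computation is longer and sign-heavy but entirely self-contained. The only places to tighten are small: the twist on $\mcF^{(d)}\otimes L(\lambda)$ is by $r$ times the supertrace character (not a single central scalar), and ``not an eigenvalue $\Rightarrow$ surjective'' should be phrased through the annihilating polynomial of $\Omega$ coming from the composition series, which your central-character discussion already supplies.
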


\begin{proof}
Let $N$ be the submodule generated by the highest weight vector $t_0^a\otimes v_\lambda$. We first claim that all monomials in $\mathcal{F}_a$, and thus all elements of $\mathcal{F}_a$, create an element of $N$ when tensored with $v_\lambda$, the highest weight vector for $L(\lambda)$. We proceed by induction down on the exponent of $t_0$. Specifically, assume that $T \otimes v_\lambda= t_0^{a_0}S \otimes v_\lambda = t_0^{a_0}t_1^{a_1}\cdots t_{\bar{n}}^{a_{\bar{n}}}
\otimes v_\lambda \in N$ for $a_0 + a_1+\cdots  + a_{\bar{n}}= a$, and $a_i\in \mathbb{N}$ for $|i|=0$ and $a_i\in\{0,1\}$ for $|i|=1$. We claim that $(t_i/t_0 \otimes 1)(T \otimes v_\lambda) \in N$ for any $i\in I$. To do this we induct on $i$.
First look at the case $i = 1$. We have
\[e_{10}\cdot (t_0^{a_0}S\otimes v_\lambda) = (t_1\partial_0 \otimes 1 - \sum_{j}\frac{t_j}{t_0} \otimes e_{1j})(t_0^{a_0}S \otimes v_\lambda) = a_0t_1t_0^{a_0-1}S \otimes v_\lambda- t_1t_0^{a_0-1}S \otimes \lambda_1 v_\lambda.\]
which means $(t_1/t_0\otimes 1)(T\otimes v_\lambda)$ is in $N$ as long as $a_0 - \lambda_1\ne 0$. But since $a-\lambda_1$ is not a nonnegative integer, and $a-a_0$ is a nonnegative integer, $a_0-\lambda_1\ne 0$.
Thus the base case is proved.
For general $i \ne 0$, note that
\begin{align*}
    e_{i0} \cdot (t_0^{b}S \otimes v_\lambda) &= \left(t_i\partial_0 \otimes 1 - \sum_j (-1)^{|i||j|}\frac{t_j}{t_0}\otimes e_{ij}\right)(t_0^{b}S \otimes v_\lambda)\\ &= bt_it_0^{b-1}S \otimes v_\lambda - (-1)^{|i|}\lambda_it_it_0^{b-1}S \otimes v_\lambda\\ &\phantom{=}- \sum_{j < i} (-1)^{|i||j| + (|i|+|j|)(|t_0^bS|)}t_jt_0^{b-1}S \otimes (e_{ij} \cdot v_\lambda).
\end{align*}
In addition, for $0<j<i$, if $|j| = 0$, we have
\begin{align*}
    e_{ij} \cdot (t_jt_0^{b-1}S \otimes v_\lambda) &= (t_i\partial_j \otimes 1)(t_jt_0^{b-1}S \otimes v_\lambda) + (1\otimes e_{ij})(t_jt_0^{b-1}S \otimes v_\lambda)\\
    &=(1+a_j) t_it_0^{b-1}S \otimes v_\lambda + (-1)^{|i||j| + |j| + (|i|+|j|)(|t_0^bS|)} (t_jt_0^{b-1}S \otimes (e_{ij}\cdot v_\lambda))\\
    &= (1+a_j) t_it_0^{b-1}S \otimes v_\lambda + (-1)^{|i||j| + (|i|+|j|)(|t_0^bS|)} (t_jt_0^{b-1}S \otimes (e_{ij}\cdot v_\lambda)).
\end{align*}
If $|j| = 1$, then
\begin{align*}
    -e_{ij} \cdot (t_jt_0^{b-1}S \otimes v_\lambda) &= -(t_i\partial_j \otimes 1)(t_jt_0^{b-1}S \otimes v_\lambda) - (1\otimes e_{ij})(t_jt_0^{b-1}S \otimes v_\lambda)\\
    &=-(1-a_j) t_it_0^{b-1}S \otimes v_\lambda - (-1)^{|i||j| + |j| + (|i|+|j|)(|t_0^bS|)} (t_jt_0^{b-1}S \otimes (e_{ij}\cdot v_\lambda))\\
    &= -(1-a_j) t_it_0^{b-1}S \otimes v_\lambda + (-1)^{|i||j| + (|i|+|j|)(|t_0^bS|)} (t_jt_0^{b-1}S \otimes (e_{ij}\cdot v_\lambda)).
\end{align*}
since if $a_j = 0$, then $(t_i\partial_j)(t_jt_0^{b-1}S) = t_it_0^{b-1}$ and if $a_j=1$, then $t_jt_0^{b-1}S=0$ since $t_j^2 = 0$.
Also, both of these are in $N$ because $t_jt_0^{b-1}S\otimes v_\lambda\in N$, from the inductive hypothesis.
Adding these equations (for all $j<i$) to the first gives us that
\[\left(b+\sum_{\substack{j<i\\ |j| = 0}}(1+a_j) + \sum_{\substack{j<i\\ |j| = 1}} (a_j-1) - (-1)^{|i|}\lambda_i\right)t_it_0^{b-1}S\otimes v_\lambda \in N.\]

Now we show that the expression in parentheses (call it $A$) is nonzero. Note
\[A = b+\sum_{j<i}a_j + f(i)-(-1)^{|i|}\lambda_i =  b+\sum_{j\in I}a_j +f(i) - (-1)^{|i|}\lambda_i- K  = a+f(i) - (-1)^{|i|}\lambda_i- K,\]
where $K$ is some nonnegative integer. Since $a+f(i)-(-1)^{|i|}\lambda_i$ is not a nonnegative integer, it follows that $A \ne 0$
so $t_it_0^{b-1}S\otimes v_\lambda\in N$.

Now, we show that $p\otimes (e_{ij}\cdot v_\lambda)\in N$, for any $p\in \mathcal{F}_a$ and $i,j\in I$. We have
\[e_{ij}\cdot (p\otimes v_\lambda) = (t_i\partial_j)(p)\otimes v_\lambda + p\otimes (e_{ij}\cdot v_\lambda) + \delta_{ij}(-1)^{|i||j|}R(p\otimes v_\lambda),\]
and this element is in $N$. Now $R$ acts as $r \mathrm{Id}$ (see the discussion preceding (\ref{weight})), so $R(p\otimes v_\lambda)\in N$ and $(t_i\partial_j)(p)\otimes v_\lambda\in N$ from the first part of the proof. Thus $p\otimes (e_{ij}\cdot v_\lambda) \in N$ for all $p\in \mathcal{F}_a$ and $i,j\in I$.
Repeating this process multiple times shows that $p\otimes (e_{i_1 j_1}e_{i_2j_2}\cdots e_{i_kj_k}\cdot v_\lambda)\in N$ for any $k$. Since $v_\lambda$ generates $L(\lambda)$, this shows $p\otimes w\in N$ for any $p\in \mathcal{F}_a$ and $w\in L(\lambda)$. Thus $N = \mathcal{F}_a \otimes L(\lambda)$.
\end{proof}
Using the following theorem, we can use the previous theorem to find conditions when $\mathcal{F}_a\otimes L(\lambda)\cong L(\widetilde{\lambda})$.
\begin{thm}\label{thm: inflate}
    For a weight $\lambda$ of $\mathfrak{gl}(m|n)$, we have $\mathcal{F}_a\otimes L(\lambda)\cong L(\widetilde{\lambda})$ as $\mathfrak{gl}(m+1|n)$-modules if and only if $\mathcal{F}_a\otimes L(\lambda)$ is generated by the highest weight vector $t_0^a \otimes v_\lambda$.
\end{thm}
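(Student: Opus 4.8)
The plan is to prove the two implications separately, using Proposition~\ref{prop: only highest weight vector} as the linchpin. For the forward direction, suppose $\mcF_a\otimes L(\lambda)\cong L(\widetilde\lambda)$. Since $L(\widetilde\lambda)$ is a simple highest weight module, it is generated by any of its highest weight vectors; by Proposition~\ref{prop: only highest weight vector} the space $\mcF_a\otimes L(\lambda)$ has a unique highest weight vector up to scaling, namely $t_0^a\otimes v_\lambda$, so under the isomorphism this vector must correspond to a highest weight vector of $L(\widetilde\lambda)$ and hence generates. (One should note the weight computation \eqref{weight} already shows $t_0^a\otimes v_\lambda$ has weight $\widetilde\lambda$, so this is consistent.) That settles one direction essentially for free.

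For the converse, assume $\mcF_a\otimes L(\lambda)$ is generated by $t_0^a\otimes v_\lambda$. Then $\mcF_a\otimes L(\lambda)$ is a highest weight module of highest weight $\widetilde\lambda$, so it is a quotient of the Verma module $M(\widetilde\lambda)$ and therefore admits $L(\widetilde\lambda)$ as its unique simple quotient; equivalently, it has a unique maximal proper submodule $K$. It remains to show $K=0$, i.e. that $\mcF_a\otimes L(\lambda)$ is already simple. The key point is that any nonzero submodule of a highest weight module contains a highest weight vector: indeed, take a nonzero submodule $V\subseteq \mcF_a\otimes L(\lambda)$, pick a nonzero weight vector in $V$ whose weight is maximal among weights of $V$ with respect to the standard partial order (such exists since the weights of a highest weight module lie in $\widetilde\lambda - \Z_{\geq 0}\Phi^+$, which is bounded above), and that vector is killed by all $e_{ij}$ with $i<j$ in $\hat I$, hence is a highest weight vector. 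By Proposition~\ref{prop: only highest weight vector} the only highest weight vector is $t_0^a\otimes v_\lambda$ (up to scaling), so $t_0^a\otimes v_\lambda\in V$; but $t_0^a\otimes v_\lambda$ generates the whole module by hypothesis, so $V=\mcF_a\otimes L(\lambda)$. Hence the module is simple, and being a highest weight module of highest weight $\widetilde\lambda$ it must equal $L(\widetilde\lambda)$.

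The main obstacle is the standard "maximal-weight-vector argument" in the super setting: one needs that $\mcF_a\otimes L(\lambda)$, as a $\gl(m+1|n)$-module, has all its weights contained in a set of the form $\mu_0 - \Z_{\geq 0}\Phi^+$ for a single weight $\mu_0$, so that a maximal weight exists in any nonzero submodule. This follows because $\mcF_a\otimes L(\lambda)$ is a highest weight module once we know it is generated by $t_0^a\otimes v_\lambda$ — applying elements of $U(\n^-_{m+1|n})$ to a highest weight vector only lowers the weight by nonnegative combinations of positive roots — so the argument is not circular; but it is worth spelling out the weight bound carefully, since $\mcF_a$ itself is infinite-dimensional (it involves $t_0^{\pm 1}$) and one must check that tensoring does not produce weights outside the expected cone. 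Everything else is formal: the uniqueness input is entirely supplied by Proposition~\ref{prop: only highest weight vector}, and no delicate typicality or central-character analysis is needed here.
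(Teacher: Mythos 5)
Your argument is correct, but it follows a genuinely different route from the paper's. For the hard direction the paper argues by explicit computation: starting from an arbitrary nonzero element $x$, it repeatedly applies the operators $e_{0j}$ (which act as $t_0\partial_j$) to decrease a degree statistic $S(x)$ until it reaches a nonzero element of the form $t_0^a\otimes w$, and then uses the operators $e_{ij}$ together with the simplicity of $L(\lambda)$ to show $t_0^a\otimes v_\lambda$ lies in the submodule generated by $x$; Proposition~\ref{prop: only highest weight vector} is invoked only to reduce ``$\mcF_a\otimes L(\lambda)\cong L(\widetilde{\lambda})$'' to ``$\mcF_a\otimes L(\lambda)$ is simple.'' You instead run the standard structural argument: any nonzero submodule of a weight module whose weights lie in $\widetilde{\lambda}-\ZZ_{\ge 0}\Phi^+$ contains a singular vector (maximal-weight argument), and the uniqueness statement of Proposition~\ref{prop: only highest weight vector} then forces that singular vector to be $t_0^a\otimes v_\lambda$, which generates by hypothesis. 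Both proofs are valid. Your version is shorter and makes the role of Proposition~\ref{prop: only highest weight vector} more transparent, but it silently uses two standard facts that you should record: that a submodule of an $\mathfrak{h}_{m+1|n}$-semisimple module is again a weight module (so ``pick a weight vector in $V$'' is legitimate), and the cone bound on the weights, which you correctly note follows from the generation hypothesis (and in fact holds unconditionally, since the weight of $t_0^{a_0}t_1^{k_1}\cdots t_{\bar{n}}^{k_{\bar{n}}}\otimes w_\mu$ differs from $\widetilde{\lambda}$ by $\sum_i k_i(\delta_0-\delta_i)+(\lambda-\mu)\in\ZZ_{\ge 0}\Phi^+$), so no circularity arises. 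The paper's computational proof avoids these weight-theoretic preliminaries entirely and is self-contained, at the cost of a longer explicit reduction; your proof trades that computation for standard highest-weight formalism.
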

\begin{proof}
    First note that by Proposition \ref{prop: only highest weight vector}, showing $\mathcal{F}_a\otimes L(\lambda)\cong L(\widetilde{\lambda})$ is equivalent to showing $\mathcal{F}_a\otimes L(\lambda)$ is simple. Let $N$ denote the submodule of $\mathcal{F}_a \otimes L(\lambda)$ generated by $t_0^a \otimes v_\lambda$. If $N\ne \mathcal{F}_a\otimes L(\lambda)$, it is obvious that $\mathcal{F}_a\otimes L(\lambda)$ cannot be simple. Now we prove the ``if'' direction.

    Since $N=\mathcal{F}_a\otimes L(\lambda)$, we know $t_0^a\otimes v_\lambda$ generates $\mathcal{F}_a\otimes L(\lambda)$. Let $x$ be a nonzero element of $\mathcal{F}_a\otimes L(\lambda)$. Let $N(x)$ be the module generated by $x$. We will show that $t_0^a\otimes v_\lambda\in N(x)$, so $N(x)=\mathcal{F}_a\otimes L(\lambda)$, thereby showing $\mathcal{F}_a\otimes L(\lambda)$ is simple, since $x$ is an arbitrary element.

    Write
    \begin{align}\label{eqn: pure tensor decomposition}
        x = c_1p_1\otimes w_1+\cdots c_kp_k\otimes w_k,
    \end{align}
    where \[p_c=t_0^{a-k_{1,c}-k_{2,c}-\cdots - k_{m,c}-k_{\bar{1},c}-\cdots - k_{\bar{n},c}}t_1^{k_{1,c}}\cdots t_n^{k_{m,c}} t_{\bar{1}}^{k_{\bar{1},c}}\cdots t_{\bar{n}}^{k_{\bar{n},c}},\] where $k_{1,c},\ldots, k_{m,c}\in \mathbb{Z}_{\ge 0}$ and $k_{\bar{1},c},\ldots, k_{\bar{n},c}\in\{0,1\}$, $w_c = e_{i_{1,c}j_{1,c}}e_{i_{2,c}j_{2,c}}\cdots e_{i_{d,c}j_{d,c}}\cdot v_\lambda$, and finally $c_1,\ldots c_k\in \mathbb{C}_{\ne 0}$. We claim that a nonzero element of the form 
    \begin{align}\label{eqn: nice element}
        x' = c_1't_0^a\otimes w_1'+\cdots c_{k'}'t_0^a\otimes w_{k'}'
    \end{align}
    exists in $N(x)$. For $x$ in the form (\ref{eqn: pure tensor decomposition}), we define
    \begin{align}\label{eqn: sum}
    S(x) =\sum_{i\in I}\sum_{\ell=1}^k k_{i,\ell}.
    \end{align}
    Note $S(x)$ is defined uniquely for all $x$ and $S(x)$ is a nonnegative integer. Additionally, $S(x')=0$ if and only if $x'$ is in the form (\ref{eqn: nice element}). If $S(x) = 0$, we are done.
    Otherwise, there exists $k_{j,\ell}>0$ for some $j,\ell$. Applying $e_{0j}$ to $x$ (where we have $e_{0j}\cdot x = (t_0\partial_j)x$), we see that $S(e_{0j}\cdot x)< S(x)$, so $S(\cdot)$ decreases under this action. Also, note that $e_{0j}\cdot x$ is nonzero since $k_{j,\ell}>0$. If $S(e_{0j}\cdot x)=0$, we are done. Otherwise we can do the same process to $e_{0j}\cdot x$, finding $e_{0j'} e_{0j}\cdot x\ne 0$ with $S(e_{0j'} e_{0j}\cdot x)< S(e_{0j}\cdot x)$. If $S(e_{0j'} e_{0j}\cdot x)=0$ we are done. Otherwise we can just keep continuing this process, which must terminate since $S$ is always an integer.    
    Then it follows that we can find $x' \in N(x)$ which can be written as~(\ref{eqn: nice element}). We can then write $x'$ as
    \[x' = t_0^a\otimes w,\]
    where $w\in \mathcal{F}_a\otimes L(\lambda)$ and $w\ne 0$.

    Next, we show that if $t_0^a\otimes w'\in N(x)$, then $t_0^a\otimes (e_{ij}\cdot w')\in N(x)$, for $i,j\in I$. We have
    \begin{align*}
        e_{ij}\cdot (t_0^a\otimes w') &= (t_i\partial_j)\cdot (t_0^a\otimes w') + (1\otimes e_{ij})\cdot (t_0^a\otimes w') + \delta_{ij}(-1)^{|i||j|}R\cdot (t_0^a\otimes w')\\
        &= \pm t_0^a\otimes (e_{ij}\cdot w') + s(t_0^a\otimes w')
    \end{align*}
    for a constant $s\in \mathbb{C}$. Subtracting $s(t_0^a\otimes w')$, we get that $t_0^a\otimes (e_{ij}\cdot w')\in N(x)$.

    Let $N'(w)$ be the submodule generated by $w$ in $L(\lambda)$.
    Since $x' = t_0^a\otimes w\in N(x)$, from the proceeding argument, it follows that $t_0^a\otimes N'(w)\subset N(x)$. But since $L(\lambda)$ is simple, it follows that $N'(w)=L(\lambda)$. In particular, $v\in N'(w)$, so $t_0^a\otimes v_\lambda\in N(x)$. Thus $N(x)=\mathcal{F}_a\otimes L(\lambda)$, so $\mathcal{F}_a\otimes L(\lambda)$ is simple.
\end{proof}

\begin{cor}\label{cor: combine thms}
    Let $\lambda$ be a $\mathfrak{gl}(m|n)$-weight. If $(\widetilde{\lambda}+\rho_{m+1|n}, \delta_0 - \delta_i)\not\in \mathbb{Z}_{>0}$ for $1\le i\le m$ and $(\widetilde{\lambda}+\rho_{m+1|n}, \delta_0 - \delta_i)\not\in \mathbb{Z}_{\ge 0}$ for $\bar{1}\le i \le \bar{n}$, then $\mathcal{F}_a\otimes L(\lambda)$ is simple, which implies $\mathcal{F}_a\otimes L(\lambda)\cong L(\widetilde{\lambda})$. 
\end{cor}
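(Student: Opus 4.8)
The plan is to observe that, once the definitions of $\widetilde{\lambda}$, $\rho_{m+1|n}$, and the bilinear form $(\cdot,\cdot)$ are unwound, the hypothesis of this corollary is \emph{verbatim} the hypothesis of Theorem~\ref{thm: when the highest weight vector generates the module}; the corollary then follows by chaining Theorems~\ref{thm: when the highest weight vector generates the module} and~\ref{thm: inflate} together with Proposition~\ref{prop: only highest weight vector}.

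First I would compute the pairings $(\widetilde{\lambda}+\rho_{m+1|n},\delta_0-\delta_i)$ for each $i\in I$ directly from the formulas. Using $\widetilde{\lambda}=(a+r,\lambda_1+r,\dots,\lambda_m+r,\lambda_{\bar 1}-r,\dots,\lambda_{\bar n}-r)$, the value $\rho_{m+1|n}=(m+1,m,\dots,1,-1,\dots,-n)$, and $(\delta_k,\delta_\ell)=(-1)^{|k||\ell|}\delta_{k\ell}$, a short computation gives
\[
(\widetilde{\lambda}+\rho_{m+1|n},\delta_0-\delta_i)=a-\lambda_i+i,\qquad
(\widetilde{\lambda}+\rho_{m+1|n},\delta_0-\delta_{\bar j})=a+\lambda_{\bar j}+m+1-j,
\]
valid for $1\le i\le m$ and $1\le j\le n$ respectively. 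The one point to verify with care is that the scalar $r$ by which $R$ acts on $\mcF_a\otimes L(\lambda)$ (see the discussion preceding~\eqref{weight}) drops out of every such pairing: in the even case because $\delta_0$ and $\delta_i$ both have norm $+1$ while the two $r$-shifts carry the same sign, and in the odd case because $(\delta_{\bar j},\delta_{\bar j})=-1$ exactly cancels the $-r$ in the $\bar j$-coordinate of $\widetilde{\lambda}$. In particular these pairings, and hence the condition in the corollary, are independent of $R$.

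Next I would match these with the numbers $a+f(i)-(-1)^{|i|}\lambda_i$ occurring in Theorem~\ref{thm: when the highest weight vector generates the module}. Since $f(i)=i-1$ for $1\le i\le m$ and $f(\bar j)=m+1-j$, one finds
\[
a+f(i)-(-1)^{|i|}\lambda_i=(\widetilde{\lambda}+\rho_{m+1|n},\delta_0-\delta_i)-1,\qquad
a+f(\bar j)-(-1)^{|\bar j|}\lambda_{\bar j}=(\widetilde{\lambda}+\rho_{m+1|n},\delta_0-\delta_{\bar j}),
\]
again for $1\le i\le m$ and $1\le j\le n$. Therefore ``$a+f(i)-(-1)^{|i|}\lambda_i$ is not a nonnegative integer for all $i\in I$'' is equivalent to the stated conditions $(\widetilde{\lambda}+\rho_{m+1|n},\delta_0-\delta_i)\notin\ZZ_{>0}$ for $1\le i\le m$ and $(\widetilde{\lambda}+\rho_{m+1|n},\delta_0-\delta_i)\notin\ZZ_{\ge 0}$ for $\bar 1\le i\le\bar n$, the shift by $1$ in the even case being precisely what turns $\ZZ_{\ge 0}$ into $\ZZ_{>0}$.

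Finally, with the hypothesis of Theorem~\ref{thm: when the highest weight vector generates the module} thereby verified, that theorem shows $\mcF_a\otimes L(\lambda)$ is a highest weight module; by Proposition~\ref{prop: only highest weight vector} it is then generated by $t_0^a\otimes v_\lambda$, so Theorem~\ref{thm: inflate} yields $\mcF_a\otimes L(\lambda)\cong L(\widetilde{\lambda})$, and in particular $\mcF_a\otimes L(\lambda)$ is simple. The argument is in essence a change of variables; the only real obstacle is the sign bookkeeping in the bilinear form together with the check that the $r$-shift cancels, which is the single step I would verify carefully.
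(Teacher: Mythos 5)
Your proposal is correct and follows essentially the same route as the paper's proof: both verify that, after the $r$-shifts cancel in the pairings, the conditions $(\widetilde{\lambda}+\rho_{m+1|n},\delta_0-\delta_i)\notin\ZZ_{>0}$ (even $i$) and $\notin\ZZ_{\ge 0}$ (odd $i$) are exactly the hypothesis $a+f(i)-(-1)^{|i|}\lambda_i\notin\ZZ_{\ge 0}$ of Theorem~\ref{thm: when the highest weight vector generates the module}, and then conclude via Theorem~\ref{thm: inflate}. Your explicit computation of the pairings and of the cancellation of $r$ is accurate, and the extra appeal to Proposition~\ref{prop: only highest weight vector} is harmless.
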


\begin{proof}
    We show that the conditions given are equivalent to the conditions in Theorem~\ref{thm: when the highest weight vector generates the module}. Letting $\rho = \rho_{m|n}$, note that $f(i)+1 = \rho_0 - \rho_i$ for $1\le i\le m$ and $f(i) = \rho_0 + \rho_i$ for $\bar{1}\le i\le \bar{n}$, where $\rho = \rho_{m+1|n}$. It follows that the condition $a+f(i)-(-1)^{|i|}\lambda_i$ not being a nonnegative integer is equivalent to the conditions stated in the corollary. Then the result follows by Theorem \ref{thm: inflate} and Theorem \ref{thm: when the highest weight vector generates the module}.
\end{proof}

Let us recall the following fact: if $\lambda$ is both antidominant and typical, then $M(\lambda)$ is simple, see for reference \cite{Gorelik_2001a} or \cite{Gorelik_2001b}. Our result will be a consequence of Corollary \ref{cor: combine thms}.

\begin{cor}\label{cor: verma}
    Let $\mathcal{AW}_{m|n}(a)$ denote the set of all typical antidominant $\mathfrak{gl}(m|n)$-weights $\lambda$ such that $\lambda_{i}-(-1)^{|i|}a\not\in \mathbb{Z}$ for $i\in I$. If $\lambda \in \mathcal{AW}_{m|n}(a)$, then $\mathcal{F}_a\otimes M(\lambda)$, considered as a $\mathfrak{gl}(m+1|n)$-representation, is isomorphic to $M(\widetilde{\lambda})$.
\end{cor}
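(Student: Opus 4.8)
The plan is to reduce the statement to Corollary \ref{cor: combine thms} together with the cited simplicity criterion for Verma modules. First I would observe that since $\lambda \in \mcA\mcW_{m|n}(a)$ is antidominant and typical, the $\gl(m|n)$-Verma module $M(\lambda)$ is already simple, i.e. $M(\lambda) = L(\lambda)$; hence $\mcF_a \otimes M(\lambda) = \mcF_a \otimes L(\lambda)$ as $\gl(m+1|n)$-modules via $\varphi$. Next I would verify that the arithmetic hypothesis $\lambda_i - (-1)^{|i|}a \not\in \ZZ$ for all $i \in I$ forces the condition appearing in Theorem \ref{thm: when the highest weight vector generates the module}: indeed $a + f(i) - (-1)^{|i|}\lambda_i$ differs from $-(-1)^{|i|}(\lambda_i - (-1)^{|i|}a)$ by the integer $f(i)$, so it is never an integer, in particular never a nonnegative integer. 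By Theorem \ref{thm: when the highest weight vector generates the module} and Theorem \ref{thm: inflate}, this yields $\mcF_a \otimes L(\lambda) \cong L(\widetilde\lambda)$.

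It then remains to upgrade $L(\widetilde\lambda)$ to the Verma module $M(\widetilde\lambda)$, for which I would invoke the same cited criterion at the level of $\gl(m+1|n)$: it suffices to check that $\widetilde\lambda$ is antidominant and typical as a $\gl(m+1|n)$-weight, since then $M(\widetilde\lambda)$ is simple and equals $L(\widetilde\lambda)$. Recall from \eqref{weight} that $\widetilde\lambda = (a+r, \lambda_1 + r, \dots, \lambda_m + r, \lambda_{\bar 1} - r, \dots, \lambda_{\bar n} - r)$, which up to the uniform shift by $r$ on the even part and $-r$ on the odd part is just $\lambda$ with a new zeroth coordinate $a + r$. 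For antidominance, the even coroot pairings $\langle \widetilde\lambda + \rho_{m+1|n}, \alpha^\vee\rangle$ with $\alpha \in \Phi^+_{\ov 0}$ split into two families: those supported on indices in $I$, which coincide with the corresponding pairings for $\lambda$ (the shift by $r$ and the passage from $\rho_{m|n}$ to $\rho_{m+1|n}$ contribute the same constant to both slots and cancel), and hence lie outside $\ZZ_{>0}$ by antidominance of $\lambda$; and those involving the index $0$ paired against $\delta_0 - \delta_i$ for $1 \le i \le m$, which equal $(\widetilde\lambda + \rho_{m+1|n}, \delta_0 - \delta_i)$ and are non-integers (hence not in $\ZZ_{>0}$) because $a - \lambda_i + (\text{integer})$ is a non-integer by hypothesis. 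Similarly, for typicality one checks $(\widetilde\lambda + \rho_{m+1|n}, \alpha) \ne 0$ for every positive isotropic odd root $\alpha = \delta_i - \eps_j$: if $1 \le i \le m$ this pairing equals the corresponding pairing for $\lambda$ (again the $r$-shifts cancel since the odd slot is shifted by $-r$), nonzero by typicality of $\lambda$; if $i = 0$ it is $(\widetilde\lambda + \rho_{m+1|n}, \delta_0 - \eps_j) = a - \lambda_{\bar j} + (\text{integer})$, a non-integer by hypothesis, so in particular nonzero.

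Having established that $\widetilde\lambda$ is antidominant and typical, the cited result gives $M(\widetilde\lambda) = L(\widetilde\lambda) \cong \mcF_a \otimes L(\lambda) = \mcF_a \otimes M(\lambda)$, completing the proof. The main obstacle I anticipate is purely bookkeeping: carefully tracking how the scalar $r$ and the difference $\rho_{m+1|n} - \rho_{m|n}$ interact with each coroot/root pairing so that the "interior" pairings genuinely reduce to those for $\lambda$ while the pairings against $0$-indexed roots reduce to the non-integrality hypothesis — once the $r$-cancellation is pinned down, everything else is a direct translation between the two simplicity criteria and no new idea is needed. (One should also note the hypothesis $m+1 \ne n$ is not required here, since we only use $\varphi_R$ for $R$ acting as the scalar $r$, not the distinguished $\varphi_{R_1}$.)
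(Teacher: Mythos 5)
Your proposal is correct and follows essentially the same route as the paper: use antidominance and typicality to identify $M(\lambda)=L(\lambda)$, verify the non-integrality conditions to get $\mcF_a\otimes L(\lambda)\cong L(\widetilde{\lambda})$, and then check that $\widetilde{\lambda}$ is antidominant and typical so that $L(\widetilde{\lambda})=M(\widetilde{\lambda})$. The only cosmetic difference is that you verify the hypothesis of Theorem \ref{thm: when the highest weight vector generates the module} directly and invoke it together with Theorem \ref{thm: inflate}, whereas the paper routes the same check through Corollary \ref{cor: combine thms}, which is just the packaged form of those two theorems.
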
 
\begin{proof}
    Since $\lambda$ is antidominant and typical, $L(\lambda)=M(\lambda)$. 
    For $1\le i<j\le m$ or $\bar{1}\le i<j\le \bar{n}$, we have $(\widetilde{\lambda}+\rho_{m+1|n})_i - (\widetilde{\lambda}+\rho_{m+1|n})_j = (\lambda+\rho_{m|n})_i-(\lambda+\rho_{m|n})_j\not\in \mathbb{Z}_{>0}$. Additionally, note that for $1\le i\le m$, $(\widetilde{\lambda} + \rho_{m+1|n})_0 - (\widetilde{\lambda} + \rho_{m+1|n})_i = a + i - \lambda_i \not\in \mathbb{Z}_{>0}$ since $a-\lambda_i \not\in \mathbb{Z}$. Thus $\widetilde{\lambda}$ is antidominant. 

    For $1\le i\le m$ and $\bar{1}\le j\le \bar{n}$, we have 
    \[(\widetilde{\lambda} + \rho_{m|n},\delta_i-\delta_j) =(\widetilde{\lambda}+\rho_{m+1|n})_i + (\widetilde{\lambda}+\rho_{m+1|n})_j = (\lambda+\rho_{m|n})_i + (\lambda+\rho_{m|n})_j \ne 0\]
    since $\lambda$ is typical. Finally, for $\bar{1}\le j\le \bar{n}$, we have
    \[(\widetilde{\lambda} + \rho_{m|n},\delta_0-\delta_j) =(\widetilde{\lambda}+\rho_{m+1|n})_0 + (\widetilde{\lambda}+\rho_{m+1|n})_j = a+m+1 + \lambda_j -j \ne 0\]
    because $a+\lambda_j$ is not an integer. Therefore $\widetilde{\lambda}$ is typical as well as antidominant.
   
    Thus $M(\widetilde{\lambda})=L(\widetilde{\lambda})$ is simple. Then note that $\lambda$ satisfies the conditions in Corollary \ref{cor: combine thms}, so the result follows.
\end{proof}
Now suppose $\xi\in \mathfrak{h}_{m+1|n}^*$ such that $(\xi+\rho_{m+1|n}, \alpha^\vee) \not\in \mathbb{Z}_{> 0}$ for $\alpha \in \Phi_{\overline{0}}^+$ and $(\xi+\rho_{m+1|n}, \alpha)\not\in \mathbb{Z}_{\ge 0}$ for $\alpha \in \Phi_{\overline{1}}^+$. Note that the first condition is simply that $\xi$ is antidominant and the second implies $\xi$ is typical. Then inductively apply Corollary~\ref{cor: combine thms} to use the modules $\mathcal{F}_a$ to ``build'' the representation $L(\xi)$ for $\xi \in \mathfrak{h}_{m+1|n}^*$. The conditions on $\xi$ allow us to apply Corollary \ref{cor: combine thms} iteratively (with $R=0$) to get a map
\begin{align*}
    U(\mathfrak{gl}(m+1|n)) \rightarrow \mathcal{D}'(m|n)\otimes \cdots \otimes \mathcal{D}'(0|n)\otimes \mathcal{D}'(n-1|0) \otimes \cdots \otimes \mathcal{D}'(0|0)
\end{align*}
that gives us a $\mathfrak{gl}(m+1|n)$-representation
\[\mathscr{F}(\xi) \coloneqq \mathcal{F}_{\xi_0}\otimes \mathcal{F}_{\xi_1}\otimes \cdots \otimes \mathcal{F}_{\xi_{\bar{n}}}\]
that is isomorphic to $L(\xi)$.
(Note that when constructing the chain have to use the obvious isomorphism from $U(\mathfrak{gl}(0|n))$ to $U(\mathfrak{gl}(n|0))$ given by $e_{\bar{i}\bar{j}}\mapsto e_{ij}$).  
But $\xi$ is antidominant and typical, so $\mathscr{F}(\xi)$ is isomorphic to $M(\xi)$ as well. This applies to substantially more general setting than \ref{cor: verma}, and motivates the following conjecture:
\begin{conj}
There is an isomorphism of $U(\mathfrak{gl}(m+1|n))$-modules $\mathscr{F}(\xi)\cong M(\xi)$ for \textit{all} weights $\xi \in \mathfrak{h}_{m+1|n}^*$.
\end{conj}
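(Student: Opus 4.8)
The plan is to split the conjecture into a character identity and a cyclicity statement and attack them separately. First I would note that, exactly as in Proposition~\ref{prop: only highest weight vector}, $\mathscr{F}(\xi)$ has a unique highest weight vector up to scalar, namely $v_\xi \coloneqq t_0^{\xi_0}\otimes t_0^{\xi_1}\otimes\cdots\otimes t_0^{\xi_{\bar n}}$, and because every tensor factor is built with $R=0$ there are no $r$-shifts, so $\mathrm{wt}(v_\xi)=\xi$; the universal property then furnishes a canonical map $\phi_\xi\colon M(\xi)\to\mathscr{F}(\xi)$. If I can show (i) $\mathrm{ch}\,\mathscr{F}(\xi)=\mathrm{ch}\,M(\xi)$ and (ii) $v_\xi$ generates $\mathscr{F}(\xi)$, then $\phi_\xi$ is a surjection between modules with equal finite-dimensional weight spaces, hence an isomorphism; equivalently, $\phi_\xi$ is an isomorphism iff it is surjective iff it is injective (using $M(\xi)\cong U(\mathfrak{n}^-_{m+1|n})$), so the entire content reduces to cyclicity.

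\textbf{The character identity (the routine half).} I expect this to be unconditional. Peel off tensor factors one at a time: passing from a module $X$ over the smaller algebra to $\mcF_a\otimes X$ over the next one (with $R=0$), a basis of $\mcF_a$ is indexed by exponent tuples $(k_j)$ over the current index set ($k_j\in\ZZ_{\ge0}$ if $|j|=0$, $k_j\in\{0,1\}$ if $|j|=1$), and the formulas of Theorem~\ref{thm: homomorphism} give $\mathrm{wt}\bigl(t_0^{\,a-\sum_j k_j}\prod_j t_j^{k_j}\otimes x\bigr)=a\delta_0+\mathrm{wt}(x)-\sum_j k_j(\delta_0-\delta_j)$. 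Summing the geometric/binomial series yields
\[
\mathrm{ch}(\mcF_a\otimes X)=e^{a\delta_0}\Bigl(\,\prod_{|j|=0}\tfrac{1}{1-e^{-(\delta_0-\delta_j)}}\Bigr)\Bigl(\,\prod_{|j|=1}\bigl(1+e^{-(\delta_0-\delta_j)}\bigr)\Bigr)\,\mathrm{ch}(X),
\]
the products over the current index set with the distinguished element removed. The roots $\delta_0-\delta_j$ here are precisely the ``column'' of $\Phi^+$ through the index being split off, and running over all $m+1+n$ peeling steps these columns partition $\Phi^+$ (even into $\Phi^+_{\oo}$, odd into $\Phi^+_{\one}$ --- the point to be careful with is the relabelling $\gl(0|n)\cong\gl(n|0)$ and the accompanying parity bookkeeping). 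Multiplying and recognising the super denominator identity gives $\mathrm{ch}\,\mathscr{F}(\xi)=e^{\xi}\prod_{\alpha\in\Phi^+_{\oo}}(1-e^{-\alpha})^{-1}\prod_{\alpha\in\Phi^+_{\one}}(1+e^{-\alpha})=\mathrm{ch}\,M(\xi)$ for all $\xi$.

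\textbf{Cyclicity --- the main obstacle, and why the statement needs amending.} The structural observation I would use is that all the $\mathscr{F}(\xi)$ live on one fixed vector space $V$ (basis indexed by exponent tuples, $\xi$-independent), each $e_{ij}$ acts by matrix coefficients polynomial (affine) in $\xi$, and the weight of a fixed basis vector is $\xi$ plus a fixed linear functional of its tuple, so the weight-space labelling of $V$ does not move. Under the bijection ``PBW monomial $\prod_\alpha e_{-\alpha}^{m_\alpha}\leftrightarrow$ tuple $(m_\alpha)_{\alpha\in\Phi^+}$'' (both being tuples with $\sum m_\alpha\alpha=-\mu$ and matching parity constraints), $\phi_\xi$ on the $\mu$-weight space is a \emph{square} matrix $D_\mu(\xi)$ with polynomial entries, and $\phi_\xi$ is an isomorphism iff $\det D_\mu(\xi)\neq0$ for every $\mu$. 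Corollary~\ref{cor: combine thms}, applied iteratively (which is how $\mathscr{F}(\xi)$ is built), shows each $\det D_\mu$ is a nonzero polynomial and that $\phi_\xi$ is an isomorphism for $\xi$ in a dense set --- but \emph{not} for all $\xi$: for $\gl(2)$ and $\xi=(c,c)$ one computes $e_{10}\cdot v_\xi=(\xi_0-\xi_1)\,t_1t_0^{c-1}\otimes t_0^{c}=0$, so $v_\xi$ spans a line while $\mathscr{F}((c,c))$ is infinite-dimensional, and thus $\mathscr{F}((c,c))$ --- having no cyclic highest weight vector --- cannot be isomorphic to $M((c,c))$ (one can check it is instead the dual Verma $M((c,c))^\vee$). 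So the conjecture as worded is false, and the realistic target is: \emph{$\phi_\xi$ is an isomorphism whenever $(\xi+\rho_{m+1|n},\alpha^\vee)\notin\ZZ_{>0}$ for all $\alpha\in\Phi^+_{\oo}$ and $(\xi+\rho_{m+1|n},\alpha)\notin\ZZ_{\ge0}$ for all $\alpha\in\Phi^+_{\one}$}, with the general statement amended to $\mathscr{F}(\xi)\cong M(\xi)^\vee$. The hard part will be controlling the $D_\mu(\xi)$ \emph{simultaneously over all $\mu$}: I would aim for an explicit product formula for $\det D_\mu(\xi)$ of Shapovalov/Jantzen type, whose factors are read off from the columns peeled at successive levels --- a plausible route being to track, level by level, the leading coefficient of $e_{j\,l}$ acting on highest-weight-like vectors (precisely the factors $a_0-\lambda_1$, and in general $(\widetilde\lambda+\rho_{m+1|n},\delta_l-\delta_i)$, already appearing in the proofs of Theorems~\ref{thm: when the highest weight vector generates the module} and~\ref{thm: inflate}), assemble them into a triangular-up-to-lower-order presentation of $\phi_\xi$ in the PBW basis, and multiply the diagonal.
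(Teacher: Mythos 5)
First, be aware that the paper offers no proof of this statement: it is a conjecture, supported only by the antidominant--typical--generic case treated via Corollary \ref{cor: combine thms} and by the (asserted) equality of formal supercharacters. Your character computation reproduces that supporting evidence, and your reduction of the conjecture to cyclicity of the highest weight vector $t_0^{\xi_0}\otimes\cdots\otimes t_0^{\xi_{\bar{n}}}$ (using Proposition \ref{prop: only highest weight vector} and equality of finite-dimensional weight spaces) is sound.

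The important point is that your counterexample is correct, so you have in fact refuted the conjecture rather than proved it. For $\gl(2)$, i.e.\ $m=1$, $n=0$, and $\xi=(c,c)$, one has $\mathscr{F}(\xi)=\mcF_c\otimes L((c))$ with $e_{11}$ acting on $L((c))=\CC v$ by $c$; Theorem \ref{thm: homomorphism} with $R=0$ gives $\varphi_0(e_{10})=t_1\partial_0\otimes 1-\frac{t_1}{t_0}\otimes e_{11}$, hence $e_{10}\cdot(t_0^c\otimes v)=c\,t_1t_0^{c-1}\otimes v-c\,t_1t_0^{c-1}\otimes v=0$. Thus the unique highest weight vector of the infinite-dimensional module $\mathscr{F}((c,c))$ generates a one-dimensional submodule, while the highest weight vector of $M((c,c))$ generates everything; since an isomorphism would have to match these two vectors up to scalar (each spans the one-dimensional $(c,c)$-weight space and is the unique highest weight vector of its module), $\mathscr{F}((c,c))\not\cong M((c,c))$. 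This is exactly the degenerate case $a-\lambda_1=0$ that the hypothesis of Theorem \ref{thm: when the highest weight vector generates the module} excludes. The conjecture as stated is therefore false and needs the kind of amendment you describe.

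Two caveats. The positive statement you call the ``realistic target'' is precisely the set of hypotheses the paper already imposes in the paragraph preceding the conjecture, so it is not new; the genuinely new content of your proposal is the counterexample and the suggested correction $\mathscr{F}(\xi)\cong M(\xi)^\vee$. The latter is consistent with the example (there $\mathscr{F}((c,c))$ has simple socle $L((c,c))$ with quotient $L((c-1,c+1))$, matching the dual Verma), but your proposal does not prove it in general: the Shapovalov-type product formula for $\det D_\mu(\xi)$ and the ``triangular up to lower order'' presentation of $\phi_\xi$ in the PBW basis are only sketched, and that is where all the remaining work lies.
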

Further supporting evidence is that the modules have the same formal supercharacters, which is verified in a manner analogous to the proof of Theorem 4.2, page 6 in \cite{GR_2021}.

\section{Images under Harish-Chandra homomorphisms}\label{sec: harish chandra}
In this section, we relate the restriction of $\varphi$ to the center of $U(\mathfrak{gl}(m+1|n))$ with the Harish-Chandra homomorphisms.

For $M\in \{m+1,m\}$, recall $\rho_{M|n} = (M,\ldots, 1, -1, \ldots, -n)$. If $b=(b_1,\ldots, b_M, b_{\bar{1}},\ldots, b_{\bar{n}})\in \mathbb{C}^{M|n}$, then the evaluation homomorphism $\mathrm{ev}_{b}: \mathbb{C}[l_1,\ldots, l_M,l_{\bar{1}},\ldots, l_{\bar{n}}]\to \mathbb{C}$ is defined by \[\mathrm{ev}_b(p) \coloneqq p(b_1,\ldots, b_M,b_{\bar{1}},\ldots, b_{\bar{n}}).\]

Every $z\in Z(U(\mathfrak{gl}(m|n))$ acts on $L(\lambda)$ as $\chi_{\lambda}(z)\mathrm{Id}$, where $\chi_\lambda(z) = \mathrm{ev}_{\lambda+\rho_{m|n}}(\chi_{m|n}(z))$ and $\chi_{m|n}: Z(U(\mathfrak{gl}(m|n)))\to \mathbb{C}[\ell_1,\ldots,\ell_m, \ell_{\bar{1}},\ldots, \ell_{\bar{n}}]^{\mathcal{W}_{m|n}}$ is the Harish-Chandra homomorphism (see Theorem 13.1.1 (a) in \cite{Musson_2012}). We similarly define $\chi_{m+1|n}: Z(U(\mathfrak{gl}(m+1|n)))\to \mathbb{C}[\ell_0,\ldots,\ell_m, \ell_{\bar{1}},\ldots, \ell_{\bar{n}}]^{\mathcal{W}_{m+1|n}}$.

Next, we define $\chi_{0,m|n}: \mathbb{C}[\mathcal{E}] \otimes Z(U(\mathfrak{gl}(m|n)))\to \mathbb{C}[\ell_0] \otimes \mathbb{C}[\ell_1,\ldots, \ell_{\bar{n}}]^{\mathcal{W}_{m|n}}$ by \[\chi_{0,m|n}\left(\sum_{i}\mathcal{E}^i\otimes z_i\right) = \sum_i \ell_0^i\otimes \chi_{m|n}(z_i).\] Set $\ell = \chi_{0,m|n}(R)$. In the case when $R$ is given by (\ref{R1}), we have
\[\ell = -\frac{1}{n+1}\left( \sum_{i \in \hat{I}}\ell_i - \frac{m(m+1)}{2} + \frac{n(n+1)}{2}\right).\]
 
Note that the symmetric superalgebra $S(\mathfrak{h}_{m|n})$ is isomorphic, in a canonical way, to the superalgebra of $\mathcal{W}_{m|n}$-invariant polynomials in $\mathbb{C}[\ell_1, \ldots, \ell_{\bar{n}}]$. Let $I(\mathfrak{h}_{m|n})$ be the subalgebra of $S(\mathfrak{h}_{m|n})$ consisting of all $\mathcal{W}_{m|n}$-invariant functions $\theta$ on $\mathfrak{h}_{m|n}^*$ such that if $\alpha$ is an isotropic odd root and $(\lambda, \alpha)=0$, then $\theta(\lambda)=\theta(\lambda + t\alpha)$ for all $t\in \mathbb{C}$. It is known that $\chi_{m|n}: Z(U(\mathfrak{gl}(m|n)))\to I(\mathfrak{h}_{m|n}) $ is an isomorphism (see Theorem 13.1.1 (b) in \cite{Musson_2012}). Note that this means $\chi_{0, m|n}$, considered as a map from $\mathbb{C}[\mathcal{E}]\otimes Z(U(\mathfrak{gl}(m|n)))$ to $\mathbb{C}[\ell_0]\otimes I(\mathfrak{h}_{m|n}) ,$ is an isomorphism as well.

For the remainder of this section, we consider the module $\mathcal{F}_a$ only when $a$ is an integer. Then note $\mathcal{AW}_{m|n}\coloneqq\mathcal{AW}_{m|n}(a) = \mathcal{AW}_{m|n}(0)$.
\begin{lem}\label{lem: faithful}
    The modules \[\mathcal{F} = \bigoplus_{a\in \mathbb{Z}}\mathcal{F}_a \text{ and } \bigoplus_{\lambda \in \mathcal{AW}_{m|n}}M(\lambda)\] are faithful over $\mathcal{D}'(m|n)$ and $\mathfrak{gl}(m|n)$, respectively.
\end{lem}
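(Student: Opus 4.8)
The plan is to prove the two faithfulness statements separately, since they concern different algebras.

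\textbf{Faithfulness of $\mcF$ over $\mcD'(m|n)$.} First I would note that $\mcD'(m|n)$ is $\ZZ$-graded by $t_0$-degree (the operators $\frac{t_i}{t_0}$ raise $t_0$-degree... actually degree-preserving in total degree, but shift individual $t_i$-degrees), so it suffices to understand how $\mcD'(m|n)$ acts. The key point: each $\mcF_a = \mcD'(m|n)(t_0^a)$ is a cyclic module, and $\mcF = \bigoplus_{a\in\ZZ}\mcF_a$ is (as a vector space) all of $\CC[t_0^{\pm1},t_1,\dots,t_m]\otimes\Lambda_n$. So suppose $D\in\mcD'(m|n)$ annihilates $\mcF$. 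Since $D$ is a polynomial differential operator of the prescribed form, one can write $D$ as a sum of terms $t_0^{e_0}\cdots t_{\bar n}^{e_{\bar n}}\,\partial_0^{f_0}\cdots\partial_{\bar n}^{f_{\bar n}}$ (after normal-ordering using Lemma~\ref{lem: identities for D'(m|n)}). Then I would test $D$ against the monomial basis vectors $t_0^{k_0}\cdots t_{\bar n}^{k_{\bar n}}$ of $\mcF$ and argue by a standard leading-term/lowest-order argument that all coefficients vanish: pick a term of minimal total $\partial$-order among those appearing, apply $D$ to a monomial on which exactly that term survives, and deduce its coefficient is zero; induct. This is the familiar fact that the Weyl-algebra-type action on polynomials is faithful, adapted to the $t_0^{\pm1}$ and exterior variables; the only subtlety is bookkeeping signs for the odd generators and checking that the Laurent variable $t_0$ causes no collapse (it does not, since $\mcF$ contains $t_0$-powers in every integer degree).

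\textbf{Faithfulness of $\bigoplus_{\lambda\in\mcA\mcW_{m|n}}M(\lambda)$ over $\gl(m|n)$.} Here I would use that each such $M(\lambda)$ is simple (antidominant and typical, as recalled just before the lemma), so it is a faithful $U(\gl(m|n))/\ker\chi_\lambda$-module, and the issue is that $\bigcap_{\lambda}\ker\chi_\lambda$ (the intersection over central characters of $\lambda\in\mcA\mcW_{m|n}$) together with the PBW structure forces the intersection of annihilators to be zero. Concretely: suppose $u\in U(\gl(m|n))$ acts as $0$ on every $M(\lambda)$, $\lambda\in\mcA\mcW_{m|n}$. Using the triangular decomposition $U(\gl(m|n)) = U(\n^-)\otimes U(\h)\otimes U(\n^+)$ and acting on the highest weight vectors $v_\lambda$ and their $U(\n^-)$-descendants, one extracts that the ``$U(\n^-)\otimes U(\h)$-component'' of $u$ (the part with no $\n^+$) must vanish as a polynomial function of $\lambda$ on the Zariski-dense set $\mcA\mcW_{m|n}\subseteq\h_{m|n}^*$; density of $\mcA\mcW_{m|n}$ is the crucial input (it is the complement of countably many hyperplanes/integrality conditions, hence Zariski dense). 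Iterating down through the weight spaces (or inducting on the $\n^+$-degree of $u$ by first hitting $v_\lambda$, then $\n^+$-lowered vectors in larger Verma modules) kills all of $u$. I would organize this as: (i) reduce to showing the annihilator is zero; (ii) show $\mcA\mcW_{m|n}$ is Zariski dense in $\h_{m|n}^*$; (iii) run the PBW/weight-space argument, using that $M(\lambda)\cong U(\n^-)$ as an $\n^-$-module so no relations are introduced.

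\textbf{Main obstacle.} The genuinely delicate step is part (iii) of the second statement — showing that vanishing on all the $M(\lambda)$, $\lambda\in\mcA\mcW_{m|n}$, forces $u=0$, i.e.\ converting ``acts as zero on a Zariski-dense family of Verma modules'' into ``is zero in $U(\gl(m|n))$.'' The clean way is: $\bigoplus_\lambda M(\lambda)$ restricted to $U(\n^-)$ contains a free $U(\n^-)$-submodule (generated by the $v_\lambda$), and the $\h$-action on these generators separates points of $\mcA\mcW_{m|n}$, so a nonzero $u$ written in PBW form $\sum f_p(h)\, n^-_p n^+_q$ (ordered) would have to act nontrivially on some $M(\lambda)$ by choosing $\lambda$ outside the finitely many hyperplanes where the relevant polynomial coefficients vanish — legitimate precisely because $\mcA\mcW_{m|n}$ is Zariski dense. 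I expect the write-up to lean on this density (countable union of proper subvarieties) plus the faithfulness of the regular-type representation of $U(\n^-)$ on itself; the superalgebra signs are routine. For the first statement the obstacle is only notational — normal-ordering in $\mcD'(m|n)$ with the Laurent variable and the two-sided exterior variables — and presents no conceptual difficulty.
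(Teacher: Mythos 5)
For the first module your argument is essentially the paper's: normal-order an annihilating operator as a sum of terms $p(t)\,\partial_0^{f_0}\cdots\partial_{\bar{n}}^{f_{\bar{n}}}$ and evaluate on monomials of $\mcF$. Your normalization (take a term of \emph{minimal total} $\partial$-order and evaluate on the matching monomial, so every other term dies because its exponent vector cannot be componentwise $\le$ the chosen one) is in fact the correct one; the paper instead picks the $\partial$-lexicographically maximal term, and its claim that all other terms then annihilate the test monomial fails for terms with componentwise smaller exponents, so your minimal-degree version is the right way to run the same computation. The Laurent variable $t_0$ and the odd variables cause no trouble, as you say.

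For the second module you part ways with the paper, and there your write-up has a genuine gap. The paper does not prove (and does not need) that the annihilator of $\bigoplus_{\lambda\in\mcA\mcW_{m|n}}M(\lambda)$ in all of $U(\gl(m|n))$ is zero: its proof takes $z$ \emph{central}, notes that $z$ acts on $M(\lambda)$ by $\ev_{\lambda+\rho_{m|n}}(\chi_{m|n}(z))$, deduces $\chi_{m|n}(z)=0$ from the largeness of the set of evaluation points, and concludes $z=0$ from the injectivity of $\chi_{m|n}$ recalled just before the lemma; this central-element statement is all that Theorem \ref{thm : HC homomorphism} uses, since the elements compared there lie in $\CC[\mcE]\otimes Z(U(\gl(m|n)))$. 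You instead aim at triviality of the full annihilator via PBW plus Zariski density of $\mcA\mcW_{m|n}$. Your step (iii) is exactly the hard point and is not actually carried out: evaluating $u$ on the highest weight vectors only kills the $\n^+$-free part of its PBW expansion; to reach the terms with a nontrivial $\n^+$-factor you must evaluate on vectors $y\,v_\lambda$ with $y\in U(\n^-_{m|n})$, and reordering the $\n^+$-monomials past $y$ produces lower-order contributions, so the induction needs a leading-term or Shapovalov-type nondegeneracy input (equivalently, faithfulness of the sum of Verma modules over all of $\h_{m|n}^*$), which your sketch asserts but does not supply -- this is a real missing argument, not sign bookkeeping. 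Either supply it, or do what the paper does: restrict attention to central elements, where the Harish-Chandra isomorphism settles the claim in three lines and is all that the application requires.
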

\begin{proof}
    We prove that both modules have trivial annihilators.

    If $z\in Z(U(\mathfrak{gl}(m|n)))$ annihilates $\bigoplus_{\lambda\in \mathcal{AW}_{m|n}}M(\lambda)$, then $\chi_{\lambda}(z)=0$ for all $\lambda \in \mathcal{AW}_{m|n}$. Since $\chi_{\lambda}(z) = \mathrm{ev}_{\lambda + \rho_{m|n}}(\chi_{m|n}(z))$, we have
    \[\mathrm{ev}_{\lambda+\rho_{m|n}}(\chi_{m|n}(z))=0\]
    for all $\lambda\in \mathcal{AW}_{m|n}$. But this can only happen if $\chi_{m|n}(z)=0$. Thus, we must have $z = 0$.

    Now suppose $x\in \mathcal{D}'(m|n)$ annihilates $\mathcal{F}$ and $x$ is nonzero. Write $x$ as the sum of elements of the form
    \[
    p(t_0,\ldots, t_{\bar{n}})\cdot \partial_0^{b_0}\cdots\partial_{\bar{n}}^{b_{\bar{n}}} 
    \]
    where $p(t_0,\ldots, t_{\bar{n}})$ is some nonzero polynomial. Furthermore, assume that each term in the sum has a different $(b_{0},\ldots, b_{\bar{n}})$. Pick the term that is $\partial$-lexicographically maximal (i.e. relative to the $\partial_0$-degree, the $\partial_1$-degree, and so on). Consider the polynomial $t_0^{b_0}t_1^{b_1}\cdots t_{\bar{n}}^{b_{\bar{n}}}$. Then the leximaximal property of the term we picked guarantees that all the other terms act as zero. Furthermore,
    \begin{align*}
        (\partial_0^{b_0}\cdots\partial_{\bar{n}}^{b_{\bar{n}}})(t_0^{b_0}\cdots t_{\bar{n}}^{b_{\bar{n}}}) = c
    \end{align*}
    for a nonzero $c\in \mathbb{R}$. Therefore
    \begin{align*}
        x\cdot (t_0^{b_0}\cdots t_{\bar{n}}^{b_{\bar{n}}}) &= (p(t_0,\ldots, t_{\bar{n}})\cdot \partial_0^{b_0}\cdots\partial_{\bar{n}}^{b_{\bar{n}}} )(t_0^{b_0}\cdots t_{\bar{n}}^{b_{\bar{n}}})\\
        &= c\cdot p(t_0,\ldots, t_{\bar{n}})\ne 0.
    \end{align*}
    This contradicts the fact that $x$ annihilates $\mathcal{F}$. Therefore $\mathcal{F}$ has a trivial annihilator. 
\end{proof}

Define
\[\tau(p(\ell_0,\ldots, \ell_{m},\ell_{\bar{1}},\ldots \ell_{\bar{{n}}})) \coloneqq p(\ell_0 + \ell + m+1, \ell_1 + \ell, \ldots, \ell_{m}+\ell, \ell_{\bar{{1}}} - \ell, \ldots, \ell_{\bar{{n}}}-\ell).\]
Then it is easy to see that $\tau$ is a homomorphism from $I(\mathfrak{h}_{m+1|n})$ to $\mathbb{C}[\ell_0]\otimes I(\mathfrak{h}_{m|n}) $.
\begin{thm}\label{thm : HC homomorphism}
    We have that
    \[\varphi|_{Z(U(\mathfrak{gl}(m+1|n)))} = \chi_{0,m|n}^{-1}\tau\chi_{m+1|n}.\]
    In particular, $\varphi(Z(U(\mathfrak{gl}(m+1|n))))$ is a subalgebra of $\mathbb{C}[\mathcal{E}]\otimes Z(U(\mathfrak{gl}(m|n)))$, and the following diagram is commutative:
    \begin{center}
    \begin{tikzpicture}[shorten >=1pt,node distance=4cm,on grid,auto]
      \tikzstyle{every state}=[fill={rgb:black,1;white,10}]
        \node (q_0)                  {$Z(U(\mathfrak{gl}(m+1|n)))$};
        \node (q_1)  at (6,0)        {$\mathbb{C}[\mathcal{E}]\otimes Z(U(\mathfrak{gl}(m|n)))$};
        \node (q_2)  at (6,-2.5)     {$\mathbb{C}[\ell_0]\otimes I(\mathfrak{h}_{m|n}) $};
        \node (q_3)  at (0,-2.5)     {$I(\mathfrak{h}_{m+1|n})$};
        \node (p_3)  at (-0.2,-2.25) {};
        \node (p_0)  at (-0.2,-0.15) {};
        
        \path[->,thick,-{Stealth[width=5pt, length=10pt]}]
        (q_0) edge node {$\varphi$}             (q_1)
              edge node {$\chi_{m+1|n}$}       (q_3)
        (q_1) edge node {$\chi_{0,m|n}$}   (q_2)
         (q_3)     edge node {$\tau$}           (q_2);
    \end{tikzpicture}
    \end{center}
\end{thm}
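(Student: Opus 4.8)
The plan is to show that both $\varphi$ and $\chi_{0,m|n}^{-1}\tau\chi_{m+1|n}$, restricted to $Z(U(\gl(m+1|n)))$, act by the same scalar on each module $\mcF_a\otimes M(\lambda)$ with $a\in\ZZ$ and $\lambda\in\mcA\mcW_{m|n}$, and then to deduce equality of the two maps from a faithfulness argument. Fix $z\in Z(U(\gl(m+1|n)))$. Since $\chi_{m+1|n}(z)\in I(\mathfrak{h}_{m+1|n})$ and $\tau$ carries $I(\mathfrak{h}_{m+1|n})$ into $\CC[\ell_0]\otimes I(\mathfrak{h}_{m|n})$, which is the image of the isomorphism $\chi_{0,m|n}$, the element $z'\coloneqq\chi_{0,m|n}^{-1}\tau\chi_{m+1|n}(z)$ is well defined and lies in $\CC[\mcE]\otimes Z(U(\gl(m|n)))$. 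It then suffices to prove $\varphi(z)=z'$.

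First I would record the two scalar actions. Fix $a\in\ZZ$ and $\lambda\in\mcA\mcW_{m|n}=\mcA\mcW_{m|n}(a)$, and write $\ev_{(a,\lambda+\rho_{m|n})}$ for evaluation of a polynomial in $\ell_0,\dots,\ell_{\bar n}$ at $\ell_0=a$ and $\ell_k=(\lambda+\rho_{m|n})_k$ for $k\in I$. Since $\mcE$ acts as $a$ on $\mcF_a$ and each $w\in Z(U(\gl(m|n)))$ acts on $M(\lambda)$ as $\chi_\lambda(w)=\ev_{\lambda+\rho_{m|n}}(\chi_{m|n}(w))$, any $u\in\CC[\mcE]\otimes Z(U(\gl(m|n)))$ acts on $\mcF_a\otimes M(\lambda)$ as the scalar $\ev_{(a,\lambda+\rho_{m|n})}(\chi_{0,m|n}(u))$. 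Applied to $u=R=\sum_i\mcE^i\otimes z_i$ this identifies the scalar $r$ appearing in the weight $\widetilde{\lambda}$ of \eqref{weight} with $\ev_{(a,\lambda+\rho_{m|n})}(\ell)$, and applied to $u=z'$ it shows that $z'$ acts on $\mcF_a\otimes M(\lambda)$ as $\ev_{(a,\lambda+\rho_{m|n})}(\tau\chi_{m+1|n}(z))$. On the other hand, the action of $\varphi(z)$ on $\mcF_a\otimes M(\lambda)$ is, by definition of the $\gl(m+1|n)$-module structure, the action of $z$; by Corollary \ref{cor: verma} this module is isomorphic to $M(\widetilde{\lambda})$, so $\varphi(z)$ acts as $\chi_{\widetilde{\lambda}}(z)=\ev_{\widetilde{\lambda}+\rho_{m+1|n}}(\chi_{m+1|n}(z))$.

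The crux is then the identity $\ev_{\widetilde{\lambda}+\rho_{m+1|n}}=\ev_{(a,\lambda+\rho_{m|n})}\circ\tau$ of functionals on $\CC[\ell_0,\dots,\ell_{\bar n}]$, which I would check coordinatewise. Using $\widetilde{\lambda}=(a+r,\lambda_1+r,\dots,\lambda_m+r,\lambda_{\bar 1}-r,\dots,\lambda_{\bar n}-r)$ with $r=\ev_{(a,\lambda+\rho_{m|n})}(\ell)$, together with the relations $(\rho_{m+1|n})_0=m+1$, $(\rho_{m+1|n})_i=m-i+1=(\rho_{m|n})_i$ for $1\le i\le m$, and $(\rho_{m+1|n})_{\bar j}=-j=(\rho_{m|n})_{\bar j}$, the defining substitution of $\tau$ sends $\ell_0\mapsto\ell_0+\ell+m+1$, $\ell_i\mapsto\ell_i+\ell$, $\ell_{\bar j}\mapsto\ell_{\bar j}-\ell$, and after the evaluation $\ell_0=a$, $\ell_k=(\lambda+\rho_{m|n})_k$ these become exactly $(\widetilde{\lambda}+\rho_{m+1|n})_0$, $(\widetilde{\lambda}+\rho_{m+1|n})_i$, and $(\widetilde{\lambda}+\rho_{m+1|n})_{\bar j}$. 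Hence $\chi_{\widetilde{\lambda}}(z)=\ev_{(a,\lambda+\rho_{m|n})}(\tau\chi_{m+1|n}(z))$, so $\varphi(z)$ and $z'$ act by the same scalar on every $\mcF_a\otimes M(\lambda)$ with $a\in\ZZ$, $\lambda\in\mcA\mcW_{m|n}$.

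To finish, I would assemble the faithfulness argument. By Lemma \ref{lem: faithful}, $\mcF=\bigoplus_{a\in\ZZ}\mcF_a$ is a faithful $\mcD'(m|n)$-module and $\bigoplus_{\lambda\in\mcA\mcW_{m|n}}M(\lambda)$ is a faithful $U(\gl(m|n))$-module; since a tensor product of faithful modules over a tensor product of $\CC$-superalgebras is again faithful (the action map factors as an injection of $\mcD'(m|n)\otimes U(\gl(m|n))$ into $\mathrm{End}_\CC(\mcF)\otimes\mathrm{End}_\CC(\bigoplus_\lambda M(\lambda))$ followed by the natural injection of the latter into $\mathrm{End}_\CC(\mcF\otimes\bigoplus_\lambda M(\lambda))$, the Koszul sign contributing only an invertible twist), the module $\bigoplus_{a,\lambda}\mcF_a\otimes M(\lambda)$ is faithful over $\mcD'(m|n)\otimes U(\gl(m|n))$. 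Two elements of this algebra acting identically on it must coincide, so $\varphi(z)=z'$. This establishes $\varphi|_{Z(U(\gl(m+1|n)))}=\chi_{0,m|n}^{-1}\tau\chi_{m+1|n}$, and the inclusion $\varphi(Z(U(\gl(m+1|n))))\subseteq\CC[\mcE]\otimes Z(U(\gl(m|n)))$ together with commutativity of the diagram follow at once. I expect the main obstacle to be the coordinatewise bookkeeping --- keeping the $R$-induced shift $r$ and the two distinct Weyl vectors $\rho_{m|n}$, $\rho_{m+1|n}$ straight so that they assemble precisely into the substitution defining $\tau$ --- together with checking that Corollary \ref{cor: verma} and Lemma \ref{lem: faithful} between them furnish enough modules to separate $\varphi(z)$ from $z'$.
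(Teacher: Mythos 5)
Your proposal is correct and follows essentially the same route as the paper's proof: compare the scalar actions of $\varphi(z)$ and $\chi_{0,m|n}^{-1}\tau\chi_{m+1|n}(z)$ on the modules $\mcF_a\otimes M(\lambda)$ for $a\in\ZZ$, $\lambda\in\mcA\mcW_{m|n}$ via Corollary \ref{cor: verma} and the coordinatewise identity $\ev_{\widetilde{\lambda}+\rho_{m+1|n}} = \ev_{(a,\lambda+\rho_{m|n})}\circ\tau$, then conclude by faithfulness of the direct sum of these modules using Lemma \ref{lem: faithful}. The only difference is cosmetic: you spell out the tensor-product faithfulness and the identification $r=\ev_{(a,\lambda+\rho_{m|n})}(\ell)$ in slightly more detail than the paper does.
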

\begin{proof}
    Suppose $\lambda\in \mathcal{AW}_{m|n}$ and $a\in \mathbb{Z}$. Let $M(a,\lambda) = \mathcal{F}_a\otimes M(\lambda)$, considered as a $\mathfrak{gl}(m+1|n)$-module. We show that $\varphi(z) = \chi_{0,m|n}^{-1}\tau \chi_{m+1|n}(z)$ for $z\in Z(U(\mathfrak{gl}(m+1|n)))$, as an identity of endomorphisms of $M(a,\lambda)$. By Lemma~\ref{lem: faithful}, and by the fact that the tensor product of faithful modules is faithful, the module $\bigoplus_{a\in \mathbb{Z}, \lambda\in \mathcal{AW}_{m|n}} M(a,\lambda)$ is faithful over $\mathcal{D}'(m|n)\otimes U(\mathfrak{gl}(m|n))$. Then it will follow that $\varphi = \chi_{0,m|n}^{-1} \tau \chi_{m+1|n}(z)$. 
    
    By Corollary \ref{cor: verma}, we have $M(a,\lambda)\cong M(\widetilde{\lambda})$. Thus, every $z\in Z(U(\mathfrak{gl}(m+1|n)))$ acts on $M(a,\lambda)$ as $\chi_{\widetilde{\lambda}}(z)\mathrm{Id}$. Now set $\xi = \chi_{0, m|n}^{-1}\tau\chi_{m+1|n}$. Then, $\xi(z)$ acts on $M(a,\lambda)$ as $\chi_{a, \lambda}(\xi(z))\mathrm{Id}$, where $\chi_{a, \lambda}(\sum \mathcal{E}^i \otimes z'_i) \coloneqq \sum a^i \chi_{\lambda}(z'_i)$, because $\mathcal{E}^i \otimes 1$ acts on $M(a,\lambda)$ as $a^i$. Thus we just need to show $\chi_{a,\lambda}(\xi(z)) =\chi_{\widetilde{\lambda}}(z)$
    
    For a polynomial $p(t_0,\ldots, t_{\bar{n}})$, we have
    \begin{align*}
        &\mathrm{ev}_{a,\lambda+\rho_{m|n}} \tau(p)\\ = &\mathrm{ev}_{a,\lambda+\rho_{m|n}} p(\ell_0 + \ell + m+1, \ell_1 + \ell, \cdots, \ell_m + \ell, \ell_{\bar{1}}-\ell, \ldots, \ell_{\bar{n}}-\ell)\\
        =&p(a+ r+m+1, \ell_1 + r + m,\ldots, \ell_m + r +1, \ell_{\bar{1}} - r -1 , \ldots, \ell_{\bar{n}} - r - n)\\
        = &\mathrm{ev}_{\widetilde{\lambda} + \rho_{m+1|n}} p
    \end{align*}
    (Note that here we set $\mathrm{ev}_{a, \lambda+\rho_{m|n}}(p) \coloneqq p(a, \lambda_1 + m, ..., \lambda_m + 1, \lambda_{\bar{1}} - 1, ..., \lambda_{\bar{1}} - n)$ for all $p$).
    As a result, setting $p = \chi_{m+1|n}(z)$, and noting that $\chi_{a, \lambda}(\mathcal{E}^i \otimes z') = \mathrm{ev}_{a, \lambda + \rho_{m|n}}(\chi_{0, m|n}(\mathcal{E}^i \otimes z'))$, we have
    \[ \chi_{a,\lambda}(\xi(z))=\chi_{a,\lambda} \chi_{0, m|n}^{-1}\tau \chi_{m+1|n}(z) = \mathrm{ev}_{\widetilde{\lambda} + \rho_{m+1|n}}(\chi_{m+1|n}(z))= \chi_{\widetilde{\lambda}}(z).\]
    This completes the proof.
\end{proof}

\section{Super version of Newton's formula and Capelli-type Berezinians}\label{sec: capelli berezinians}

We have now computed the image of the center of $U(\mathfrak{gl}(m+1|n))$ under $\varphi$, and now we will describe the images of individual generators, as in \cite{GR_2021}. In \cite{GR_2021}, the images of the Gelfand generators are computed explicitly using a progression of identities related to Capelli generators, Capelli determinants, and an analogue of Newton's formula for $\mathfrak{gl}(m)$ (see \cite{Umeda_1998}). In this section, we extend this approach to the super setting and explicitly compute the images of the Gelfand generators under $\varphi$, though some care is required given that the center of $U(\mathfrak{gl}(m+1|n))$ is infinitely generated. Along the way we appeal to the theory of Yangians to reprove a super-version of Newton's formula that will be useful for us. 

\subsection{Yangians for \texorpdfstring{$\mathfrak{gl}(m|n)$}{gl(m|n)}}
Let us recall the Yangian $Y(\mathfrak{gl}(m|n))$ as defined in \cite{Nazarov_1991} (also see \cite{Gow_2005}). This is the $\mathbb{Z}_2$-graded associative algebra over $\mathbb{C}$ with generators
\begin{align*}
    \{T_{ij}^{(r)}: i,j\in I, r\ge 1\}
\end{align*}
and defining relations
\begin{align*}
    [T_{ij}^{(r)}, T_{kl}^{(s)}] = (-1)^{|i||j|+|i||k|+|j||k|}\sum_{p=0}^{\min(r,s)- 1} (T_{kj}^{(p)}T_{il}^{(r+s-1-p)}-T_{kj}^{(r+s-1-p)}T_{il}^{(p)}).
\end{align*}
The $\mathbb{Z}_2$-grading is given by $|T_{ij}^{(r)}| = |i|+|j|$.
We define the formal power series
\[T_{ij}(u) \coloneqq \delta_{ij}+ T_{ij}^{(1)}u^{-1} + T_{ij}^{(2)}u^{-2}+\cdots\]
and the matrix $T(u) \coloneqq [T_{ij}(u)]\in \mathrm{Mat}_{m|n}(Y(\mathfrak{gl}(m|n)))$. We can also identify $T(u)$ with
\begin{align*}
    \sum_{i,j\in I}T_{ij}(u)\otimes e_{ij}(-1)^{|j|(|i|+1)}.
\end{align*}

Note that some authors drop the sign term in the definition of $T(u)$ (e.g., Equation 2.12 in \cite{nazarov2020yangian}). As a result, these authors use a different sign convention for the comultiplication $\Delta$ defined below. However, this does not matter for our purposes because we never need to use the Hopf algebra structure of the Yangian.

In general, we can identify an operator $\sum A_{ij}\otimes e_{ij}(-1)^{|j|(|i|+1)}$ in $Y(\mathfrak{gl}(m|n))[\![ u^{-1}]\!] \otimes \mathrm{End} \mathbb{C}^{m|n}$ with the matrix $[A_{ij}]$. The extra signs are inserted to let the product of two matrices be calculated in the usual way. The Yangian is a Hopf algebra with comultiplication
\begin{align*}
    \Delta: T_{ij}(u)\mapsto \sum_{k\in I} T_{ik}(u)\otimes T_{kj}(u),
\end{align*}
antipode $S: T(u)\mapsto T^{-1}(u)$, and counit $T(u)\mapsto 1$. 

We define the power series with coefficients in the Yangian $Y(\mathfrak{gl}(m|n))$ of $\mathfrak{gl}(m|n)$ given by
\begin{align*}
    Z_h(u) \coloneqq 1+ \mathrm{str}\left(\frac{T(u+h) - T(u)}{h} \cdot T^{-1}(u) \right)
\end{align*}
for any $h\in \mathbb{C}$, where $\mathrm{str}(A)$ denotes the supertrace of a matrix $A$. The $h$ here has no relation to the $h$ that appears in \cite{Nazarov_1991}. Define
\begin{align}\label{eqn: Z(u)}
    Z(u) \coloneqq \lim_{h \to m-n}Z_h(u).    
\end{align}
For $m \ne n$, $Z(u) = Z_{m-n}(u)$ while for $m = n$ we have
\begin{align*}
    Z(u) = 1 + \mathrm{str}\left(\left(\frac{\mathrm{d}}{\mathrm{d}u}T(u)\right)T^{-1}(u)\right).
\end{align*}
Note that the same series $Z(u)$ was defined in \cite{Nazarov_1991}; however it was not written in terms of this limit construction. The coefficients of the series $Z(u)$ are known to generate the center of $Y(\mathfrak{gl}(m|n))$ (see \cite{Nazarov_1991} and \cite{Gow_2007}).

Let $B(u)$ be the quantum Berezinian of $T(u)$, defined by
\begin{align*}
    B(u) \coloneqq \mathrm{Det}[T_{ij}(u+(m-n-j))]_{i,j=1,\ldots m}\times \mathrm{Det}[T_{ij}^*(u-(m+n-j+1))]_{i,j=m+1,\ldots, m+n},
\end{align*}
where $[T_{ij}^*(u)]= T^*(u)= (T^{-1}(u))^{st}$ and $A^{st}= [A_{ji}(-1)^{|i|(|j|+1)}]$ denotes matrix supertransposition. By the determinant of a matrix $X = [X_{ab}]_{a,b=1,\ldots, N}$ with noncommuting entries, we mean the sum
\[\mathrm{Det} X \coloneqq \sum_{\sigma \in S_N}\mathrm{sgn}(\sigma) X_{\sigma(1),1}\cdots X_{\sigma(N),N}.\]
It is shown in \cite{Nazarov_1991} that
\begin{align}\label{eqn: quantum liouville}
    Z(u)=\frac{B(u+1)}{B(u)},
\end{align}
which could be referred to as a ``quantum Liouville formula''. Additionally, there is a projection homomorphism $\pi_{m|n}: Y(\mathfrak{gl}(m|n))\to U(\mathfrak{gl}(m|n))$ given by 
\begin{align}\label{eqn: projection}
    T_{ij}(u)\mapsto \delta_{ij} + e_{ij}(-1)^{|i|}u^{-1}.
\end{align}

\subsection{Super Newton's formula}
We can use these tools from Yangians to generalize the Newton's formula for $\mathfrak{gl}(m)$ (see \cite{Umeda_1998} and Theorem 7.1.3 in \cite{Molev_2007}) to $\mathfrak{gl}(m|n)$. Let 
\[\mathbf{E} \coloneqq ((-1)^{|i|}e_{ij})_{i,j\in I}\]
and define
\begin{align*}
    C_{m|n}(T) \coloneqq& \mathrm{Det}[\mathbf{E}_{ij}-T-i+1]_{i,j = 1,\ldots,m}\times \mathrm{Det}[(\mathbf{E}_{\bar{i}\bar{j}}-T-m+i)^{*}]_{i,j=1,\ldots, n} \\ =& \sum_{\sigma\in S_m}\mathrm{sgn}(\sigma) \left(\mathbf{E} - T\right)_{\sigma(1),1}\cdots (\mathbf{E} -T-m+1)_{\sigma(m),m}\\
    &\times \sum_{\tau\in S_n}\mathrm{sgn}(\tau)((\mathbf{E} - T -m+1)^{-1})^{st}_{m+\tau(1),m+1}\cdots ((\mathbf{E} - T-(m-n))^{-1})^{st}_{m+\tau(n),n}
\end{align*}
to be the {\it Capelli Berezinian} of $\mathfrak{gl}(m|n)$.
It follows from the definitions that $C_{m|n}(T) = Q(T) \pi_{m|n}(B(-T-(m-n)+1))$ for the rational function $Q(T)$ given by
\begin{align}\label{eqn: Q(T)}
        Q(T) = \begin{cases}
        T(T+1) \cdots (T+(m-n-1)) &\text{for $m> n$}\\
        1 &\text{for $m=n$}\\
        (T-(n-m))^{-1}(T-(n-m-1))^{-1}\cdots (T-1)^{-1} &\text{for $n>m$}.
        \end{cases}
\end{align}
It is well known that the coefficients of $\pi_{m|n}(B(T))$ generate $Z(U(\mathfrak{gl}(m|n)))$. Hence the coefficients of $C_{m|n}(T)$ also generate $Z(U(\mathfrak{gl}(m|n)))$. We call these coefficients the {\it Capelli generators} of $Z(U(\mathfrak{gl}(m|n)))$. Since the coefficients of $C_{m|n}(T)$ are central, we can consider $C_{m|n}$ as a function from the algebra of Laurent series in $T$ with coefficients in $U(\mathfrak{gl}(m|n))$ to itself.
The Newton's formulas proved in \cite{Umeda_1998} and Theorem 7.1.3 of \cite{Molev_2007} relate the Capelli Berezinian (called the Capelli determinant when $n=0$) to the Gelfand invariants $G_k^{\mathfrak{gl}(m)}$. We reprove analogs of these results to $\mathfrak{gl}(m|n)$ using the same method as \cite{Molev_2007}.

The Gelfand invariants of $\mathfrak{gl}(m|n)$ are given by
\[G_k^{\mathfrak{gl}(m|n)} \coloneqq \sum_{i_1,\ldots,i_k \in \hat{I}} (-1)^{|i_2|+\cdots + |i_k|}e_{i_1i_2}\otimes e_{i_2i_3}\otimes \cdots e_{i_ki_1}\]
for $k\ge 1$, where the sum ranges over all $k$-tuples $(i_1,\ldots, i_k)$ with terms in $I$. It is known that the Gelfand invariants $G_k$ for $k=0,1, \ldots$ generate $Z(U(\mathfrak{gl}(m|n)))$ (see for example \cite{Rao_2021}). We can also write $G_k^{\mathfrak{gl}(m|n)} = \mathrm{str} \mathbf{E}^k$. Indeed, we have
\begin{align*}
    \mathrm{str}\mathbf{E}^k &=\sum_{i\in I} (-1)^{|i|}(\mathbf{E}^k)_{ii}\\
    &= \sum_{i\in I} (-1)^{|i|}\sum_{i_2,\ldots, i_k\in I} \mathbf{E}_{ii_2}\mathbf{E}_{i_2i_3}\cdots \mathbf{E}_{i_ki}\\
    &=\sum_{i_1,\ldots, i_k\in I} (-1)^{|i_1| + (|i_1|+\cdots + |i_{k-1}| + |i_k|)}e_{i_1i_2}e_{i_2i_3}\cdots e_{i_ki_1}\\
    &=\sum_{i_1,\ldots, i_k\in I} (-1)^{|i_2|+\cdots + |i_k| }e_{i_1i_2}e_{i_2i_3}\cdots e_{i_ki_1}\\
    &= G_k^{\mathfrak{gl}(m|n)}. 
\end{align*}
\begin{thmsub}[Newton's Formulas for $\mathfrak{gl}(m|n)$]\label{thm : super newton}
    We have
    \begin{align*}
        \frac{C_{m|n}(T-(m-n))}{C_{m|n}(T-(m-n)+1)} = 1- \sum_{k=0}^\infty G_k^{\mathfrak{gl}(m|n)}T^{-1-k}.
    \end{align*}
\end{thmsub}
\begin{proof}
   
    Since $C_{m|n}(T) = Q(T) \pi_{m|n}(B(-T-(m-n)+1))$, (\ref{eqn: Q(T)}) and that $G_0^{\mathfrak{gl}(m|n)}=m-n$ yields that the identity is equivalent to 
    \begin{align*}
        \frac{\pi_{m|n}(B(-T+1))}{\pi_{m|n}(B(-T))} = 1 + \frac{1}{-T+(m-n)}\sum_{k=1}^\infty \mathrm{str} \mathbf{E}^k T^{-k}.
    \end{align*}
    Using (\ref{eqn: Z(u)}) and (\ref{eqn: projection}), we get that
\begin{align*}
    \pi_{m|n}(Z(-u)) &= \lim_{h \to m-n}1+\mathrm{str}\left(\frac{\mathbf{E} u^{-1} - \mathbf{E}(u-h)^{-1}}{h}(1-\mathbf{E} u^{-1})^{-1}\right)\\
    &= \lim_{h \to {m-n}}1 - \frac{1}{u(u-h)}\sum_{k=0}^\infty \mathrm{str} \mathbf{E}^{k+1} u^{-k} \\
    &=1 - \frac{1}{u-(m-n)}\sum_{k=1}^\infty \mathrm{str} \mathbf{E}^k u^{-k}.
\end{align*}
Then the identity follows from (\ref{eqn: quantum liouville}).
\end{proof}
\subsection{Images of Central Elements}
Next, we find the image of the Capelli Berezinian under the Harish-Chandra homomorphism, which combined with Theorem~\ref{thm : HC homomorphism}, will let us find the image of the Capelli Berezinian under $\varphi$.
\begin{propsub}\label{image of capelli under HC}
    We have
    \begin{align*}
        \chi_{m|n}(C_{m|n}(-T)) = (T+\ell_1-m)\cdots (T+\ell_m-m) (T-\ell_{\bar{1}}-m)^{-1}\cdots (T-\ell_{\bar{n}}-m)^{-1}.
    \end{align*}
\end{propsub}
\begin{proof}
    Note $C_{m|n}(T)$ is the product of two parts, one part summing over $\sigma\in S_m$, and one summing over $\tau\in S_n$. However, note that in both parts, only the summand corresponding to the identity permutation has a nonzero image under $\chi_{m|n}$, because in both $\mathbf{E}+T$ and
    \begin{align*}
        (\mathbf{E} + T)^{-1} = T^{-1}-\mathbf{E} T^{-2}+ \mathbf{E}^2 T^{-3}-\cdots,
    \end{align*}
    the only elements that stabilize the weight spaces of a Verma module $M(\lambda)$ are along the diagonal. To see this, define $r_{k}^{\mathfrak{gl}(m|n)}(a,b) = \mathbf{E}^k_{ab}$. Then
    \[r_{k}^{\mathfrak{gl}(m|n)}(a,b) = \sum_{i_1,\ldots, i_{k-1}\in I} (-1)^{|i_1|+\cdots + |i_{k-1}|}e_{ai_1}\otimes e_{i_1i_{2}}\otimes \cdots \otimes e_{i_{k-1}b}.\]
    and it follows that acting by $r_k^{\mathfrak{gl}(m|n)}(a,b)$ raises a weight by $\delta_a - \delta_b$, hence stabilizes the weight spaces if and only if $a=b$. Then, the product of the two terms gives the result. 
\end{proof}
Define $C_{\varphi}(T)$ via the identity $C_{\varphi}(\varphi(T)) = \varphi(C_{m+1|n}(T))$. Since the coefficients of the Capelli determinant are central and $\varphi$ maps $Z(U(\mathfrak{gl}(m+1|n)))$ to $Z(\mathcal{D}'(m|n)\otimes U(\mathfrak{gl}(m|n)))$, the domain of $C_{\varphi}(T)$ consists of Laurent series in $\varphi(T)$ with coefficients in $\mathcal{D}'(m|n)\otimes U(\mathfrak{gl}(m|n))$.
For convenience we will write $T$ for $\varphi(T)$.
\begin{thmsub}\label{capelli under varphi}
    We have
    \begin{align*}
        C_{\varphi}(T+R) = (\mathcal{E} - T)C_{m|n}(T+1).
    \end{align*}
    Consequently, $C_{\varphi}(\mathcal{E} + R)=0$.
\end{thmsub}
\begin{proof}
    The identity is equivalent to
    \begin{align*}
        \varphi(C_{m+1}(-T)) = (\mathcal{E} +T +R)C_{m|n}(-T-R+1).
    \end{align*}
    To prove this, we use Proposition~\ref{image of capelli under HC}, the corresponding version for $m+1|n$, Theorem \ref{thm : HC homomorphism}, and that $\chi_{0,n}(R)=\ell$.
\end{proof}
Define
\[R_2 = (\mathcal{E} + m-n)\otimes 1, \]
and
\begin{align*}
    A_k(s) &= \sum_{g=0}^{k-s}\binom{k}{g} R^g R_2^{k-s-g}.
\end{align*}
We can combine the previous theorem with the Newton's formula to find the images of the Gelfand invariants under $\varphi$.
\begin{thmsub}\label{thm : gelfand}
    The following formula holds for all positive integers $k$:
    \begin{align*}
        \varphi(G_k^{\mathfrak{gl}(m+1|n)}) &= A_k(1)(\mathcal{E}\otimes 1) + (m+1-n)R^{k} + \sum_{g=0}^{k-1}\binom{k}{g}R^g\left(1\otimes G_{k-g}^{\mathfrak{gl}(m|n)}\right)\\
        &\phantom{=}-\sum_{s=2}^k \left(1\otimes G_{s-1}^{\mathfrak{gl}(m|n)}\right)A_k(s).
    \end{align*}
\end{thmsub}
\begin{proof}
    Theorem~\ref{capelli under varphi} implies that
    \begin{align}\label{ratio of capelli}
    \frac{C_{\varphi}(T-(m-n)-1)}{C_{\varphi}(T-(m-n))} = \left(1-\frac{1}{T-R-R_2}\right)\frac{C_{m|n}(T-R-(m-n))}{C_{m|n}(T-R-(m-n)+1)}.
    \end{align}
    Applying $\varphi$ to the Newton's formula for $\mathfrak{gl}(m+1|n)$ gives
    \begin{align*}
    \frac{C_{\varphi}(T-(m-n)-1)}{C_{\varphi}(T-(m-n))} &= 1-\sum_{k=0}^\infty \varphi(G_{k}^{\mathfrak{gl}(m+1|n)})T^{-1-k} \\ &= 1- (m-n+1)T^{-1} - T^{-1}\sum_{k=1}^\infty \varphi(G_{k}^{\mathfrak{gl}(m+1|n)})T^{-k}.
\end{align*}
Analogously we can express the right hand side of (\ref{capelli under varphi}) as a power series in $T^{-1}$. We complete the proof by comparing and computing the coefficients of $T^{-k}$ on both sides.
\end{proof}

\section{Kernel of \texorpdfstring{$\varphi_{R_1}$}{varphiR1}}\label{sec: kernel}
In this section, we find the kernel of the map $\varphi_{R_1}$. For this section, we write $\varphi=\varphi_{R_1}$ and $G_1 = G_1^{\mathfrak{gl}(m+1|n)}$ for convenience.
\begin{thm}\label{thm : kernel}
    Let $(G_1)$ be the two sided ideal in $U(\mathfrak{gl}(m+1|n))$ generated by $G_1$. Then the kernel of $\varphi$ is $(G_1)$.
\end{thm}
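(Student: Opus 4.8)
The plan is to prove both inclusions $(G_1) \subseteq \ker\varphi$ and $\ker\varphi \subseteq (G_1)$ separately. The first inclusion is immediate from the formulas: since $R_1 = -\frac{1}{m-n+1}(\mcE\otimes 1 + 1\otimes G_1^{\gl(m|n)})$, one computes directly that $\varphi(G_1^{\gl(m+1|n)}) = \varphi\bigl(\sum_{i\in\hat I} e_{ii}\bigr)$ vanishes. Indeed, summing the images of $e_{00}$ and of $e_{ii}$ for $i \in I$ gives $(t_0\partial_0 + \sum_{i\in I} t_i\partial_i)\otimes 1 + 1\otimes G_1^{\gl(m|n)} + (m+1-n)R_1 = \mcE\otimes 1 + 1\otimes G_1^{\gl(m|n)} + (m+1-n)R_1 = 0$. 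Since $G_1$ is central and $\varphi$ is an algebra homomorphism, $(G_1)\subseteq\ker\varphi$, and moreover $\varphi$ factors through $U(\gl(m+1|n))/(G_1)$.

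The substance is the reverse inclusion. The natural strategy is to reduce to the special linear case. Observe that $U(\gl(m+1|n)) = U(\xsl(m+1|n)) \oplus G_1\cdot U(\gl(m+1|n))$ as vector spaces (or more precisely, $U(\gl(m+1|n)) \cong U(\xsl(m+1|n))\otimes \CC[G_1]$ since $G_1$ is central and $\gl(m+1|n) = \xsl(m+1|n)\oplus \CC G_1$ when $m+1\neq n$; one must be slightly careful but this holds as $\gl(m+1|n)$ is a direct sum of $\xsl(m+1|n)$ and a central one-dimensional ideal away from $m+1=n$). Hence $U(\gl(m+1|n))/(G_1) \cong U(\xsl(m+1|n))$, and under the commutative diagram of Proposition~\ref{prop: commutative diagram} with $\varphi_{R_1} = \varphi^s\circ\pi^g$, the induced map on the quotient is exactly $\varphi^s$. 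So it suffices to show $\varphi^s : U(\xsl(m+1|n)) \to \mcD'(m|n)\otimes U(\gl(m|n))$ is \emph{injective}. I would prove this by exhibiting a faithful module: by the argument behind Lemma~\ref{lem: faithful} and Theorem~\ref{thm : HC homomorphism}, the module $\bigoplus_{a\in\ZZ,\ \lambda\in\mcA\mcW_{m|n}} \mcF_a\otimes M(\lambda)$ is faithful over $\mcD'(m|n)\otimes U(\gl(m|n))$; pulling back via $\varphi^s$ (equivalently $\varphi$ with $R=0$, composed with $\pi^g$), this family of $\gl(m+1|n)$-modules is isomorphic via Corollary~\ref{cor: verma} to the family of Verma modules $M(\widetilde\lambda)$. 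If any $u\in U(\xsl(m+1|n))$ lies in $\ker\varphi^s$, then $u$ (lifted to $U(\gl(m+1|n))$ via $\iota^s$) annihilates every such $M(\widetilde\lambda)$. One then argues that the weights $\widetilde\lambda$ arising this way are Zariski-dense enough in $\h_{m+1|n}^*$ that an element of $U(\xsl(m+1|n))$ annihilating all the corresponding Vermas must be zero: write $u$ in a PBW basis adapted to $\n^-\oplus\h\oplus\n^+$, act on the highest weight vector of $M(\widetilde\lambda)$, and use that the resulting coefficients are polynomials in the entries of $\widetilde\lambda$ that vanish on a Zariski-dense set.

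The main obstacle I anticipate is twofold. First, controlling exactly which weights $\widetilde\lambda$ are attainable: $\mcA\mcW_{m|n}$ consists of typical antidominant weights with a non-integrality condition, and $a$ ranges only over $\ZZ$, so the set $\{\widetilde\lambda\}$ is cut out by several open (non-integrality, typicality) conditions together with the antidominance inequalities — one must check this set is Zariski-dense in $\h_{m+1|n}^*$, or at least dense enough to force a PBW-coefficient polynomial to vanish identically. Second, the reduction $U(\gl(m+1|n))/(G_1)\cong U(\xsl(m+1|n))$ needs the hypothesis $m+1\neq n$ (built into the definition of $\pi^g$ and $R_1$), and one should confirm the isomorphism is compatible with the maps in Proposition~\ref{prop: commutative diagram} so that $\ker\varphi_{R_1}/(G_1)$ is identified with $\ker\varphi^s$. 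Assuming these, the proof concludes: $\ker\varphi_{R_1} = (G_1)$. An alternative to the density argument, closer in spirit to \cite{GR_2021}, would be to use the inflation construction of Section~\ref{sec: inflating} to realize $L(\xi)$ for all antidominant typical $\xi$ and note these separate elements of $U(\xsl(m+1|n))$ directly; I would present whichever is shorter once the details of the density claim are pinned down.
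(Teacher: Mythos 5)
Your opening computation and your reduction are exactly the paper's first step: $\varphi_{R_1}=\varphi^s\circ\pi^g$, $\ker\pi^g=(G_1)$ (using $\gl(m+1|n)=\xsl(m+1|n)\oplus\CC G_1$ for $m+1\neq n$), so the theorem reduces to injectivity of $\varphi^s$. Where you diverge is in how that injectivity is proved, and this is where your sketch has a genuine gap. The paper's argument is short and purely algebraic: compose $\varphi^s$ with $1\otimes\eps$, where $\eps$ is the counit of $U(\gl(m|n))$; the composite sends $e_{ab}\mapsto t_a\partial_b$ and $e_{00}-(-1)^{|a|}e_{aa}\mapsto t_0\partial_0-(-1)^{|a|}t_a\partial_a$, and one checks directly that $t_a\partial_b\mapsto e_{ab}$ defines a superalgebra homomorphism $\psi$ with $\psi\circ(1\otimes\eps)\circ\varphi^s=\Id$, so $\varphi^s$ is injective with no hypotheses on weights and no representation theory. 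Your module-theoretic route could in principle work, but as described it does not close. A small point first: faithfulness of $\bigoplus_{a,\lambda}\mcF_a\otimes M(\lambda)$ over the \emph{target} algebra $\mcD'(m|n)\otimes U(\gl(m|n))$ is not what you need (and Lemma~\ref{lem: faithful} is in any case only verified for central elements); injectivity of $\varphi^s$ requires that this family, pulled back along $\varphi^s$, be jointly faithful over $U(\xsl(m+1|n))$.

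The substantive gap is in the separation step. Evaluating $u$ on the highest weight vector of $M(\widetilde\lambda)$, as you propose, only shows that the component of $u$ in $U(\n^-_{m+1|n})U(\h_{m+1|n})$ modulo $U(\gl(m+1|n))\n^+_{m+1|n}$ vanishes: an element such as $e_{10}e_{01}$ kills \emph{every} highest weight vector of \emph{every} Verma module, so the polynomial coefficients you extract vanish identically without forcing $u=0$. The correct use of your hypothesis is that $\varphi^s(u)=0$ makes $u$ kill all of $\mcF_a\otimes M(\lambda)$, hence all vectors $Fv_{\widetilde\lambda}$ with $F\in U(\n^-_{m+1|n})$; the resulting coefficients are polynomial in $\widetilde\lambda$, and Zariski density of the attainable $\widetilde\lambda$ (which does hold, since $a$ runs over $\ZZ$ and $\mcA\mcW_{m|n}(a)$ is the complement of countably many hyperplanes) then yields $uU(\n^-_{m+1|n})\subseteq U(\gl(m+1|n))\n^+_{m+1|n}$, i.e.\ $u$ annihilates $M(\mu)$ for all $\mu\in\h^*_{m+1|n}$. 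You then still owe the statement that $\bigcap_{\mu}\operatorname{Ann}M(\mu)=0$ for $\gl(m+1|n)$ (equivalently, for $\xsl(m+1|n)$ after restriction). This is true, but it is precisely the nontrivial content your sketch leaves unproved — in the super setting it amounts to generic nondegeneracy of a Shapovalov-type pairing (note the atypicality hyperplanes), and nothing in the paper supplies it. So either prove that separation fact, or replace the whole step by the paper's explicit left inverse, which is both shorter and avoids the antidominance/typicality machinery of Corollary~\ref{cor: verma}.
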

\begin{proof}
    Recall from Proposition~\ref{prop: commutative diagram} that $\varphi = \pi^g \varphi^s$. The kernel of $\pi^g$ is clearly $(G_1)$, so we just need to show that $\varphi^s$ is injective. We construct a left inverse for $\varphi^s$, considered as a \textit{Lie superalgebra} homomorphism. In particular, this means we view $U(\mathfrak{gl}(m+1|n))$ and $U(\mathfrak{gl}(m|n))$ as Lie algebras with bracket given by $[x,y] = xy-(-1)^{|x||y|}yx$.
    
    Let $\mathcal{D}''(m|n)$ be the subalgebra of $\mathcal{D}'(m|n)$ generated by $\{t_a\partial_b : a,b\in \hat{I}, a\ne b\}\cup\{t_0\partial_0 - (-1)^{|a|}t_a\partial_a: a\in \hat{I}\}$. 
    Consider the homomorphism $1\otimes \varepsilon : \mathcal{D}''(m|n) \otimes U(\mathfrak{gl}(m|n))\to \mathcal{D}''(m|n)$, where $\varepsilon$ is the counit map of $U(\mathfrak{gl}(m|n))$. In particular $(1\otimes \varepsilon)(p \otimes x) = 0$ for $p\in \mathcal{D}''(m|n)$ and $x\in \mathfrak{gl}(m|n)$ and $(1\otimes \varepsilon)(p\otimes 1) = p$. 
    Consider the homomorphism $(1\otimes \varepsilon) \circ \varphi^s: U(\mathfrak{sl}(m+1|n)) \to \mathcal{D}''(m|n)$. Corollary~\ref{cor: sl} gives us that this homomorphism is generated by
    \begin{align*}
        e_{ab} &\mapsto t_a\partial_b \tag{for $a\ne b$}\\
        e_{00}-(-1)^{|a|}e_{aa} &\mapsto t_{0}\partial_0- (-1)^{|a|}t_a\partial_a.
    \end{align*}
    We claim that this homomorphism has a left inverse $\psi$. Letting $\mathcal{D}'''(m|n)$ be the algebra generated by $\{t_a\partial_b : a,b\in \hat{I}\}$, we first define $\psi': \mathcal{D}'''(m|n)\to U(\mathfrak{gl}(m+1|n))$ and then its restriction $\psi: \mathcal{D}''(m|n)\to U(\mathfrak{sl}(m+1|n))$ will be a left inverse to $(1\otimes \varepsilon)\circ \varphi^s$. The map $\psi'$ is generated by
    \begin{align*}
        t_a\partial_b &\mapsto e_{ab}
    \end{align*}
    for $a,b\in \hat{I}$. To check $\psi$ is a homomorphism, we check $\psi'([t_a\partial_b, t_c\partial_d]) = [e_{ab},e_{cd}]$ for all $a,b,c,d\in \hat{I}$.

    If $a\ne d, b\ne c$, then we have
    \begin{align*}
        [t_a\partial_b, t_c\partial_d] &= t_a\partial_bt_c\partial_d - (-1)^{(|a|+|b|)(|c|+|d|)} t_c\partial_d t_a\partial_b\\
        &=(-1)^{|b||c|}t_a t_c \partial_b\partial_d - (-1)^{(|a||c| + |b||c| + |a||c|)}t_ct_a \partial_d \partial_b\\
        &= (-1)^{|b||c|}t_at_c \partial_b t_d- (-1)^{|b||c|}t_at_c \partial_b t_d = 0.
    \end{align*}
    Now suppose $a\ne d$, $b=c$. Note $[e_{ab},e_{bd}] = e_{ad}$. Thus, we show $[t_{a}\partial_b, t_b\partial_d] = t_a\partial_d$. Indeed, we have
    \begin{align*}
        [t_a\partial_b, t_b\partial_d] &= t_a\partial_b t_b \partial_d - (-1)^{(|a|+|b|)(|b|+|d|)}t_b\partial_d t_a\partial_b\\
        &= t_a\partial_b t_b \partial_d -(-1)^{|b|} t_a t_b\partial_b \partial_d\\
        &= t_a\partial_d.
    \end{align*}
    If $a=d, b\ne c$, reversing the order gives the previous case.
    If $a=d, b=c$, then $[e_{ab}, e_{ba}] = e_{aa}- (-1)^{|a|+|b|}e_{bb}$. We also have
    \begin{align*}
        [t_a\partial_b, t_b\partial_a] &= t_a\partial_b t_b\partial_a - (-1)^{|a|+|b|}t_b\partial_a t_a\partial_b \\
        &= t_a (1+(-1)^{|b|}t_b\partial_b)\partial_a - (-1)^{|a|+|b|} t_b\partial_a t_a\partial_b\\
        &= t_a\partial_a - (-1)^{|a| + |b|} t_b\partial_b (\partial_at_a - (-1)^{|a|}t_a\partial_a)\\
        &= t_a\partial_a - (-1)^{|a|+|b|}t_b\partial_b.
    \end{align*}
    Thus $\psi'$, hence $\psi$ is a homomorphism. It is easily checked that $\psi \circ (1\otimes \varepsilon) \circ \varphi^s=\mathrm{Id}$, so $\varphi^s$ is injective. This completes the proof.
\end{proof}

\bibliographystyle{amsalpha}
\bibliography{references}   
\vfill
\newpage
\appendix
\section{Proof of Theorem \texorpdfstring{\ref{thm: homomorphism}}{3}}\label{proof}
In this appendix we provide the proof of Theorem \ref{thm: homomorphism}. 
\begin{proof}[Proof of Theorem \ref{thm: homomorphism}]
    Clearly $\varphi$ maps even elements to even elements and odd elements to odd elements. Thus, we only need to check that for basis elements $x$ and $y$, we have
    \begin{align}
    \label{eqn: commutators}
        [\varphi(x),\varphi(y)] = \varphi([x,y]).
    \end{align}
    We have multiple cases to check. Here $a,b,c,d$ will always denote elements of $I$. Before we start verifying the cases, note that since $R$ is central, all brackets with it are zero.
    \begin{description}
        \item[Case 1] $[e_{ab},e_{ba}]$. \vskip 5pt 
        \noindent If $a = b$ then both brackets in (\ref{eqn: commutators}) are obviously zero. 
        
        \noindent If $a\ne b$ then $[e_{ab},e_{ba}] = e_{aa} - (-1)^{|a|+|b|}e_{bb}$. Thus, 
        \begin{align*}
        \varphi([e_{ab},e_{ba}]) &= \Big(t_a\partial_a-(-1)^{|a|+|b|}t_b\partial_b\Big)\otimes 1 + 1\otimes \Big(e_{aa}-(-1)^{|a|+|b|}e_{bb}\Big) 
        \end{align*}
    Now we show
    \[[t_a\partial_b, t_b\partial_a] = t_a\partial_a - (-1)^{|a|+|b|}t_b\partial_b\]
    for $a\ne b$.
    The left hand side is
    \begin{align}\label{eqn: commutator of partials I}
        t_a\partial_bt_b\partial_a - (-1)^{|a||b|}t_b\partial_at_a\partial_b.
    \end{align}
    Now $\partial_bt_b = 1 + (-1)^{|b|}t_b\partial_b$, so $t_a\partial_bt_b\partial_a = t_a\partial_a + (-1)^{|b|}t_at_b\partial_b\partial_a$. Using supercommutativity, this becomes $t_a\partial_a + (-1)^{|b|}t_a\partial_a t_b\partial_b$. A similar process with the second term gives us $t_b\partial_at_a\partial_b =t_b\partial_b +(-1)^{|a|}t_a\partial_at_b\partial_b$. Plugging these back into (\ref{eqn: commutator of partials I}), we get
    \begin{align*}
        [t_a\partial_b, t_b\partial_a] &= t_a\partial_a+ (-1)^{|b|}t_a\partial_a t_b\partial_b - (-1)^{|a|+|b|}t_b\partial_b - (-1)^{|b|}t_a\partial_a t_b\partial_b\\
        &=t_a\partial_a - (-1)^{|a|+|b|}t_b\partial_b.
    \end{align*}
    Thus,
    \begin{align*}
        [\varphi(e_{ab}),\varphi(e_{ba})] &= [t_a\partial_b\otimes 1 + 1\otimes e_{ab},t_b\partial_a\otimes 1 + 1\otimes e_{ba}]\\
        &= \Big(t_a\partial_a-(-1)^{|a|+|b|}t_b\partial_b\Big)\otimes 1 + 1\otimes \Big(e_{aa}-(-1)^{|a|+|b|}e_{bb}\Big)\\
        &= \varphi([e_{ab},e_{ba}]),
    \end{align*}
    verifying (\ref{eqn: commutators}).

        \item[Case 2] $[e_{ab},e_{bc}]$, $a\ne c$\vskip 5pt
        \noindent 
        First, we show
        \[[t_a\partial_b, t_b\partial_c] = t_a\partial_c.\]
        The left hand side is 
        \begin{align}\label{eqn: commutator of partials II}
            t_a\partial_bt_b\partial_c - (-1)^{(|a|+|b|)(|b| + |c|)}t_b\partial_ct_a\partial_b.
        \end{align}
        We have
        \begin{align*}
            t_a\partial_bt_b\partial_c &= (-1)^{|b||c|}t_a\partial_b \partial_c t_b\\
            &= t_a\partial_c \partial_bt_b.
        \end{align*}
        Also,
        \begin{align*}
            t_b\partial_ct_a\partial_b &= (-1)^{|b||c|}\partial_c t_bt_a\partial_b\\
            &= (-1)^{|b||c| + |b||a|}\partial_ct_at_b \partial_b\\
            &= (-1)^{|b||c| + |b||a| + |a||c|}t_a\partial_c t_b\partial_b.
        \end{align*}
        Thus $(-1)^{(|a|+|b|)(|b| + |c|)}t_b\partial_c t_a\partial_b = (-1)^{|b|^2}t_a\partial_c t_b\partial_b = (-1)^{|b|}t_a\partial_c t_b\partial_c$. Plugging in to (\ref{eqn: commutator of partials II}), we get
        \begin{align*}
            [t_a\partial_b, t_b\partial_c] = t_a\partial_c(\partial_bt_b - (-1)^{|b|}t_b\partial_b) = t_a\partial_c.
        \end{align*}
        Then, we have
        \begin{align*}
            [\varphi(e_{ab}), \varphi(e_{bc})] &= [t_a\partial_b \otimes 1 + 1 \otimes e_{ab}, t_b\partial_c \otimes 1 + 1 \otimes e_{bc}] \\
            &= t_a\partial_c + 1 \otimes e_{ac}  \\
            &= \varphi(e_{ac})  = \varphi([e_{ab}, e_{bc}]),
        \end{align*}
        as desired.

        \item[Case 3] $[e_{ab},e_{ca}]$, $c\ne b$\vskip 5pt
        \noindent Swapping the order of the two terms gives us Case 2. Note that all signs picked up in the process get canceled.

        \item[Case 4] $[e_{ab},e_{cd}]$, $c\ne b$ and $a\ne d$\vskip 5pt
        \noindent 
        First, we show
        \begin{align*}
            [t_{a}\partial_b, t_c\partial_d] = 0.
        \end{align*}
        The left hand side is
        \begin{align}\label{eqn: commutator of partials III}
            t_a\partial_bt_c\partial_d - (-1)^{(|a|+|b|)(|c|+|d|)}t_c\partial_dt_a\partial_b.
        \end{align}
        We have
        \begin{align*}
            t_c\partial_d t_a\partial_b &= (-1)^{|a|(|c|+|d|)}t_at_c\partial_d \partial_b\\
            &= (-1)^{|a|(|c|+|d|)+|b|(|c|+|d|)}t_a\partial_bt_c\partial_d\\
            &= (-1)^{(|a|+|b|)(|c|+|d|)}t_a\partial_bt_c\partial_d.
        \end{align*}
        Plugging back into (\ref{eqn: commutator of partials III}), we get the desired result. Thus, it follows that $\varphi(e_{ab})$ commutes with $\varphi(e_{cd})$. Hence, $[\varphi(e_{ab}), \varphi(e_{cd})] = 0 = \varphi([e_{ab}, e_{cd}])$ as needed.

        \item[Case 5] $[e_{ab}, e_{b0}]$\vskip 5pt
        \noindent
        Since $R$ bracketed with anything is zero, we have
        \begin{align*}
        [\varphi(e_{ab}),\varphi(e_{b0})] &= \left[t_{a}\partial_b\otimes 1 + 1\otimes e_{ab},t_b\partial_{0}\otimes 1 -\sum_{j} (-1)^{|b||j|}\frac{t_j}{t_0}\otimes e_{bj}\right]\\
            &=[t_{a}\partial_b\otimes 1,t_b\partial_{0}\otimes 1] - (-1)^{|b|}\left[t_{a}\partial_b\otimes 1 , \frac{t_b}{t_0}\otimes e_{bb}\right] \\ &\phantom{=} - \sum_{j \ne a} (-1)^{|b||j|}\left[1\otimes e_{ab}, \frac{t_{j}}{t_0}\otimes e_{bj}\right] \\ 
            &\phantom{=}-(-1)^{|a||b|}\left[1\otimes e_{ab},\frac{t_{a}}{t_0}\otimes e_{ba}\right]\\
            &= t_{a}\partial_0 \otimes 1 - (-1)^{|b|}\frac{t_a}{t_0} \otimes e_{bb} +(-1)^{|b|}\frac{t_a}{t_0} \otimes e_{bb} \\ 
            &\phantom{=} - \sum_{j} (-1)^{|a||j|} \frac{t_j}{t_0} \otimes e_{aj} \\
            &= t_{a}\partial_0 \otimes 1 - \sum_{j} (-1)^{|a||j|} \frac{t_j}{t_0} \otimes e_{aj}\\
            &= \varphi([e_{ab},e_{b0}])
        \end{align*} as desired. We now prove each of the four brackets used here. That $[t_a\partial_b\otimes 1, t_b\partial_0 \otimes 1] = t_a\partial_0\otimes 1$ was proved in Case 2. Next, we have
        \begin{align*}
            \left[t_{a}\partial_b\otimes 1 , \frac{t_b}{t_0}\otimes e_{bb}\right] &= \left[t_a\partial_b, \frac{t_b}{t_0}\right]\otimes e_{bb}
        \end{align*}
        since $e_{bb}$ is even. We have
        \begin{align*}
            \left[t_a\partial_b, \frac{t_b}{t_0}\right] &= t_a\partial_b \frac{t_b}{t_0} - (-1)^{|b|(|a|+|b|)}\frac{t_b}{t_0}t_a\partial_b\\
            &= \frac{t_a}{t_0}\partial_bt_b - (-1)^{|b|(|a|+|b|)+ |a||b|}\frac{t_a}{t_0}t_b\partial_b\\
            &= \frac{t_a}{t_0}(\partial_bt_b - (-1)^{|b|}t_b\partial_b) = \frac{t_a}{t_0}.
        \end{align*}
        This verifies the second bracket. The third bracket is
        \[\left[1\otimes e_{ab}, \frac{t_j}{t_0}\otimes e_{bj}\right]\]
        for $j\ne a$. Note that $e_{bj}e_{ab} = 0$. Thus the bracket is
        \[(-1)^{(|a|+|b|)|j|} \frac{t_j}{t_0}\otimes e_{aj}.\]
        Multiplying by $(-1)^{|b||j|}$ gives
        \[(-1)^{|a||j|}\frac{t_j}{t_0}\otimes e_{aj},\]
        as desired. The final bracket is
        \begin{align*}
        \left[1\otimes e_{ab},\frac{t_{a}}{t_0}\otimes e_{ba}\right] &= (-1)^{|a|(|a|+|b|)}\frac{t_a}{t_0}\otimes e_{aa} - (-1)^{(|a|+|b|)(|b|)}\frac{t_a}{t_0}\otimes e_{bb}.
        \end{align*}
        Multiplying by $(-1)^{|a||b|}$, we get
        \[(-1)^{|a||a|}\frac{t_a}{t_0}\otimes e_aa - (-1)^{|b|}\frac{t_a}{t_0}\otimes e_{bb},\]
        as desired.

        \item[Case 6] $[e_{ab}, e_{0a}]$ \vskip 5pt
        \noindent
        We show
        \begin{align}\label{commutator of partials result}
            [t_a\partial_b, t_0\partial_a] = -(-1)^{(|a|+|b|)|a|}t_0\partial_b.    
        \end{align}
        The left hand side is
        \begin{align}\label{eqn: commutator of partials IV}
            t_a\partial_bt_0\partial_a - (-1)^{(|a|+|b|)|a|}t_0\partial_at_a\partial_b.
        \end{align}
        We have $t_a\partial_bt_0\partial_a = (-1)^{|a||b|}t_0\partial_bt_a\partial_a = (-1)^{|a|}(-1)^{(|a|+|b|)|a|}t_0\partial_b t_a\partial_a$. Also $t_0\partial_at_a\partial_b = t_0\partial_b\partial_at_a$. Plugging these back into (\ref{eqn: commutator of partials IV}), we get (\ref{commutator of partials result}). Then, we have
        \begin{align*}
            [\varphi(e_{ab}), \varphi(e_{0a})] &= [t_a\partial_b \otimes 1 + 1 \otimes e_{ab}, t_0\partial_a \otimes 1] \\
            &= -t_0\partial_b \otimes 1 = \varphi(-e_{0b}) \\
            &= \varphi([e_{ab}, e_{0a}]),
        \end{align*}as desired.
        
        \item[Case 7] $[e_{ab}, e_{00}]$\vskip 5pt
        \noindent We have $[e_{ab}, e_{00}]=0$, so $\varphi([e_{ab}, e_{00}])=0$. Note that $t_0\partial_0$ commutes with $t_a\partial_b$, so $[t_a\partial_b, t_0\partial_0]=0$. Since $R$ is central, we have
        \begin{align*}
            [\varphi(e_{ab}),\varphi(e_{00})] &= [t_a\partial_b\otimes 1 + 1\otimes e_{ab}+\delta_{ab}(-1)^{|a||b|}R, t_0\partial_0\otimes 1 +R]\\
            &=0 = \varphi([e_{ab},e_{00}]).
        \end{align*}
        \item[Case 8] $[e_{a0},e_{b0}]$\vskip 5pt
        \noindent
        We have 
        \begin{align*}
        [\varphi(e_{a0}), \varphi(e_{b0})] &= \left[t_a\partial_0 \otimes 1 - \sum_{j} (-1)^{|a||j|} \frac{t_j}{t_0} \otimes e_{aj}, t_b\partial_0 \otimes 1 - \sum_{j} (-1)^{|b||j|} \frac{t_j}{t_0} \otimes e_{bj}\right].
        \end{align*}
        We evaluate each sub-bracket one by one. It is easily checked that $[t_a\partial_0\otimes 1, t_b\partial_0\otimes 1]=0$. We have
        \begin{align*}
            \left[t_a\partial_0 \otimes 1, \frac{t_j}{t_0}\otimes e_{bj}\right] &= t_a\partial_0 \frac{t_j}{t_0}\otimes e_{bj} - (-1)^{|a||b|+ (|b| + |j|)|a|}\frac{t_j}{t_0}t_a\partial_0 \otimes e_{bj} \\
            &= t_at_j \left(\partial_0 \frac{1}{t_0}\right)\otimes e_{bj} - t_at_j\left(\frac{1}{t_0}\partial_0\right) \otimes e_{bj}.
        \end{align*}
        Similar to the identity $\partial_0t_0 - t_0\partial_0 = 1$, we have $\partial_0\frac{1}{t_0}-\frac{1}{t_0}\partial_0 = -\frac{1}{t_0^2}$, so the bracket evaluates to $-t_at_j$. Thus
        \[\left[t_a\partial_0\otimes 1, - \sum_{j}(-1)^{|b||j|}\frac{t_j}{t_0}\otimes e_{bj}\right] = \sum_{j}(-1)^{|b||j|}\frac{t_at_j}{t_0^2}\otimes e_{bj}.\]
        Next we evaluate 
        \[\left[\frac{t_i}{t_0}\otimes e_{ai}, \frac{t_j}{t_0}\otimes e_{bj}\right]\]
        for $i,j\in I$. If $i\ne b$ then $e_{ai}e_{bj}=0$. If $j\ne a$ then $e_{bj}e_{ai}=0$. Thus
        \begin{align*}
            \left[\frac{t_i}{t_0}\otimes e_{ai}, \frac{t_j}{t_0}\otimes e_{bj}\right] &= \delta_{ib}(-1)^{|j|(|a|+|b|)} \frac{t_bt_j}{t_0^2}\otimes e_{aj} - \delta_{aj}(-1)^{|a|||b| + (|b|+|a)|i|}\frac{t_at_i}{t_0^2}\otimes e_{bi}.
        \end{align*}
        Thus
        \begin{align*}
            \left[-\sum_{i}(-1)^{|a||i|} \frac{t_j}{t_0}\otimes e_{ai}, -\sum_{j}(-1)^{|b||j|}\frac{t_j}{t_0}\otimes e_{bj}\right]&= (-1)^{|a||b|}\sum_{j}(-1)^{|a||j|}\frac{t_bt_j}{t_0^2}\otimes e_{aj}\\
            &\phantom{=} -\sum_j (-1)^{|b||j|}\frac{t_at_j}{t_0^2}\otimes e_{bj}.
        \end{align*}
        Finally, 
        \begin{align*}
        \left[\frac{t_j}{t_0}\otimes e_{aj}, t_b\partial_0\otimes 1\right] &= (-1)^{(|a|+|j|)|b| + |j||b|} t_bt_j\left(\frac{1}{t_0}\partial_0\right) \otimes e_{aj} - (-1)^{|a||b|} t_bt_j\left(\partial_0 \frac{1}{t_0}\right)\otimes e_{aj}\\
        &= (-1)^{|a||b|} \frac{t_bt_j}{t_0^2} \otimes e_{aj}.
        \end{align*}
        Thus
        \[\left[ \sum_{j} (-1)^{|a||j|} \frac{t_j}{t_0} \otimes e_{aj}, t_b\partial_0 \otimes 1\right] =  - (-1)^{|a||b|}\sum_j (-1)^{|a||j|}\frac{t_bt_j}{t_0^2}\otimes e_{aj}.\]
        Adding everything back together, we see that everything cancels, so
        \[[\varphi(e_{a0}), \varphi(e_{b0})] = 0 = \varphi([e_{a0,e_{b0}}]).\]

        \item[Case 9] $[e_{a0},e_{0b}]$\vskip 5pt
        \noindent
       
        Now
        \begin{align*}
            [\varphi(e_{a0}), \varphi(e_{0b})] &= \left[t_a\partial_0 \otimes 1 - \sum_{j} (-1)^{|a||j|} \frac{t_j}{t_0} \otimes e_{aj}, t_0\partial_b \otimes 1\right]\\
            &= [t_a\partial_0, t_0\partial_b] \otimes 1 - \sum_{j}(-1)^{|a||j|}\left[\frac{t_j}{t_0} \otimes e_{aj} ,t_0\partial_b \otimes 1\right].
        \end{align*}
        But
        \begin{align*}
            \left[\frac{t_j}{t_0} \otimes e_{aj}, t_0\partial_b \otimes 1\right] &= \left(\frac{t_j}{t_0} \otimes e_{aj}\right)(t_0\partial_b \otimes 1) - (-1)^{|a||b|}(t_0\partial_b \otimes 1)\left(\frac{t_j}{t_0} \otimes e_{aj}\right) \\
            &= (-1)^{(|a|+|j|)|b|}\left(\frac{t_j}{t_0}t_0\partial_b\right) \otimes e_{aj} - (-1)^{|a||b|}(t_0\partial_b)(t_j/t_0) \otimes e_{aj} \\
            &= (-1)^{|a||b|}[(-1)^{|j||b|}(t_j\partial_b) -(\partial_bt_j)] \otimes e_{aj}
        \end{align*}
        If $j = b$ the expression in the brackets is
        \[-(\partial_b t_b - (-1)^{|b|}t_b\partial_b) = -1.\]
        If $j\ne b$ the expression in the brackets is
        \[(-1)^{|j||b|}(t_j\partial_b)-(\partial_bt_j) = (-1)^{|j||b|}(t_j\partial_b)-(-1)^{|j||b|}(t_j\partial_b) = 0.\]
        Plugging these back into the original expression, we get
        \begin{align*}
            [\varphi(e_{a0}), \varphi(e_{0b})] &= [t_a\partial_0, t_0\partial_b]\otimes 1 + 1\otimes e_{ab}.
        \end{align*}
        If $a\ne b$, from Case 2 we have $[t_a\partial_0,t_0\partial_b] = t_a\partial_b$, so
        \[ [\varphi(e_{a0}), \varphi(e_{0b})] = t_a\partial_b\otimes 1 + 1\otimes e_{ab} = \varphi([e_{a0},e_{0b}]). \]
        If $a = b$, we have
        \begin{align*}
            [t_a\partial_0, t_0\partial_a] &= t_a\partial_0t_0\partial_a - (-1)^{|a|}t_0\partial_a t_a\partial_0\\
            &=\partial_0t_0 t_a\partial_a - (-1)^{|a|} t_0\partial_0 \partial_a t_a\\
            &= \partial_0t_0 t_a\partial_a - (-1)^{|a|}t_0\partial_0 (1+(-1)^{|a|} t_a\partial_a)\\
            &= (-1)^{|a|}t_0\partial_0 + (\partial_0 t_0 - t_0\partial_0)t_a\partial_a\\
            &= t_a\partial_a - (-1)^{|a|}t_0\partial_0.
        \end{align*}
        Thus
        \begin{align*}
            [\varphi(e_{a0}), \varphi(e_{0a})] = t_a\partial_0 \otimes 1 - (-1)^{|a|}t_0\partial_0 \otimes 1 + 1\otimes e_{aa} = \varphi(e_{aa} - (-1)^{|a|}e_{00}) = \varphi([e_{a0},e_{0a}]).
        \end{align*}
        
        \item[Case 10] $[e_{a0},e_{00}]$\vskip 5pt
        \noindent 
        Since $R$ is central,
        \[ [\varphi(e_{a0}), \varphi(e_{00})] = \left[t_a\partial_{0}\otimes 1 -\sum_{j} (-1)^{|a||j|}\frac{t_j}{t_0}\otimes e_{aj}, t_0\partial_0 \otimes 1\right] \]
        Now,
        \[[t_a\partial_0, t_0\partial_0] = (t_a\partial_0)(t_0\partial_0) - (t_0\partial_0)(t_a\partial_0) = t_a(\partial_0t_0 - t_0\partial_0) \partial_0 = t_a\partial_0.\]
        Also,
        \[ \left[\frac{t_j}{t_0} \otimes e_{aj}, t_0\partial_0 \otimes 1\right] = t_j(\partial_0t_0- t_0 \partial_0)\frac{1}{t_0}\otimes e_{aj} = \frac{t_j}{t_0} \otimes e_{aj}.\]
        Thus
        \[[\varphi(e_{a0}), \varphi(e_{00})] = t_a\partial_0 \otimes 1 - \sum_j (-1)^{|a||j|}\frac{t_j}{t_0} \otimes e_{aj} = \varphi([e_{a0},e_{00}]).\]
        \item[Case 11] $[e_{0a},e_{0b}]$\vskip 5pt
        \noindent We have 
        \[\varphi([e_{0a},e_{0b}])=\varphi(0)=0.\]
        Also
        \begin{align*}
            [\varphi(e_{0a}),\varphi(e_{0b})] &= [t_0\partial_a\otimes 1, t_0\partial_b\otimes 1]\\
            &= t_0^2 (\partial_a\partial_b - (-1)^{|a||b|}\partial_b\partial_a) \otimes 1 \\
            &= 0
        \end{align*}

        \item[Case 12] $[e_{0b},e_{00}]$\vskip 5pt
        \noindent We have $[e_{0b},e_{00}]=-e_{0b}$, so $\varphi([e_{0b},e_{00}])=-t_0\partial_b \otimes 1$. 
        Since $R$ is central, we also have
        \begin{align*}
            [\varphi(e_{0b}),\varphi(e_{00})] &= [t_0\partial_b\otimes 1, t_0\partial_0 \otimes 1]\\
            &= (-t_0\partial_0t_0\partial_b + t_0\partial_bt_0\partial_0)\otimes 1\\
            &= -( t_0(\partial_0t_0-t_0\partial_0) \partial_b)\otimes 1\\
            &= - t_0\partial_b\otimes 1 = \varphi([e_{0b}, e_{00}]).
        \end{align*}
        \item[Case 13] $[e_{0c}, e_{ab}]$, $c\ne a$\vskip 5pt \noindent
        We have $[e_{0c},e_{ab}]=0$ and
        \begin{align*}
            [\varphi(e_{0c}),\varphi(e_{ab})] &= [t_0\partial_c\otimes 1, t_a\partial_b \otimes 1 + 1\otimes e_{ab} + \delta_{ab}(-1)^{|a||b|}R]\\
            &= [t_0\partial_c, t_a\partial_b] \otimes 1.
        \end{align*}
        Now we have
        \begin{align*}
            [t_0\partial_c, t_a\partial_b]&=t_0\partial_c t_a\partial_b - (-1)^{|c|(|a|+|b|)}t_a\partial_b t_0\partial_c\\
            &= t_0\partial_c t_a\partial_b - (-1)^{|c|(|a|+|b|)+ |c|(|a|+|b|)}t_0\partial_c t_a\partial_b\\
            &= 0.
        \end{align*}
        Thus $[\varphi(e_{0c}), \varphi(e_{ab})] = 0 = \varphi([e_{0c}, e_{ab}])$.
        \item[Case 14] $[e_{ab}, e_{c0}]$, $b\ne c$\vskip 5pt \noindent
        We have $[e_{ab}, e_{c0}]=0$. Also,
        \begin{align*}
            [\varphi(e_{ab}), \varphi(e_{c0})] &= \left[t_a\partial_b\otimes 1 + 1\otimes e_{ab} + \delta_{ab}(-1)^{|a||b|}R, t_c\partial_0 \otimes 1- \sum_{j}(-1)^{|c||j|} \frac{t_j}{t_0}\otimes e_{cj}\right]\\
            &=[t_a\partial_b, t_c\partial_0]\otimes 1-\sum_{j}(-1)^{|c||j|}\left[t_a\partial_b\otimes 1, \frac{t_j}{t_0}\otimes e_{cj}\right]\\&\hspace{20mm} - \sum_{j}(-1)^{|c||j|}\left[1\otimes e_{ab}, \frac{t_j}{t_0}\otimes e_{cj}\right].
        \end{align*}
        Now we evaluate the brackets. We have
        \begin{align*}
            [t_a\partial_b, t_c\partial_0] &= t_a\partial_bt_c\partial_0 - (-1)^{(|a|+|b|)|c|} t_c\partial_0 t_a\partial_b\\
            &= (-1)^{(|a|+|b|)|c|} t_c\partial_0 t_a\partial_b - (-1)^{(|a|+|b|)|c|} t_c\partial_0 t_a\partial_b\\
            &= 0.
        \end{align*}
        Next, we have
        \begin{align*}
            \left[t_a\partial_b\otimes 1, \frac{t_j}{t_0}\otimes e_{cj}\right] &= t_a\partial_b \frac{t_j}{t_0}\otimes e_{cj} - (-1)^{(|a|+|b|)|c|} \left(\frac{t_j}{t_0}\otimes e_{cj}\right)(t_a\partial_b\otimes 1)\\
            &= \frac{1}{t_0}t_a\partial_bt_j\otimes e_{cj} - (-1)^{(|a|+|b|)|j|}\frac{1}{t_0} t_j t_a\partial_b \otimes 1.
        \end{align*}
        If $j\ne b$, then this becomes
        \[\frac{1}{t_0}t_a\partial_bt_j\otimes e_{cj} - \frac{1}{t_0}t_a\partial_bt_j\otimes e_{cj} = 0.\]
        If $j = b$ this becomes
        \[\frac{t_a}{t_0} (\partial_bt_b - (-1)^{|b|}t_b\partial_b)\otimes e_{cb} = \frac{t_a}{t_0}\otimes e_{cb}.\]
        Finally, we have
        \begin{align*}
            \left[1\otimes e_{ab}, \frac{t_j}{t_0}\otimes e_{cj}\right] &= (-1)^{(|a|+|b|)|j|}\frac{t_j}{t_0}\otimes (e_{ab}e_{cj}) - (-1)^{(|a|+|b|)|c|}\frac{t_j}{t_0}\otimes e_{cj}e_{ab}\\
            &=  - (-1)^{(|a|+|b|)|c|}\frac{t_j}{t_0}\otimes e_{cj}e_{ab}
        \end{align*}
        Now $e_{cj}e_{ab}$ is nonzero if and only if $j=a$. When $j=a$, the above expression becomes
        \begin{align*}
            (-1)^{(|a|+|b|)|c|}\frac{t_a}{t_0}\otimes e_{cb}.
        \end{align*}
        Plugging these back into the orginal expression, we get
        \begin{align*}
            [\varphi(e_{ab}), \varphi(e_{c0})]  = - (-1)^{|c||b|}\frac{t_a}{t_0}\otimes e_{cj} + (-1)^{|c||a|+ (|a|+|b|)|c|}\frac{t_a}{t_0}\otimes e_{cb} = 0 = \varphi([e_{ab}, e_{c0}]).
        \end{align*}

    \end{description}
    This completes the verification and finishes the proof. 
\end{proof}

\section{Formulas for the images of certain elements under \texorpdfstring{$\varphi$}{varphi}}\label{long computations}
For the following section, we will write these Gelfand invariants in terms of the following special elements of $U(\gl(m|n))$. Set $r_0^{\gl(m|n)}(a,b)=\delta_{ab}(-1)^{|a||b|}$. For $k\ge 0$ and $a,b\in I$, let
\begin{align}\label{r_k}
r_{k+1}^{\gl(m|n)}(a,b) = \sum_{i_1,\ldots, i_k\in I} (-1)^{|i_1|+\cdots + |i_k|}e_{ai_1}\otimes e_{i_1i_{2}}\otimes \cdots \otimes e_{i_kb}.
\end{align}
Then we have
\[G_k^{\gl(m|n)} = \sum_{i\in I } r_k^{\gl(m|n)}(i,i)\]
for $k\ge 1$. We compute the images of the elements $r_{k}^{\gl(m|n)}(a, b)$ for all $k, a, b$. The proofs of these formulas are not dependent on the results in previous sections of this paper. In this way, we can obtain an alternative (computational) proof of Theorem~\ref{thm : gelfand}.

First, notice the following identity:
\[r_{k+1}^{\gl(m+1|n)}(a,b) = \sum_{i\in \hat{I}} (-1)^{|i|}r_{k}^{\gl(m+1|n)}(a,i)e_{ib}.\]
Indeed, we have
\begin{align*}
    \sum_{i\in \hat{I}} (-1)^{|i|}r_{k}^{\gl(m+1|n)}(a,i)e_{ib} &= \sum_{i\in \hat{I}}\sum_{i_{1},\ldots, i_{k-1}\in \hat{I}} (-1)^{|i_1|+\cdots+|i_{k-1}|+|i|} e_{ai_1}\cdots e_{i_{k-1}i} e_{ib}\\
    &= r_{k+1}^{\gl(m+1|n)}(a,b).
\end{align*}

Define 
\[f_{s}(a,b)=\sum_{i\in I}(-1)^{|i|}t_a\partial_i\otimes r_{s-1}^{\gl(m|n)}(i,b).\]
Also note that we have $f_{1}(a,b) = t_a\partial_b$.

To simplify the statement of the following theorem, we first introduce some more notation. Let 

\begin{align*}
    A_k(s) &= \sum_{g=0}^{k-s}\binom{k}{g} R^g R_2^{k-s-g}\\
    B(s) &= \sum_{i,j\in I} (-1)^{|i|(1+|j|)}\partial_i t_j \otimes r_{s-1}^{\gl(m|n)}(i,j)\\
    D_k(a,b) &= \sum_{g=0}^{k}\binom{k}{g}R^g \left(1\otimes r_{k-g}^{\gl(m|n)}(a,b)\right)\\
    E_k(a) &= \sum_{g=0}^{k-1}\left(\binom{k}{g}R^g\sum_{j>0} (-1)^{|a||j|}\frac{t_j}{t_0}\otimes r_{k-g}^{\gl(m|n)}(a,j)\right)\\
    F_k(a,b) &= \sum_{s=1}^k f_s(a,b)A_k(s).
\end{align*}
Then, we can find the images of the $r_k^{\gl(m+1|n)}$.
\begin{thm}\label{thm: images of r_k}
    We have
    \begin{align*}
        \varphi(r_k^{\gl(m+1|n)}(a,b)) &= F_k(a,b) + D_k(a,b)\\
        \varphi(r_k^{\gl(m+1|n)}(a,0)) &= A_k(1)(t_a\partial_0\otimes 1) - E_k(a) - \left(\frac{t_a}{t_0}\otimes 1\right)\sum_{s=2}^k A_k(s)B(s)\\
        \varphi(r_k^{\gl(m+1|n)}(0,b)) &= F_k(0,b)\\
        \varphi(r_k^{\gl(m+1|n)}(0,0)) &= A_k(1)(t_0\partial_0\otimes 1) + R^k -\sum_{s=2}^k A_k(s)B(s).
    \end{align*}
\end{thm}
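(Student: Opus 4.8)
The plan is to prove all four formulas simultaneously by induction on $k$, using the recursion $r_{k+1}^{\gl(m+1|n)}(a,b) = \sum_{i\in \hat I}(-1)^{|i|}r_k^{\gl(m+1|n)}(a,i)e_{ib}$ established just above the theorem. Since $\varphi$ is an algebra homomorphism, this reduces to multiplying the (inductively known) image of $r_k^{\gl(m+1|n)}(a,i)$ on the right by $\varphi(e_{ib})$ and summing over $i\in \hat I$, splitting off the $i=0$ term from the $i\in I$ terms. Concretely, $\varphi(r_{k+1}^{\gl(m+1|n)}(a,b)) = \sum_{i\in I}(-1)^{|i|}\varphi(r_k^{\gl(m+1|n)}(a,i))\varphi(e_{ib}) + \varphi(r_k^{\gl(m+1|n)}(a,0))\varphi(e_{0b})$, and one plugs in the explicit formulas for $\varphi(e_{ib})$ from Theorem~\ref{thm: homomorphism}: for $b\in I$ one uses $\varphi(e_{ib}) = t_i\partial_b\otimes 1 + 1\otimes e_{ib} + \delta_{ib}(-1)^{|i||b|}R$ (when $i\in I$) and $\varphi(e_{0b}) = t_0\partial_b\otimes 1$; for the $b=0$ rows one uses $\varphi(e_{i0})$ and $\varphi(e_{00})$ respectively. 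I would set up the base case $k=1$ directly from the definitions: $r_1^{\gl(m+1|n)}(a,b) = e_{ab}$ so its image is read off from Theorem~\ref{thm: homomorphism}, and one checks it matches $F_1(a,b)+D_1(a,b)$ etc. (noting $f_1(a,b)=t_a\partial_b$, $A_1(1)=1$, and $A_1(s)=0$ for $s\ge 2$).

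The bulk of the work is bookkeeping: one must show that the right-multiplication operations on the auxiliary quantities $F_k, D_k, E_k, A_k, B, f_s$ obey the recursions that make the $k\mapsto k+1$ step close up. The key sub-identities I would isolate and prove first, as lemmas, are: (i) how $A_k(s)$ interacts with $R$ and $R_2$ under the binomial shift, e.g. $A_{k+1}(s) = R\,A_k(s) + A_k(s-1)$-type relations together with $A_{k+1}(k+1)=1$; (ii) how $f_s(a,b)$ relates to $f_{s+1}$ via right multiplication by the $t_i\partial_b\otimes 1$ and $1\otimes e_{ib}$ pieces, which is where the definition $f_s(a,b)=\sum_{i\in I}(-1)^{|i|}t_a\partial_i\otimes r_{s-1}^{\gl(m|n)}(i,b)$ and the analogous recursion for $r_s^{\gl(m|n)}$ get used; and (iii) the bracket/commutation relations in $\mcD'(m|n)$ needed to move $t_0\partial_b$, $t_j/t_0$, and $\partial_i t_j$ past each other — these are exactly the identities catalogued in Lemma~\ref{lem: identities for D'(m|n)} and the computations already done in Appendix~\ref{proof} (particularly the manipulations of $\partial_b t_b$, $t_0\partial_0$, $t_a/t_0$). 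The term $B(s)=\sum_{i,j\in I}(-1)^{|i|(1+|j|)}\partial_i t_j\otimes r_{s-1}^{\gl(m|n)}(i,j)$ will appear when combining the $-\sum_j (t_j/t_0)\otimes e_{aj}$ part of $\varphi(e_{i0})$ against the $t_i\partial_j$ parts coming from lower $r$'s; keeping precise track of the signs coming from parities $|i|,|j|$ and from supercommutation past $t_0^{\pm1}$ is the crux.

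The main obstacle I expect is sign management: every term carries Koszul signs from the super tensor product $\mcD'(m|n)\otimes U(\gl(m|n))$, from supercommutation in $\mcF$, and from the parity-twisted matrix entries $\E_{ij}=(-1)^{|i|}e_{ij}$, and these must all line up for the four formulas to be internally consistent across the $i=0$ versus $i\in I$ split. A secondary subtlety is the appearance of $\partial_i t_j$ (not $t_j\partial_i$) in $B(s)$: one must be careful that when $i=j$ the relation $\partial_a t_a = 1 + (-1)^{|a|}t_a\partial_a$ (Lemma~\ref{lem: identities for D'(m|n)}(2)) introduces the extra $1$ that ultimately produces the $R^k$ and $A_k(1)(t_0\partial_0\otimes 1)$ terms in the $(0,0)$ case, and the $A_k(1)(t_a\partial_0\otimes 1)$ term in the $(a,0)$ case. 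Once the recursions for $A_k, f_s, B, D_k, E_k, F_k$ are verified, the inductive step is a matter of assembling these pieces in the correct order; I would organize the write-up by first stating the auxiliary recursions, then doing the four-way case analysis ($b\in I$ with the contribution split as $i\in I$ and $i=0$; then $b=0$ likewise), reducing each to the previously verified identities. Finally I would remark that summing $\varphi(r_k^{\gl(m+1|n)}(a,b))$ over the diagonal $a=b\in\hat I$ recovers Theorem~\ref{thm : gelfand}, which is the point of the appendix.
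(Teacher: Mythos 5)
Your plan is correct and coincides with the paper's own proof: a simultaneous induction on $k$ over all four formulas, using the recursion $r_{k+1}^{\gl(m+1|n)}(a,b)=\sum_{i\in\hat I}(-1)^{|i|}r_k^{\gl(m+1|n)}(a,i)e_{ib}$ with the $i=0$ term split off, the explicit images of the generators under $\varphi$, the identities $\sum_{i\in I}(-1)^{|i|}f_s(a,i)(1\otimes e_{ib})=f_{s+1}(a,b)$ and $\sum_{i\in I}(-1)^{|i|}r_{k-g}^{\gl(m|n)}(a,i)e_{ib}=r_{k+1-g}^{\gl(m|n)}(a,b)$, the relation $\partial_it_i=1+(-1)^{|i|}t_i\partial_i$ producing $R_2$, and the Pascal recursion assembling $A_{k+1}(s)$. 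The only nit is that the $R^k$ term in the $(0,0)$ formula propagates from the $R$ summand of $\varphi(e_{00})$ rather than from the $\partial_at_a$ relation, but this does not affect the soundness of the plan.
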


\begin{proof}
    We prove all four statements simultaneously by induction on $k$. The base case $k = 1$ follows from the definition of $\varphi$. Suppose the formulas in the statement of the Theorem are true for some positive integer $k$. Let us prove them for $k + 1$.

    First, consider the value of $\varphi(r_k^{\gl(m+1|n)}(a, b))$:
    \begin{align*}
        \varphi(r_{k+1}^{\gl(m|n)}(a,b)) &= \varphi(r_k^{\gl(m+1|n)}(a,0))\varphi(e_{0b}) +\sum_{i\in I}(-1)^{|i|}\varphi(r_{k}^{\gl(m|n)}(a,i))\varphi(e_{ib})\\
        &= \left(A_k(1)(t_a\partial_0\otimes 1) - E_k(a) - \left(\frac{t_a}{t_0}\otimes 1\right)\sum_{s=2}^k A_k(s)B(s)\right)(t_0\partial_b\otimes 1)\\
        &\phantom{=}+\sum_{i\in I}(-1)^{|i|}(F_k(a,i)+D_k(a,i))(t_i\partial_b\otimes 1 + 1\otimes e_{ib} + \delta_{ib}(-1)^{|i||b|}R)\\
        &= A_k(1)(t_a\partial_0 t_0\partial_b\otimes 1) \\
        &\phantom{=}- \sum_{g=0}^{k-1}\left(\binom{k}{g}R^g\sum_{i\in I}(-1)^{(|a|+|b|)|i| + |a||b|}t_i\partial_b\otimes r_{k-g}^{\gl(m|n)}(a,i)\right)\\
        &\phantom{=}-\left(\frac{t_a}{t_0}\otimes 1\right)\sum_{s=2}^kA_k(s)B(s)(t_0\partial_b\otimes 1)+ \sum_{i\in I}(-1)^{|i|}\sum_{s=1}^k f_s(a,i)(t_i\partial_b\otimes 1)A_k(s)\\
        &\phantom{=}+ \sum_{i\in I}\sum_{g=0}^k \binom{k}{g}R^g (-1)^{(|a|+|b|)|i| + |a||b|}\left(t_i\partial_b\otimes r_{k-g}^{\gl(m|n)}(a,i)\right)\\
        &\phantom{=}+ \sum_{i\in I}\sum_{s=1}^k\left((-1)^{|i|}f_s(a,i)(1\otimes e_{ib})A_k(s)\right) \\ 
        &\phantom{=}+\sum_{i\in I}\sum_{g=0}^k\binom{k}{g}R^g (1\otimes (-1)^{|i|}r_{k-g}^{\gl(m|n)}(a,i)e_{ib})\\
        &\phantom{=}+\sum_{s=1}^k f_s(a,b)A_k(s)R + \sum_{g=0}^k\binom{k}{g}R^{g+1}\left(1\otimes r_{k-g}^{\gl(m|n)}(a,b)\right).
    \end{align*}
    Write this sum as $X_1 - X_2 - X_3 + X_4 + X_5 + X_6 + X_7 + X_8 + X_9$. Then 
    \begin{align*}
        X_5 - X_2 = R^k(t_a\partial_b\otimes 1).
    \end{align*}
    We have
    \begin{align*}
        X_3 &= \left(\frac{t_a}{t_0}\otimes 1\right)\sum_{s=2}^kA_k(s)B(s)(t_0\partial_b\otimes 1)\\
        &= \sum_{s=2}^k \sum_{i,j\in I}(-1)^{|i|(1+|j|)}\frac{t_a}{t_0}\partial_i t_j \otimes r_{s-1}^{\gl(m|n)}(i,j)\otimes (t_0\partial_b\otimes 1)A_k(s)\\
        &=\sum_{s=2}^k \sum_{i,j\in I} (-1)^{|i|+|i||j| + |i||b| + |j||b|}t_a\partial_i t_j \partial_b\otimes r_{s-1}^{\gl(m|n)}(i,j)A_k(s)
    \end{align*}
    and
    \begin{align*}
        X_4 &= \sum_{i\in I}(-1)^{|i|}\sum_{s=1}^k f_s(a,i)(t_i\partial_b\otimes 1)A_k(s)\\
        &=\sum_{s=1}^k \sum_{i,j\in I}(-1)^{|i|+|j|}t_a\partial_j \otimes r_{s-1}^{\gl(m|n)}(j,i)\otimes (t_i\partial_b\otimes 1)A_k(s)\\
        &=\sum_{s=1}^k \sum_{i,j\in I}(-1)^{|j| + |j||i| + |j||b| + |i||b|}t_a\partial_j t_i \partial_b\otimes r_{s-1}^{\gl(m|n)}(j,i)A_k(s)\\
        &= \sum_{s=1}^k \sum_{i,j\in I} (-1)^{|i|+|i||j| + |i||b| + |j||b|}t_a\partial_i t_j \partial_b\otimes r_{s-1}^{\gl(m|n)}(i,j)A_k(s).
    \end{align*}
    Thus
    \[X_4 - X_3 = \left(\sum_{i\in I}(-1)^{|i|}t_a\partial_it_i \partial_b\otimes 1\right)A_k(1) = ((t_a\partial_b \otimes 1)(R_2)-t_a\partial_0t_0\partial_b \otimes 1)A_k(1)\]
    as $\sum_{i\in \hat{I}}(-1)^{|i|}t_a\partial_it_i\partial_b\otimes 1 = (t_a\partial_b\otimes 1)R_2$:
    \begin{align*}
        \sum_{i\in \hat{I}} (-1)^{|i|}t_a\partial_i t_i \partial_b &= \sum_{i\in \hat{I}} t_a ((-1)^{|i|}+t_i\partial_i)\partial_b\\
        &= (t_a\partial_b)(m+1-n) +\sum_{\substack{i\in \hat{I}\\ i\ne b}}t_at_i\partial_i\partial_b + t_at_b\partial_b\partial_b\\
        &= (t_a\partial_b)(\mcE+ m+1-n) - (t_a\partial_b)(t_b\partial_b)+ t_at_b\partial_b\partial_b.
    \end{align*}
    If $b$ is even, then the last two terms become $-t_a\partial_b$. If $b$ is odd, the last term is zero, and it is easily verified that $t_a\partial_bt_b\partial_b = t_a\partial_b$, Either way, we get $-(t_a\partial_b)(t_b\partial_b)+t_at_b\partial_b\partial_b = -t_a\partial_b$. Plugging this back in proves the desired identity.
    Then
    \[X_1 + X_4 - X_3 = (t_a\partial_b\otimes 1)A_k(1)R_2 = (t_a\partial_b\otimes 1)\left(\sum_{g=0}^{k-1}\binom{k}{g}R^gR_2^{k-g}\right).\]
    Therefore we have
    \[X_1 + X_4 - X_3 + X_5 - X_2 = (t_a\partial_b\otimes 1)\left(\sum_{g=0}^{k}\binom{k}{g}R^g R_2^{k-g}\right).\]
    Since $\sum_{i\in I}(-1)^{|i|}f_s(a,i)e_{ib} = f_{s+1}(a,b)$, we have
    \[X_6 = \sum_{s=1}^k f_{s+1}(a,b)A_k(s) = \sum_{s= 2}^{k+1}f_s(a,b)\left(\sum_{g=0}^{k+1-s}\binom{k}{g}R^gR_2^{k+1-s-g}\right).\]
    Thus
    \[X_6 + X_1 + X_4 - X_3 + X_5 - X_2 = \sum_{s=1}^{k+1}f_s(a,b)\left(\sum_{g=0}^{k+1-s}\binom{k}{g}R^g R_2^{k+1-s-g}\right).\]
    But
    \[X_8 = \sum_{s=1}^k\left(f_s(a,b)\sum_{g=1}^{k-s+1}\binom{k}{g-1}R^gR_2^{k-s-g+1}\right).\]
    Thus
    \[X_8 +X_6 + X_1 + X_4 - X_3 + X_5 - X_2 = \sum_{s=1}^{k+1}\left(f_s(a,b)\sum_{g=0}^{k+1-s}\binom{k+1}{g}R^g R_2^{k+1-s-g}\right) = F_{k+1}(a,b).\]
    Now, since $\sum_{i\in I}(-1)^{|i|}r_{k-g}^{\gl(m|n)}(a,i)e_{ib}= r_{k+1-g}^{\gl(m|n)}(a,b)$, we find that
    \[ X_7 = \sum_{g=0}^k\binom{k}{g}R^g(1\otimes r_{k+1-g}^{\gl(m|n)}(a,b)).\]
    Also
    \[ X_9 = \sum_{g=1}^k\binom{k}{g-1}R^g(1\otimes r_{k+1-g}^{\gl(m|n)}(a,b))\]
    so
    \[ X_7+ X_9 = \sum_{g=0}^k\binom{k+1}{g}R^g(1\otimes r_{k+1-g}^{\gl(m|n)}(a,b)) = D_{k+1}(a, b).\]

    Therefore
    \[X_1 - X_2 - X_3 + X_4 + X_5 + X_6 + X_8 + (X_7 + X_9) = F_{k+1}(a, b) + D_{k+1}(a, b).\]
    This completes the proof of the inductive step for $\varphi(r_{k+1}^{\gl(m+1|n)}(a,b))$.

    Next, we consider the value of $\varphi(r_{k}^{\gl(m+1|n)}(a,0))$. We have
    \begin{align*}
        \varphi(r_{k+1}^{\gl(m+1|n)}(a,0)) &= \varphi(r_{k}^{\gl(m+1|n)}(a,0))\varphi(e_{00}) + \sum_{i\in I}(-1)^{|i|}\varphi(r_{k}^{\gl(m+1|n)}(a,i))\varphi(e_{i0})\\
        &= \left(A_k(1)(t_a\partial_0\otimes 1)-E_k(a)- \left(\frac{t_a}{t_0}\otimes 1\right)\sum_{s=2}^{k}A_k(s)B(s)\right)(t_0\partial_0\otimes 1 + R)\\
        &\phantom{=}+\sum_{i\in I}(-1)^{|i|}(F_k(a,i)+D_k(a,i))\left(t_i\partial_0\otimes 1 - \sum_{j\in I}(-1)^{|i||j|}\frac{t_j}{t_0}\otimes e_{ij}\right)\\
        &= A_k(1)(t_a\partial_0t_0\partial_0\otimes 1)-\sum_{g=0}^{k-1}\left(\binom{k}{g}\sum_{j\in I}(-1)^{|a||j|}t_j\partial_0\otimes r_{k-g}^{\gl(m|n)}(a,j)R^g\right)\\
        &\phantom{=} - \sum_{s=2}^k A_k(s) \left(\sum_{i,j\in I}(-1)^{|i|(1+|j|)}t_a\partial_i t_j \partial_0\otimes r_{s-1}^{\gl(m|n)}(i,j)\right)\\
        &\phantom{=}+A_k(1)R(t_a\partial_0\otimes 1) - \sum_{g=0}^{k-1}\left(\binom{k}{g}R^{g+1}\sum_{j\in I} (-1)^{|a||j|}\frac{t_j}{t_0}\otimes r_{k-g}^{\gl(m|n)}(a,j)\right)\\
        &\phantom{=}-\left(\frac{t_a}{t_0}\otimes 1\right)\sum_{s=2}^kA_k(s)RB(s)\\
        &\phantom{=}+\sum_{s=1}^k\left(\sum_{i\in I}(-1)^{|i|}f_s(a,i)(t_i\partial_0\otimes 1)\right)A_k(s)\\
        &\phantom{=}+ \sum_{g=0}^k\binom{k}{g}R^g \left(\sum_{i\in I}(-1)^{|a||i|}t_i\partial_0\otimes r_{k-g}^{\gl(m|n)}(a,i)\right)\\
        &\phantom{=}-\sum_{s=1}^k\left(\sum_{i,j\in I}(-1)^{|i|(1+|j|)}f_s(a,i) \left(\frac{t_j}{t_0}\otimes e_{ij}\right)\right)A_k(s)\\
        &\phantom{=}-\sum_{g=0}^k \binom{k}{g}R^g\left(\sum_{i,j\in I}(-1)^{|a||j|}\frac{t_j}{t_0}\otimes (-1)^{|i|}r^{\gl(m|n)}_{k-g}(a,i)e_{ij}\right).
    \end{align*}
    Call this sum $X_1 - X_2 - X_3 + X_4 - X_5 - X_6 + X_7 + X_8 - X_9 - X_{10}$. We have 
    \[X_8- X_2 = R^k (-1)^{|a|}t_a\partial_0 \otimes r_0^{\gl(m|n)}(a,a) = R^k(t_a\partial_0 \otimes 1).\]
    Also
    \begin{align*}
        X_5 + X_{10} &= \sum_{g=1}^k \binom{k}{g-1}R^g\left(\sum_{j\in I} (-1)^{|a||j|} \frac{t_j}{t_0}\otimes r_{k+1-g}^{\gl(m|n)}(a,j)\right)\\
        &\phantom{=}+\sum_{g=0}^k \binom{k}{g}R^g\left(\sum_{j\in I} (-1)^{|a||j|} \frac{t_j}{t_0}\otimes r_{k+1-g}^{\gl(m|n)}(a,j)\right)\\
        &= \sum_{g=0}^k \binom{k+1}{g}R^g\left(\sum_{j\in I} (-1)^{|a||j|} \frac{t_j}{t_0}\otimes r_{k+1-g}^{\gl(m|n)}(a,j)\right)\\
        &= E_{k+1}(a).
    \end{align*}
    Next, 
    \[X_9 = \sum_{s=1}^k\left(\sum_{i,j\in I}(-1)^{|i|(1+|j|)}f_s(a,i) \left(\frac{t_j}{t_0}\otimes e_{ij}\right)\right)A_k(s).\]
    For any positive integer $s$, 
    \begin{align*}
        &\sum_{i,j\in I}(-1)^{|i|(1+|j|)} f_s(a,i)\left(\frac{t_j}{t_0}\otimes e_{ij}\right)\\ =& \sum_{i,j\in I}(-1)^{|i|(1+|j|)}\left(\sum_{\ell\in I} (-1)^{|\ell|} t_a\partial_{\ell} \otimes r_{s-1}^{\gl(m|n)}(\ell, i)\right)\left(\frac{t_j}{t_0}\otimes e_{ij}\right)\\
        =& \sum_{j,\ell\in I} (-1)^{|\ell|(1+|j|)} t_a\partial_{\ell}t_jt_0^{-1}\otimes\left(\sum_{i\in I}(-1)^{|i|}r_{s-1}^{\gl(m|n)}(\ell, i) e_{ij}\right)\\
        =& \left(\frac{t_a}{t_0}\otimes 1\right)\sum_{j,\ell\in I}(-1)^{|\ell|(1+|j|)}\partial_\ell t_j\otimes r_s^{\gl(m, n)}(\ell, j)\\
        =&  \left(\frac{t_a}{t_0}\otimes 1\right)B(s+1).
        \end{align*}
    Then
    \[X_9 =  \left(\frac{t_a}{t_0}\otimes 1\right) \sum_{s=2}^{k+1}B(s)\left(\sum_{g=0}^{k+1-s}\binom{k}{g}R^g R_2^{k+1-s-g}\right).\]
    Also
    \[X_6 =  \left(\frac{t_a}{t_0}\otimes 1\right) \sum_{s=2}^{k}B(s)\left(\sum_{g=1}^{k+1-s}\binom{k}{g-1}R^g R_2^{k+1-s-g}\right).\]
    Adding, we get
    \[X_6 + X_9 =   \left(\frac{t_a}{t_0}\otimes 1\right) \sum_{s=2}^{k}A_{k+1}(s)B(s).\]

    Next, for any positive integer $s$, 
    \begin{align*}
        \sum_{i\in I} (-1)^{|i|}f_s(a,i)(t_i\partial_0\otimes 1) &= \sum_{i,j\in I}(-1)^{|i||j|}t_a\partial_j t_i\partial_0\otimes r_{s-1}^{\gl(m|n)}(j,i)\\
        &=\sum_{i,j\in I}(-1)^{|i||j|}t_a\partial_it_j\partial_0\otimes r_{s-1}^{\gl(m|n)}(i,j).
    \end{align*}
    Thus
    \[X_7 = \sum_{s=1}^k\left(\sum_{i,j\in I}(-1)^{|i||j|}t_a\partial_it_j\partial_0\otimes r_{s-1}^{\gl(m|n)}(i,j)\right)A_k(s).\]
    On the other hand,
    \[X_3 = \sum_{s=2}^k\left(\sum_{i,j\in I}(-1)^{|i||j|}t_a\partial_it_j\partial_0\otimes r_{s-1}^{\gl(m|n)}(i,j)\right)A_k(s)\]
    so
    \[X_7 - X_3 = A_k(1)(t_a\partial_0\otimes 1)(R_2-t_0\partial_0\otimes 1).\]
    Then
    \[X_7 - X_3 + X_1 = A_k(1)R_2(t_a\partial_0\otimes 1) = \left(\sum_{g=0}^{k-1} \binom{k}{g}R^g R_2^{k-g}\right)(t_a\partial_0\otimes 1).\]
    Thus
    \[X_7 - X_3 + X_1 + X_8 - X_2 = \left(\sum_{g=0}^{k} \binom{k}{g}R^g R_2^{k-g}\right)(t_a\partial_0\otimes 1).\]
    Since
    \[X_4 = \left(\sum_{g=1}^k \binom{k}{g-1}R^g R_2^{k-g}\right)(t_a\partial_0\otimes 1),\]
    we get
    \[X_7 - X_3 + X_1 + X_8 - X_2 + X_4 = \left(\sum_{g=0}^k\binom{k+1}{g}R^g R_2^{k-g}\right)(t_a\partial_0\otimes 1) = A_{k+1}(1)(t_a\partial_0\otimes 1).\]
    Thus
    \begin{align*}
        \varphi(r_{k+1}^{\gl(m+1|n)}(a,0)) &= (X_7 - X_3 + X_1 + X_8 - X_2 + X_4) - (X_5 + X_{10}) - (X_6+X_9)\\
        &= A_{k+1}(1)(t_a\partial_0\otimes 1) - E_{k+1}(a) - \left(\frac{t_a}{t_0}\otimes 1\right)\sum_{s=2}^k A_{k+1}(s)B(s).
    \end{align*}
    This completes the inductive step for $r_{k+1}^{\gl(m|n)}(a,0)$.

    Next, we consider the value of $ \varphi(r_{k}^{\gl(m+1|n)}(0, b))$. We have
    \begin{align*}
        \varphi(r_{k+1}^{\gl(m+1|n)}(0,b)) 
        &= \varphi(r_k^{\gl(m+1|n)}(0, 0))\varphi(e_{0b}) +\sum_{i\in I}(-1)^{|i|}\varphi(r_k^{\gl(m+1|n)}(0,i))\varphi(e_{ib})\\
        &= \left(A_k(1)(t_0\partial_0\otimes 1)+R^k - \sum_{s=2}^k A_k(s)B(s)\right)(t_0\partial_b\otimes 1) \\
        &\phantom{=} + \sum_{i\in I}(-1)^{|i|} F_k(0,i)(t_i\partial_b \otimes 1 + 1\otimes e_{ib}+\delta_{ib}(-1)^{|i||b|}R)\\
        &=A_k(1)(t_0\partial_0t_0\partial_b\otimes 1) + R^k(t_0\partial_b\otimes 1) - \sum_{s=2}^{k}A_k(s)B(s)(t_0\partial_b\otimes 1) \\
        &\phantom{=}+\sum_{s=1}^k\sum_{i\in I} (-1)^{|i|}f_s(0,i)(t_i\partial_b\otimes 1)A_k(s)\\ &\phantom{=} + \sum_{s=1}^k\sum_{i\in I}(-1)^{|i|}f_s(0,i)(1\otimes e_{ib})A_k(s)
        +\sum_{s=1}^k f_s(0,b) A_k(s)R.
    \end{align*}
    Write this sum as $X_1 +X_2-X_3+X_4+X_5+X_6$. Now 
    \begin{align*}
        X_3 &= \sum_{s=2}^k \sum_{i,j\in I} (-1)^{|i|(1+|j|)}(\partial_i t_j \otimes r_{s-1}^{\gl(m|n)}(i,j))(t_0\partial_b\otimes 1)A_k(s)\\
        &= \sum_{s=2}^k \sum_{i,j\in I} (-1)^{|i|(1+|j|) + (|i|+|j|)|b|}(t_0\partial_i t_j \partial_b \otimes r_{s-1}^{\gl(m|n)}(i,j))A_k(s)\\
        &= \sum_{s=2}^k \sum_{i,j\in I} (-1)^{(|i|+|j|)(|i|+|b|)}(t_0\partial_i t_j \partial_b \otimes r_{s-1}^{\gl(m|n)}(i,j))A_k(s).
    \end{align*}
    Also 
     \begin{align*}
        X_4 &= \sum_{s=1}^k\sum_{i,j\in I}  (-1)^{|i|+|j|}(t_0\partial_j\otimes r^{\gl(m|n)}(j,i) \otimes t_i\partial_b \otimes 1)A_k(s)\\
        &=\sum_{s=1}^k\sum_{i,j\in I} (-1)^{|i|+|j|+(|i|+|j|)(|i|+|b|)} (t_0\partial_jt_i\partial_b\otimes r_{s-1}^{\gl(m|n)}(j,i) )A_k(s).
    \end{align*}
    Now $(-1)^{|i|+|j|+(|i|+|j|)(|i|+|b|)}= (-1)^{|j| + |j||i|+|i||b|+|j||b|}= (-1)^{(|j|+|i|)(|j|+|b|)}$. Then swapping the variable names $i,j$ gives
    \[X_4 = \sum_{s=1}^k\sum_{i,j\in I}(-1)^{(|i|+|j|)(|i|+|b|)}(t_0\partial_i t_j\partial_b \otimes r_{s-1}^{\gl(m|n)}(i,j))A_k(s).\]
    Then $X_4-X_3$ simplifies to 
    \[\left(\sum_{i\in I} (-1)^{|i|}t_0\partial_it_i\partial_b\otimes 1\right)A_k(1).\]
    Then $X_1+X_4-X_3$ is
    \[(t_0\partial_b\otimes 1)\sum_{g=0}^{k-1}\binom{k}{g}R^g R_2^{k-g}.\]
    Thus
    \[X_2 + X_1 + X_4 - X_3  = (t_0\partial_b\otimes 1)\left(\sum_{g=0}^k \binom{k}{g}R^gR_2^{k-g}\right).\]
    Since $\sum_{i\in I}(-1)^{|i|}f_s(0,i)(1\otimes e_{ib}) = f_{m+1}(0,i)$, we have 
    \[X_5 = \sum_{s=1}^k f_{s+1}(0,b)A_k(s) = \sum_{s=2}^{k+1} f_s(0,b)\sum_{g=0}^{k+1-s}\binom{k}{g}R^gR_2^{k+1-s-g}.\]
    Thus
    \[X_5 + X_2 + X_1 + X_4 - X_3 = \sum_{s=1}^{k+1} f_s(0,b) \sum_{g=0}^{k+1-s}\binom{k}{g}R^g R_2^{k+1-s-g}.\]
    Also
    \[X_6 = \sum_{s=1}^{k}f_s(0,b)\sum_{g=1}^{k+1-s}\binom{k}{g-1}R^gR_2^{k+1-s-g}.\]
    Adding the previous two equations gives 
    \[X_1 + X_2 - X_3 + X_4 + X_5+ X_6 = \sum_{s=1}^{k+1}f_s(0,b)A_{k+1}(s)\]
    which completes the inductive step for the value of $\varphi(r_{k+1}^{\gl(m+1|n)}(0,b))$.

    Finally, we consider the value of $\varphi(r_k^{\gl(m+1|n)}(0,0))$. We have
    \begin{align*}
        \varphi(r_{k+1}^{\gl(m+1|n)}(0,0)) &= \varphi(r_k^{\gl(m+1|n)}(0,0))\varphi(e_{00}) + \sum_{i\in I}(-1)^{|i|}\varphi(r_k^{\gl(m+1|n)}(0,i)) \varphi(e_{i,0})\\
        &=\left(A_k(1)(t_0\partial_0\otimes 1) + R^k - \sum_{s=2}^k A_k(s)B(s)\right)(t_0\partial_0 \otimes 1+R)\\
        &\phantom{=} + \sum_{i\in I}(-1)^{|i|}F_k(0,i)\left(t_i\partial_0 -\sum_{j\in I} (-1)^{|i||j|} \frac{t_j}{t_0}\otimes e_{ij}\right)\\
        &=A_k(1)(t_0\partial_0t_0\partial_0\otimes 1) + R^{k}(t_0\partial_0\otimes 1) -\sum_{s=2}^k A_k(s)B(s)(t_0\partial_0\otimes 1)\\
        &\phantom{=} + A_k(1)R (t_0\partial_0\otimes 1) + R^{k+1}-\sum_{s=2}^{k}\left(A_k(s)RB(s)\right)\\
        &\phantom{=}+\sum_{i\in I} (-1)^{|i|}\sum_{s=1}^kf_s(0,i)(t_i\partial_0\otimes 1)A_k(s)\\
        &\phantom{=}-\sum_{i\in I}(-1)^{|i|}\sum_{s=1}^k f_s(0,i)A_k(s)\sum_{j\in I}(-1)^{|i||j|}\frac{t_j}{t_0}\otimes e_{ij}.
    \end{align*}
    Write this as $X_1 + X_2 - X_3 + X_4 + X_5 - X_6 + X_7 - X_8$. 

    Since 
    \begin{align*}
        X_7 &= \sum_{s=1}^k\sum_{i,j\in I} (-1)^{|i|+|j|}(t_0\partial_j\otimes r_{s-1}^{\gl(m|n)}(j,i)\otimes t_i\partial_0\otimes 1)A_k(s)\\
        &=\sum_{s=1}^k\sum_{i,j\in I}(-1^{|j|(1+|i|)})\partial_j t_i\otimes r_{s-1}^{\gl(m|n)}(j,i) A_k(s)(t_0\partial_0\otimes 1)\\
        &=\sum_{s=1}^k\sum_{i,j\in I}(-1^{|i|(1+|j|)})\partial_i t_j\otimes r_{s-1}^{\gl(m|n)}(i,j) A_k(s)(t_0\partial_0\otimes 1)
    \end{align*}
    we have 
    \begin{align*}
        X_7-X_3 = \sum_{i\in I}(-1)^{|i|}(\partial_i t_i\otimes 1)(t_0\partial_0\otimes 1) A_k(1) = (R_2-(t_0\partial_0\otimes 1))(t_0\partial_0\otimes 1)A_k(1).
    \end{align*}
    Then, expanding out the $A_k(s)$ terms gives
    \begin{align*}
        X_7-X_3 + X_1 + X_2 = \left(\sum_{g=0}^k \binom{k}{g}R^gR_2^{k-g}\right)(t_0\partial_0\otimes 1).
    \end{align*}
    But since $X_4 = \left(\sum_{g=1}^k \binom{k}{g-1}R^g R_2^{k-g}\right)(t_0\partial_0\otimes 1)$, 
    \begin{align*}
        X_7 - X_3 + X_1 + X_2 + X_4 = \left( \sum_{g=0}^k \binom{k+1}{g}R^g R_2^{k-g}\right)(t_0\partial_0\otimes 1) = A_{k+1}(1)(t_0\partial_0\otimes 1).
    \end{align*}

    We have 
    \begin{align*}
        X_8 = \sum_{s=1}^k\left(\sum_{i,j\in I}(-1)^{|i|(1+|j|)}f_s(0,i)\frac{t_j}{t_0}\otimes e_{ij}\right)A_k(s).
    \end{align*}

    For any positive integer $s$, 
    \begin{align*}
        \sum_{i,j\in I}(-1)^{|i|(1+|j|)}f_s(0,i)\frac{t_j}{t_0}\otimes e_{ij} &= \sum_{i,j\in I}(-1)^{|i|(1+|j|)}\sum_{\ell\in I} (-1)^{|\ell|}t_0\partial_{\ell}\otimes r_{s-1}^{\gl(m|n)}(\ell, i) \otimes \frac{t_j}{t_0}\otimes e_{ij}\\ 
        &=\sum_{i,j}\sum_{\ell \in I}(-1)^{|\ell|(1+ |j|)}\partial_{\ell}t_j\otimes (-1)^{|i|}( r_{s-1}^{\gl(m|n)}(\ell, i) e_{ij})\\
        &= \sum_{j,\ell\in I}(-1)^{|\ell|(1+|j|)} \partial_{\ell}t_j\otimes r_{s}^{\gl(m|n)}(\ell, j)\\
        &=B(s+1).
    \end{align*}
    Then
    \[X_8 = \sum_{s=2}^{k+1} \left(\sum_{g=0}^{k+1-s}\binom{k}{g}R^g R_2^{k+1-s-g}\right)B(s).\]
    Also,
    \[X_6 = \sum_{s=2}^{k} \left(\sum_{g=1}^{k+1-s}\binom{k}{g-1}R^g R_2^{k+1-s-g}\right)B(s)\]
    so
    \[X_6 + X_8 = \sum_{s=2}^{k+1}A_{k+1}(s) B(s).\]
    Finally, $X_5 = R^{k+1}$, so combining the above gives
    \[X_1 + X_2 - X_3 + X_4 + X_5 - X_6 + X_7 - X_8 = A_{k+1}(1)(t_0\partial_0\otimes 1) + R^{k+1} - \sum_{s=2}^{k+1}A_{k+1}(s)B(s).\]
    This completes the inductive step for $\varphi(r_{k+1}^{\gl(m+1|n)}(0,0))$, and hence the proof of the theorem.
\end{proof}

\date{\today}
\end{document}